\numberwithin{equation}{section}
  \newcommand{\R}{\mathbb{R}} 
  \newcommand{\N}{\mathbb{N}} 
  \newcommand{\st}{:\,} 
  \DeclareMathOperator{\rank}{rank}
  \newcommand{\eg}{{e.g.}}  
\newcommand{\wtilde}{\widetilde}
  \newcommand{\ceil}[1]{\lceil{#1}\rceil}
  \newcommand{\floor}[1]{\lfloor{#1}\rfloor}
  \DeclareMathOperator{\Tr}{\mathsf{Tr}}
\newcommand{\beq}{\begin{equation}}
\newcommand{\eeq}{\end{equation}}
\newcommand{\beqn}{\begin{equation*}}
\newcommand{\eeqn}{\end{equation*}}
\newcommand{\beqr}{\begin{eqnarray}}
\newcommand{\eeqr}{\end{eqnarray}}
\newcommand{\beqrn}{\begin{eqnarray*}}
\newcommand{\eeqrn}{\end{eqnarray*}}
\newcommand{\bmline}{\begin{multline}}
\newcommand{\emline}{\end{multline}}
\newcommand{\bmlinen}{\begin{multline*}}
\newcommand{\emlinen}{\end{multline*}}
\newtheorem{defin}{Definition}[section]
\newtheorem{definition}[defin]{Definition}
\newtheorem{proposition}[defin]{Proposition}
\newtheorem{theorem}[defin]{Theorem}
\newtheorem{remark}[defin]{Remark}
\newtheorem{corollary}[defin]{Corollary}
\newtheorem{lemma}[defin]{Lemma}
\newtheorem{example}[defin]{Example}
\newtheorem{conjecture}[defin]{Conjecture}
\newtheorem*{rep@theorem}{\rep@title}
\newcommand{\newreptheorem}[2]{%
\newenvironment{rep#1}[1]{%
 \def\rep@title{#2 \ref{##1}}%
 \begin{rep@theorem}}%
 {\end{rep@theorem}}}
\def\namedlabel#1#2{\begingroup
    #2%
    \def\@currentlabel{#2}%
    \phantomsection\label{#1}\endgroup
}
\newcommand{\qindep}{\alpha^{\star}} 
\newcommand{\qchrom}{\chi^{\star}} 
\newcommand{\cgpsd}{\mathcal{CS}_{+}}
\newcommand{\cpsd}{\mathcal{S}_{+}}
\newcommand{\dnn}{\mathcal{DNN}}
\newcommand{\cp}{\mathcal{CP}}
\newcommand{\ignore}[1]{}
\newcommand{\Sym}{\text{\rm Sym}}
\newcommand{\M}{{\mathcal M}}
\newcommand{\ncP}{\mathbb{R}\langle \underline{X} \rangle}
\newcommand{\Qnc}{Q_{\text{\rm nc}}}
\newcommand{\Bnc}{B_{\text{\rm nc}}}
\newcommand{\K}{{\mathcal K}}
\newcommand{\cop}{{{\mathcal {COP}}}}
\newcommand{\Knceps}{{\mathcal K}_{\text{\rm nc},\epsilon}}
\newcommand{\Kc}{{\mathcal K}_{\text{\rm c}}}
\newcommand{\cl}{\text{\rm cl}}
\newcommand{\trMcube}{\text{\rm tr}\M^{\text{\rm cube}}_{\text{\rm nc}}}
\newcommand{\trMball}{\text{\rm tr}\M^{\text{\rm ball}}_{\text{\rm nc}}}
\begin{document}
\title{Conic approach to quantum graph parameters using linear optimization over the completely positive semidefinite cone}

\author[1,2]{Monique Laurent \thanks{monique@cwi.nl}}
\author[1]{Teresa Piovesan \thanks{piovesan@cwi.nl}}

\affil[1]{Centrum Wiskunde \& Informatica (CWI), Amsterdam, The Netherlands}
\affil[2]{Tilburg University, Tilburg, The Netherlands}

\date{\today}

\small

\maketitle

\begin{abstract}
We investigate the completely positive semidefinite  cone $\cgpsd^n$, a new matrix cone consisting of all $n\times n$ matrices that admit a Gram representation by positive semidefinite matrices (of any size). In particular, we  study  relationships  between this cone and the   completely positive and the doubly nonnegative cone, and between its dual 
cone  and  trace positive non-commutative polynomials. 

We use this new cone  to model quantum analogues of the classical independence and chromatic graph parameters $\alpha(G)$ and $\chi(G)$, which are roughly obtained by allowing variables to be positive semidefinite matrices instead of $0/1$ scalars in the programs defining the classical parameters. We can formulate  these quantum parameters as conic linear programs over the cone $\cgpsd^n$. Using this conic approach we can recover the bounds in terms of the theta number and define further approximations by exploiting the link to trace positive polynomials. 
 \\

\textbf{Keywords:} Quantum graph parameters, Trace positive polynomials, Copositive cone, Chromatic number, Quantum Entanglement, Nonlocal games.
\end{abstract}

\section{Introduction}
\subsection{General overview}

Computing the minimum number $\chi(G)$ of colors needed to properly color a graph $G$ and computing the maximum cardinality $\alpha (G)$ of an independent set of vertices in $G$ are two well studied NP-hard problems in combinatorial optimization.
Recently, some analogues of these classical graph parameters have been investigated, namely the parameters $\alpha_q(G)$ and $\chi_q(G)$  in the context of  quantum entanglement in nonlocal games and the parameters $\qindep(G)$ and $\qchrom(G)$ in the context of 
quantum information.  In a nutshell, while the classical parameters can be defined as the optimal values of  integer  programming problems involving $0/1$-valued variables, their quantum analogues are obtained by allowing the  variables to be positive semidefinite matrices (of arbitrary size).

\medskip
To make this precise and simplify our discussion we now focus on the quantum chromatic number $\chi_q(G)$ 
(introduced in \cite{Cameron:2007}).
Given  a graph $G=(V,E)$ and an  integer $t\ge 1$, consider the  following conditions in the variables $x^i_u$ (for $u\in V$ and $i\in [t]=\{1,\ldots,t\}$):
\begin{equation}\label{eq0}
\begin{aligned}
& \sum_{i\in [t]} x^i_u=1 \;\; \forall u\in V,\ \ \ \  x^i_ux^j_u =0 \;\;    \forall u\in V \text{ and } \forall i\ne j\in [t], \\
& x^i_ux^i_v =0 \;\; \forall \{u,v\} \in E \text{ and } \forall i\in [t]. 
\end{aligned}
\end{equation}
If the variables are 0/1-valued then these conditions  are encoding the fact  that each vertex of $G$ receives just one out of $t$ possible colors and that two adjacent vertices must receive distinct colors.
Then the chromatic number $\chi(G)$ is equal to the smallest integer $t$ for which the system (\ref{eq0}) admits a $0/1$-valued solution.
On the other hand, if we allow the variables $x^i_u$ to take their values in  $\mathcal S^d_+$ (the cone of $d\times d$ positive semidefinite matrices) for an arbitrary  $d\ge 1$ and, if in the first condition we let 1 denote the identity matrix,
 then the smallest integer $t$ for which the system 
(\ref{eq0}) is feasible defines the quantum parameter $\chi_q(G)$. 
By construction, $ \chi(G) \ge \chi_q(G).$

It is well known that computing the chromatic number $\chi(G)$ is an NP-hard problem and   recently this hardness result has been extended to the quantum chromatic number $\chi_q(G)$ \cite{Ji:2013}. Therefore it is of interest to be able to compute  good approximations for these parameters.
In the classical case, several converging hierarchies of approximations have been proposed for $\chi(G)$ based on semidefinite programming 
(see \cite{Dukanovic:2010,Gvozdenovic:2008}).
They refine the well known bounds based on the theta number of Lov\'asz \cite{Lovasz:1979} and its strengthening by Szegedy \cite{Szegedy:1994}: 
$\chi(G)\ge \vartheta^+(\overline G)\ge \vartheta(\overline G)$.
It was shown recently in \cite{Roberson:2012} that the theta number also bounds  
the quantum chromatic number:
$$\chi(G) \ge \chi_q(G)\ge \vartheta^+(\overline G)\ge \vartheta(\overline G).$$

This raises naturally the question of constructing further semidefinite programming based bounds for $\chi_q(G)$, strengthening the theta number. 
The parameter $\chi_q(G)$ derives from a specific nonlocal game and the general problem of finding approximations to the quantum value of any nonlocal game is a very interesting and difficult one.
Positive results in this direction have been achieved in \cite{Doherty:2008,Navascues:2008} where the authors introduced semidefinite  hierarchies converging to a relaxation of the quantum value of the game. 
Whether these hierarchies converge  
to the quantum value itself 
is an open problem in mathematical physics, commonly known as Tsirelson's problem.

Here we take a different approach exploiting the particular structure of the quantum graph parameters considered.
The main idea is to reformulate the quantum graph parameter $\chi_q(G)$ as a conic optimization problem over a new matrix cone, the cone $\cgpsd$, that we call the {\em completely positive semidefinite cone}. 
The study of this matrix cone and its use for building approximations is the main contribution of this paper which we explain below in  more detail.

\medskip
Recall that  a  matrix $A\in \mathcal S^n$ is positive semidefinite (psd), i.e.,  $A\in \mathcal S^n_+$, precisely when $A$ admits a Gram representation by {\em  vectors} $x_1,\ldots,x_n\in \R^d$ (for some $d\ge 1$), which means that  $A=(\langle x_i,x_j\rangle )_{i,j=1}^n$.
Moreover, $A$ is {\em completely positive} when it admits such a Gram representation by {\em nonnegative vectors}.
We now call $A$ {\em completely positive semidefinite} when $A$ admits a Gram representation by {\em positive semidefinite matrices}
$x_1,\ldots,x_n\in \mathcal S^d_+$ for some $d\ge 1$ {(where $\langle x_i,x_j\rangle$ denotes the usual trace inner product)}.
We let $\cp^n$ and $\cgpsd^n$ denote, respectively,  the sets of completely positive and completely psd matrices.

The set $\cgpsd^n$ is  a convex cone (Lemma \ref{lemconvex}), but it is not known whether it is   closed. 
A related open question is whether 
any matrix $A$ which admits a Gram representation by {\em infinite} positive semidefinite matrices also admits such a Gram representation by finite ones (see Theorem~\ref{theoclosed}).

It is easy to see that the new cone $\cgpsd^n$ is nested between $\cp^n$ and the {\em doubly nonnegative} cone $\dnn^n$ (consisting of all matrices that are 
both psd and nonnegative): 
$$\cp^n\subseteq \cgpsd^n \subseteq \cl(\cgpsd^n) \subseteq \dnn^n.$$
In what follows we may omit the superscript and write $\cp, \cgpsd, \dnn$  when we do not need to explicitly mention the size $n$ of the matrices.
As is well known,  $\dnn^n=\cp^n$ for $n\le 4$ and strict inclusion holds for $n\ge 5$ \cite{Diananda:1962,Maxfield:1962}.
Frenkel and Weiner \cite{Frenkel:2014} give an example of a $5 \times 5$ matrix which is doubly nonnegative but not completely psd and the authors of  \cite{Fawzi:2014}  give an example of a $5\times 5$ matrix which is completely psd but not completely positive.
We will show that the above mentioned matrix studied by Frenkel and Weiner 
 does not lie in the closure of $\cgpsd^5$,  thus $\cl(\cgpsd^{5}) \subsetneq \dnn^{5}$. Moreover we will present a class of matrices that are doubly nonnegative but not completely psd (Lemma \ref{lemclass}). An ingredient for this is showing that for matrices supported by a cycle, being completely positive is equivalent to being completely psd (Theorem~\ref{lem:oddcycle}).

\medskip
Using the completely psd  cone $\cgpsd$ we  can reformulate the quantum chromatic number $\chi_q(G)$  as a linear optimization problem over affine sections of the cone $\cgpsd$. 
Notice that such a reformulation, combined with the above mentioned result about the NP-hardness of the quantum chromatic number, implies that 
linear optimization over affine sections of the completely positive semidefinite cone is also an NP-hard problem.
The idea for reformulating $\chi_q(G)$ is simple and goes as follows:
linearize the system (\ref{eq0}) by introducing a matrix $X$ (defined as the Gram matrix of the psd matrices $x^i_u$), add the condition  $X\in \cgpsd$, and  replace the conditions in (\ref{eq0}) by linear conditions on $X$
(see  Sections \ref{secconealpha}-\ref{secPsi} for details). In this way the whole complexity of the problem is pushed to the cone $\cgpsd$.

By replacing in the conic linear  program defining $\chi_q(G)$ the cone $\cgpsd$ by its closure $\mathrm{cl}(\cgpsd)$, we obtain a new parameter $\wtilde \chi_q(G)$, which satisfies $\chi_q(G) \ge \wtilde \chi_q(G)$. This new parameter $\wtilde \chi_q(G)$ can be equivalently formulated in terms of the dual conic program, since strong duality holds (while we do not know if this is the case for the program defining $\chi_q(G)$). The dual conic program is over the dual cone $\cgpsd^*$. As we explain below, $\cgpsd^*$ can be interpreted in terms of trace positive polynomials, which naturally opens the way to semidefinite programming based approximations.

\medskip
The dual cone $\cgpsd^{n*}$ of the completely psd cone $\cgpsd^n$ has a useful  interpretation in terms of {\em trace positive non-commutative polynomials.}
A polynomial $p$ is {\em trace positive} if one gets a nonnegative value when evaluating $p$ at arbitrary  matrices $X_1,\ldots,X_n\in \mathcal S^d$ (for any $d\ge 1$) and taking the trace of the resulting matrix.
For a  matrix $M\in \mathcal S^n$, consider the following polynomial $p_M= \sum_{i,j=1}^n M_{ij} X_i^2 X_j^2$ in the non-commutative variables $X_1,\ldots,X_n$.
In Section~\ref{secdual}, we show that 
 $M$ belongs to the dual cone $\cgpsd^{n*}$ precisely when $p_M$ is trace positive.
When restricting to scalar (commutative) variables we find the notion of copositive matrices and the fact that  $\cgpsd^{n*}$ is contained in the copositive cone $\cop^n$ (the dual of the completely positive cone $\cp^n$).

Trace positive polynomials have been  studied in the recent years, in particular in \cite{Burgdorf:2011,CKP12}. 
A sufficient condition for trace positivity of a polynomial $p$  is that $p$ belongs to the {\em tracial quadratic module} $\trMball$ (of the ball).
This means that
$p$ can be written as a sum of commutators $[g,h]=gh-hg$, Hermitian squares $gg^*$, and terms of the form $g(1-\sum_{i=1}^n X_i^2)g^*$,
where $g,h$ are non-commutative polynomials and $^*$ is the involution that  reverses the order of the variables in each monomial. 

It is shown in \cite{Klep:2008,Burgdorf:2013,Burgdorf:2011} that a celebrated conjecture of Connes in operator algebra is equivalent to showing that, for  any non-commutative polynomial $p$ which is trace positive,  the polynomial $p+\epsilon$ belongs to $\trMball $ for all $\epsilon>0$.
This motivates our definition of the convex set $\Knceps$, which consists  of all matrices $M$ for which the perturbed polynomial $p_M+\epsilon$ belongs to $\trMball$. 
Then, 
we have the inclusion $\bigcap_{\epsilon>0} \Knceps \subseteq \cgpsd^*$, 
 with equality if Connes' conjecture holds (Lemma~\ref{lembasic}).

Using these sets $\Knceps$, we can define the parameters $\Psi_\epsilon(G)$. Namely, $\Psi_\epsilon(G)$ is  obtained by considering   the (dual) optimization program over $\cgpsd^*$ which defines $\wtilde \chi_q(G)$ and replacing  in it the cone $\cgpsd^*$ by the convex set $\Knceps$ (see  Definition \ref{def:psi}).
Note that
 each parameter $\Psi_\epsilon(G)$ can be obtained as the limit of  a converging hierarchy of semidefinite programs obtained by introducing degree constraints on the terms in the tracial quadratic module $\trMball$ (see Section \ref{secPsi}).
We can show that $\Psi_\epsilon(G)$ relates to the theta number: $\Psi_\epsilon(G)\ge \vartheta^+(\overline G)$ (see Lemma~\ref{lem:Psi-epsilon}).
Unfortunately, as there is no apparent inclusion relationship between $\cgpsd^*$ and $\Knceps$, we do not know how $\Psi_\epsilon(G)$ compares to $\wtilde \chi_q(G)$ (and even less so to $\chi_q(G)$). However, if Connes' conjecture holds, then we have that $\wtilde \chi_q(G)\le \inf_{\epsilon >0} \Psi_\epsilon(G)$. 

Hence devising converging semidefinite approximations for the quantum chromatic number seems much harder than for its classical counterpart and 
 our results can be seen as a first step in this direction. 
 This difficulty should be put in the broader context of the general difficulty of approximating the quantum value of nonlocal games as mentioned earlier.

\medskip
Our main motivation for studying the cone $\cgpsd$ comes from its relevance to the quantum graph parameters.
Recent works have shown that the cone $\cgpsd$ can also be used to study the set of quantum bipartite correlations \cite{MR:2014} and the quantum value of any two-party game \cite{SV:2015}. 
Moreover, there is a further connection of this cone to  the widely studied notion  of factorizations of nonnegative matrices.
Given a nonnegative  $m\times n$ matrix $M$,  a {\em nonnegative factorization} (resp., a {\em psd factorization}) of $M$ consists  of nonnegative vectors 
$x_i,y_j\in \R_{+}^d$ (resp., psd matrices $x_i,y_j\in \mathcal S^d_+$) (for some $d\ge 1$) such that $M=(\langle x_i,y_j\rangle)_{i\in [m],j\in [n]}$.
Note that  asymmetric  factorizations are allowed, using $x_i$ for the rows and $y_j$ for the columns of $M$.
In this asymmetric setting, the question is not about the {\em existence} of a factorization (since such a factorization always exists in some dimension $d$), but about the {\em smallest possible dimension $d$} of such a factorization.
There is recently a surge of interest in these questions, motivated by the relevance of nonnegative factorizations (resp., psd factorizations) to  linear (resp., semidefinite) extended formulations of polytopes, see  e.g. \cite{Fiorini,GPT} and further references therein.

\subsection{Organization of the paper}

The paper is organized as follows.
In the rest of the introduction we present some notation and preliminaries about graphs and matrices used throughout.
 
 Section \ref{secparameters} introduces all graph parameters considered in the paper. Section \ref{secclassical} recalls   the classical parameters $\alpha(G)$, $\chi(G)$,  the theta numbers $\vartheta(G),$ $\vartheta'(G)$ and $\vartheta^+(G)$, and   two conic variants 
$\vartheta^\K(G)$ and $\Theta^\K(G)$ where $\K$ is a  cone nested between $\cp$ and $\dnn$.
Section~\ref{secquantum} introduces  the quantum graph parameters $\alpha_q(G),\qindep(G),\chi_q(G)$, $\qchrom(G)$ and, in Section \ref{secmotivation}, 
we briefly motivate the use of  these parameters for analyzing the impact of quantum entanglement in nonlocal games and in quantum information.

Section \ref{seccone} is devoted to the study of the new cone $\cgpsd$.   We discuss its basic properties (Section~\ref{secbasic}), the  links with $\cp$ and $\dnn$
(Section  \ref{seclink}),
  the  dual cone $\cgpsd^*$ and its link to trace positive polynomials (Section \ref{secdual}), and the convex sets $\Knceps$ (Section \ref{secapprsets}).

 Section \ref{secconicopt} shows how to reformulate   the quantum graph parameters using  linear  optimization  over affine sections of the cone $\cgpsd$.
First,  we reformulate  the quantum parameters as  checking  feasibility of a sequence of conic programs over sections of $\cgpsd$; this  is done  in Section \ref{secconealpha} for the quantum stability numbers and in Section \ref{secconechi} for the quantum chromatic numbers. 
We also show there how to recover the known bounds for the quantum graph parameters in terms of the theta number by replacing the cone $\cgpsd$ by the doubly nonnegative cone and establish new bounds given by the parameters $\vartheta^{\cgpsd}(G)$ and $\Theta^{\cgpsd}(G)$.  In Section \ref{secPsi}, we build a single `aggregated' optimization  program permitting to express the quantum parameter $\chi_q(G)$. 
The conic dual of this program is then used to define the parameter $\Psi_\epsilon(G)$, which is obtained by replacing the cone $\cgpsd^*$ by the convex set $\Knceps$  in this program.

Section \ref{secfinal} gives some concluding remarks and closes the paper.

\subsection{Notation and preliminaries}
\noindent
{\em Graphs.} Throughout,  all graphs are assumed to be finite, undirected and without loops. 
A graph $G$ has vertex set $V(G)$ and edge set $E(G)$. 
Given two vertices  $u,v\in V(G)$,  we write $u \simeq v$ if $u,v$ are adjacent or equal and we write $u\sim v$ when $u$ and $v$ are adjacent, in which case the corresponding edge is denoted as $\{u,v\}$ or simply as $uv$.
$\overline{G}$ is the complementary graph of $G$, with vertex set $V(G)$ and two distinct vertices are adjacent in $\overline{G}$  if and only if they are not adjacent in $G$. 

A stable set of $G$  is a subset of $V(G)$ where any two vertices are not adjacent. The {\em stability number} $\alpha(G)$ is the maximum cardinality of a stable set in $G$. A clique of $G$ is a set of vertices that are pairwise adjacent and $\omega(G)$ is the maximum cardinality of a clique; clearly, $\omega(G)=\alpha(\overline{G})$.
A {\em proper coloring} of $G$ is a coloring of the vertices of $G$ in such a way that adjacent vertices receive distinct colors. The {\em chromatic number} $\chi(G)$ is the minimum number of colors needed for a proper coloring.
Equivalently, $\chi(G)$ is the smallest number of stable sets needed to cover all vertices of $G$. The {\em fractional chromatic number} $\chi_f(G)$ is the fractional analogue, defined as the smallest value of $\sum_{h=1}^k \lambda_h$ for which there exists stable sets $S_1,\ldots,S_k$ of $G$ and nonnegative scalars $\lambda_1,\ldots,\lambda_k$ such that $\sum_{h: v\in S_h}\lambda_h =1$ for all $v\in V(G)$.
Clearly, $\omega(G)\le \chi_f(G)\le \chi(G)$.

For $t \in \N$, we set $[t]=\{1,\ldots,t\}$, $K_t$ denotes the complete graph on $[t]$ and $C_{n}$ the $n$-cycle.
The graph $G \square K_t$ is the {\em Cartesian product} of  $G$ and $K_t$. Its vertex set is $V(G)\times [t]$ and two vertices $(u,i)$ and $(v,j)$ are adjacent in $G\Box K_t$  if ($u=v$ and $i \ne j$) or if ($u \sim v$ and $i=j$).

\medskip\noindent
{\em Cones and matrices.} 
Throughout, $\R^n_+$ denotes the set of (entrywise) nonnegative vectors, $e_1,\ldots,e_n$ denote the standard unit vectors in $\R^n$, and $e$ denotes the all-ones vector. $\R^n$ is equipped with the standard inner product: $\langle x,y\rangle =x^Ty=\sum_{i=1}^n x_iy_i$  and the corresponding norm
$\|x\|=\sqrt{\langle x,x\rangle}$.

We denote by $\mathcal S^n$ the set of $n \times n$ real symmetric matrices, which is equipped  with the standard  trace inner product: $\langle X,Y\rangle =\Tr(XY)=\sum_{i,j=1}^n X_{ij}Y_{ij}$ and the corresponding Frobenius norm
$\|X\| = \sqrt{\langle X,X \rangle}$.
For $X\in \mathcal S^n$, $X$ is positive semidefinite (also written as   $X \succeq 0$) if all its eigenvalues are nonnegative. 
We let $\cpsd^n$ denote the set of positive semidefinite matrices in $\mathcal S^n$ and $\dnn^n$, the {\em doubly nonnegative cone}, is the set of positive semidefinite matrices in $\mathcal S^n$ with nonnegative entries. 
As it is well known, 
$X\succeq 0$ if and only if there exist vectors $x_1,\ldots,x_n\in \R^d$ (for some $d\ge 1$) such that $X_{ij}=\langle x_i,x_j\rangle$ for all $i,j\in [n]$, in which case  we say that $x_1,\ldots,x_n$ form a {\em Gram representation} of $X$ and we call $X$ the {\em Gram matrix} of $x_1,\ldots,x_n$. 
Furthermore, $X\in\mathcal S^n$ is said to be {\em completely positive} if $X$ is the Gram matrix of a set of {\em nonnegative} vectors $x_1,\ldots,x_n\in \R^d_+$ (for some $d\ge 1$). We let $\cp^n$ denote the set of completely positive matrices. 
The sets
$\mathcal S^n_+$, $\dnn^n$ and $\cp^n$ are all convex cones.

For a pair of matrices $X,Y$,  $X \oplus Y = \begin{pmatrix}
X & 0 \\
0 & Y
\end{pmatrix}$ denotes their direct sum, 
$X \circ Y$ denotes the entrywise product, where the $ij$-th entry of $X \circ Y$ is equal to  $ X_{ij} Y_{ij}$,
 and  $X \otimes Y$ their Kronecker product, defined as 
 the block matrix $X \otimes Y=\begin{pmatrix}
X_{11} Y & \ldots & X_{1n} Y \\
\vdots & \ddots & \vdots \\
X_{m1} Y & \ldots & X_{mn} Y
\end{pmatrix}$
if $X$ is $m\times n$.
If $X,Y \succeq 0$, then also $X \oplus Y$, $X \circ Y$ and $X \otimes Y$ are positive semidefinite.

We will also use the following elementary facts.
First, $nI-J\succeq 0$, where $I,J$ are the identity and all-ones matrix in $\mathcal S^n$. If $X,Y \succeq 0$, then $\langle X,Y \rangle = 0$ if and only if $XY = 0$.
Moreover, for  $X\in \mathcal S^n$ of the form
$X=\left(\begin{matrix} \alpha & b^T\cr b & A\end{matrix}\right),$ where  
$b\in \R^{n-1}, \ A\in \mathcal S^{n-1} \text{ and } \alpha >0,$
\begin{equation}\label{eqSchur}
X\succeq 0\  \Longleftrightarrow \ 
A- 
bb^T/\alpha \succeq 0.
\end{equation}
The matrix $A-
 bb^T/\alpha$ is called the Schur complement of $A$ in $X$ w.r.t. the entry $\alpha$.

Given a cone $\mathcal K \subseteq \mathcal S^n$, its dual cone is the cone
$\mathcal 
K^* = \{ X \in S^n \st \langle X,Y\rangle  \geq 0 \quad \forall \, Y \in \mathcal K \}.$
Recall that the cone $\mathcal S^n_+$ is self-dual, i.e., $\mathcal S_+^{n*}=\mathcal S^n_+$.  
Given $C,A_j\in \mathcal S^n$ and $b_j\in\R$ for $j\in [m]$, consider the following pair of primal and dual conic programs over a nice cone $\K$ (i.e., $\K$ is closed, convex,  pointed and full-dimensional):
\begin{eqnarray}
p^*=\sup \ \langle C,X\rangle \ \text{ s.t. } \langle A_j,X\rangle =b_j\ \forall j\in [m],\ X\in \K, \label{eqP}\\
d^*=\inf \ \sum_{j=1}^m b_jy_j \ \text{ s.t. } Z=\sum_{j=1}^m y_jA_j-C\in \K^*.\label{eqD}
\end{eqnarray}
Weak duality holds: $p^*\le d^*$. Moreover, assume that $d^*>-\infty$ and  (\ref{eqD}) is strictly feasible (i.e., has a feasible solution $y,Z$ where $Z$ lies in the interior of $\K^*$), then strong duality holds: $p^*=d^*$ and (\ref{eqP}) attains its supremum.

\section{Classical and quantum graph parameters}\label{secparameters}

\subsection{Classical graph parameters}\label{secclassical}
We group here several preliminary results about classical graph parameters that we will need in the paper. In what follows $G$ is a graph on $n$ vertices.
We begin with the following  result of Chv\'atal~\cite{Chvatal:1973} which shows how to relate the chromatic number of $G$ to the stability number of the Cartesian product $G \Box K_t$.

\begin{theorem}\label{theoChvatal}\cite{Chvatal:1973}
For any graph $G$ and any integer $t\ge 1$, 
$\chi(G) \leq t \iff \alpha(G \square K_t) = |V(G)|.$
Hence, $\chi(G)$ is equal to the smallest integer $t$ for which  $\alpha(G \square K_t) = |V(G)|$ holds.
\end{theorem}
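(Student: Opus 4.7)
The plan is to prove both directions of the equivalence by exploiting the fact that, in $G \Box K_t$, the vertices $\{(u,i) : i \in [t]\}$ form a clique (a copy of $K_t$) for each fixed $u \in V(G)$. This immediately gives the upper bound $\alpha(G \Box K_t) \le |V(G)|$, since any stable set can contain at most one vertex from each such clique. So the equivalence is really about when this upper bound is attained.

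For the forward direction ($\Rightarrow$), I would take a proper coloring $c : V(G) \to [t]$ witnessing $\chi(G) \le t$ and form the set $S = \{(u, c(u)) : u \in V(G)\}$, which has cardinality $|V(G)|$. I would verify that $S$ is stable in $G \Box K_t$: two distinct elements $(u,c(u))$ and $(v,c(v))$ have $u \ne v$, so they can only be adjacent if $u \sim v$ in $G$ and $c(u) = c(v)$, which is ruled out by properness of $c$.

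For the reverse direction ($\Leftarrow$), I would start from a stable set $S$ of size $|V(G)|$ in $G \Box K_t$. Using the clique observation above, $S$ must pick exactly one vertex from each column $\{(u,i) : i \in [t]\}$, so $S = \{(u, c(u)) : u \in V(G)\}$ for a well-defined map $c : V(G) \to [t]$. I would then argue that $c$ is a proper coloring: if $u \sim v$ in $G$ then $(u, c(u))$ and $(v, c(v))$ are non-adjacent in $G \Box K_t$, which given $u \sim v$ forces $c(u) \ne c(v)$. Hence $\chi(G) \le t$.

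There is no real obstacle here; the only subtlety worth being careful about is ensuring that the upper bound $\alpha(G \Box K_t) \le |V(G)|$ is stated and used explicitly, since it is what turns the reverse direction into an equality-attaining argument and justifies the final sentence of the theorem (that $\chi(G)$ is the smallest such $t$). Once both implications are established, this last sentence follows by taking the minimum $t$ over the equivalence.
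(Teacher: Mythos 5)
Your proof is correct. The paper does not supply a proof of this result — it simply cites Chv\'atal (1973) — so there is no in-paper argument to compare against; your argument is the standard one and is complete. In particular, you correctly isolate the key observation that each column $\{(u,i) : i \in [t]\}$ is a clique in $G \Box K_t$, which gives both the upper bound $\alpha(G \Box K_t) \le |V(G)|$ and the ``exactly one per column'' structure needed to read a coloring off a maximum stable set, and the final sentence of the theorem then follows as you say by minimizing over $t$.
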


\noindent
Next we recall  the following  reformulation for  the stability number $\alpha(G)$ as an optimization problem over the completely positive cone, which was proved by de Klerk and Pasechnik~\cite{deKlerk:2002}. 

\begin{theorem}~\cite{deKlerk:2002}\label{theodeK}
For any graph $G$, its stability number $\alpha(G)$ is equal to the optimum value of the following  program:
\begin{equation}\label{def:stab}
\begin{aligned}
 \max \; \langle J,X \rangle \quad \text{s.t.} \quad & X \in \cp^{n}, \quad
 \Tr(X)= 1,
& X_{uv} = 0 \quad \forall \{u,v\} \in E(G).
\end{aligned}
\end{equation}
\end{theorem}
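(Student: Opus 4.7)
The plan is to prove the two inequalities separately, establishing that the optimum value, call it $\xi(G)$, satisfies $\alpha(G) \le \xi(G)$ via an explicit construction and $\xi(G) \le \alpha(G)$ via a decomposition argument exploiting the support structure forced by the edge constraints.

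For the easy direction $\alpha(G) \le \xi(G)$, I would let $S$ be a maximum stable set in $G$ with $|S|=\alpha(G)$, and let $\chi_S \in \{0,1\}^n$ be its characteristic vector. I would then take
\beqn
X := \frac{1}{|S|}\, \chi_S \chi_S^T.
\eeqn
Since $\chi_S \ge 0$, $X$ is a rank-one completely positive matrix, so $X \in \cp^n$. We have $\Tr(X) = \|\chi_S\|^2/|S| = 1$, and for any edge $\{u,v\}\in E(G)$ at least one of $u,v$ lies outside $S$ (because $S$ is stable), so $X_{uv}=0$. Finally $\langle J, X\rangle = (\sum_i (\chi_S)_i)^2/|S| = |S| = \alpha(G)$, proving $\xi(G)\ge \alpha(G)$.

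For the reverse direction $\xi(G)\le \alpha(G)$, I would take any feasible $X\in \cp^n$ and use the definition of the completely positive cone to write $X = \sum_{k=1}^r x_k x_k^T$ with nonnegative vectors $x_k \in \R^n_+$. The crucial observation is that the edge constraints $X_{uv}=0$ force $\sum_k (x_k)_u (x_k)_v = 0$ for every edge $\{u,v\}$; since each term is nonnegative, this means that for every $k$ the support $\mathrm{supp}(x_k) := \{i : (x_k)_i>0\}$ contains no edge, \ie, is a stable set of $G$. In particular $|\mathrm{supp}(x_k)| \le \alpha(G)$. Applying Cauchy--Schwarz on the support gives
\beqn
\Big(\sum_{i=1}^n (x_k)_i\Big)^2 = \Big(\sum_{i\in \mathrm{supp}(x_k)} (x_k)_i\Big)^2 \le |\mathrm{supp}(x_k)|\cdot \|x_k\|^2 \le \alpha(G)\,\|x_k\|^2.
\eeqn
Summing over $k$ yields
\beqn
\langle J,X\rangle = \sum_k \Big(\sum_i (x_k)_i\Big)^2 \le \alpha(G)\sum_k \|x_k\|^2 = \alpha(G)\,\Tr(X) = \alpha(G),
\eeqn
which is the desired bound.

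No step is really an obstacle here; the only subtlety is justifying the support claim cleanly, which follows from the fact that a sum of nonnegative reals vanishes only when each summand vanishes. The proof also illustrates why we must work in $\cp^n$ rather than $\dnn^n$: the support-by-support argument relies on a nonnegative Gram decomposition, which a merely doubly nonnegative matrix need not admit.
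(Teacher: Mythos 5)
Your proof is correct. The paper itself does not prove this theorem but simply cites de Klerk and Pasechnik; their original argument proceeds via a copositive reformulation of Motzkin--Straus, showing $\alpha(G)^{-1}=\min\{x^T(I+A_G)x : x\in\Delta_n\}$ and then passing to the conic dual pair $\alpha(G)=\min\{\lambda : \lambda(I+A_G)-J\in\cop^n\}=\max\{\langle J,X\rangle : \langle I+A_G,X\rangle=1,\ X\in\cp^n\}$, after which one shows the edge-zero constraints can be imposed without loss. Your route is genuinely different and, for the program exactly as stated here, more direct: the lower bound is the obvious rank-one construction, and the upper bound comes straight from decomposing a feasible $X\in\cp^n$ as $\sum_k x_kx_k^T$ with $x_k\in\R^n_+$, observing that the edge constraints force each $\mathrm{supp}(x_k)$ to be stable (a sum of nonnegative products vanishes iff each term does), and then applying Cauchy--Schwarz on each support to get $\langle J,X\rangle\le\alpha(G)\Tr(X)$. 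This buys you a short, self-contained argument that avoids Motzkin--Straus and copositive duality entirely and makes transparent exactly where complete positivity (as opposed to double nonnegativity) is used, namely in the existence of the nonnegative rank-one decomposition. One small point worth making explicit: the decomposition $X=\sum_k x_kx_k^T$ is equivalent to the paper's Gram-by-nonnegative-vectors definition (take the $x_k$ to be the rows of the $d\times n$ factor matrix), so you are not silently invoking a different definition of $\cp^n$.
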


\noindent
In the same vein, Dukanovic and Rendl \cite{Dukanovic:2010} gave the following reformulation for the fractional chromatic number $\chi_f(G)$.

\begin{theorem}\cite{Dukanovic:2010} \label{theoDR}
For any graph $G$, its fractional chromatic number $\chi_f(G)$ is equal to the optimum value of the following program:
\begin{equation*}
\begin{aligned}
\min \; t \ \text{ s.t. }  X\in \cp^{n}, \quad X-J\succeq 0, \quad X_{uu}=t \quad \forall u\in V(G),
 \quad X_{uv} = 0 \quad \forall \{u,v\} \in E(G).
\end{aligned}
\end{equation*}
\end{theorem}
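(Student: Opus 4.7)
My plan is to establish the two inequalities $\mathrm{opt} \leq \chi_f(G)$ and $\mathrm{opt} \geq \chi_f(G)$ separately, by an explicit construction in one direction and an LP-duality argument in the other.

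For $\mathrm{opt} \leq \chi_f(G)$, I would start from an optimal fractional coloring realizing $\chi_f(G)=:t$, i.e., stable sets $S_1,\ldots,S_k$ with weights $\lambda_h \geq 0$ satisfying $\sum_h \lambda_h = t$ and $\sum_{h:u\in S_h} \lambda_h = 1$ for every vertex $u$. I propose the candidate
$$X = t\sum_{h=1}^k \lambda_h\, \chi_{S_h}\chi_{S_h}^T,$$
where $\chi_{S_h}$ denotes the $0/1$ characteristic vector of $S_h$. Then $X \in \cp^n$ as a nonnegative combination of rank-one matrices built from nonnegative vectors; the normalization gives $X_{uu}=t$; and $X_{uv}=0$ on edges because no stable set contains both endpoints. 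The substantive check is $X - J \succeq 0$: setting $a_h := \sum_{u\in S_h} v_u$, one computes $v^T X v = t\sum_h \lambda_h a_h^2$, while rewriting $\sum_u v_u = \sum_u v_u \sum_{h:u\in S_h}\lambda_h = \sum_h \lambda_h a_h$ gives $v^T J v = (\sum_h \lambda_h a_h)^2$. Cauchy-Schwarz applied to $\sum_h \sqrt{\lambda_h}\cdot \sqrt{\lambda_h}\, a_h$, using $\sum_h \lambda_h = t$, is exactly the required inequality $v^T J v \leq v^T X v$.

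For $\mathrm{opt} \geq \chi_f(G)$, I would invoke the standard LP-dual of the fractional chromatic number,
$$\chi_f(G) = \max\Big\{\textstyle\sum_u \alpha_u : \alpha \in \R_+^n,\ \sum_{u\in S}\alpha_u \leq 1 \text{ for every stable set } S \text{ of } G\Big\},$$
and show that any feasible $(X,t)$ satisfies $\sum_u \alpha_u \leq t$ for every such $\alpha$. Take a cp factorization $X = \sum_h y^h (y^h)^T$ with $y^h \in \R^n_+$; the edge-zero conditions force each support $S_h := \mathrm{supp}(y^h)$ to be a stable set. For any feasible $\alpha$, I would sandwich the quadratic form $\alpha^T X \alpha$ between two bounds. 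From $X \succeq J$ one immediately gets $\alpha^T X \alpha \geq (\sum_u \alpha_u)^2$. Conversely, Cauchy-Schwarz on $\sum_{u\in S_h}\sqrt{\alpha_u}\cdot \sqrt{\alpha_u}\, y^h_u$, combined with $\sum_{u\in S_h}\alpha_u \leq 1$ and $\sum_h (y^h_u)^2 = X_{uu} = t$, yields $\alpha^T X \alpha \leq t \sum_u \alpha_u$. Comparing the two inequalities forces $\sum_u \alpha_u \leq t$, and maximizing over $\alpha$ gives $\chi_f(G) \leq t$, as desired.

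The main obstacle is identifying which dual objects to test the cp program against; once one recognizes that the LP dual of $\chi_f$ provides the natural class of nonnegative test vectors, both directions reduce to essentially the same Cauchy-Schwarz computation, used once in the construction and once in the duality bound. A notable feature compared with the completely positive reformulation of $\alpha(G)$ in Theorem~\ref{theodeK} is the role of the constraint $X \succeq J$: this is precisely what ties the diagonal value $t$ to the covering condition defining a fractional coloring, and dropping it would essentially bring us back to (a rescaling of) the $\alpha(G)$ program.
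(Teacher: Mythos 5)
Your proof is correct. Note that the paper states this theorem as a cited result from Dukanovic and Rendl \cite{Dukanovic:2010} and does not reproduce an argument, so there is no in-paper proof to compare against; I'll evaluate the argument on its own terms.

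Both directions check out. For $\mathrm{opt}\le\chi_f(G)$, the candidate $X=t\sum_h\lambda_h\,\chi_{S_h}\chi_{S_h}^T$ is manifestly in $\cp^n$, has the correct diagonal because of the exact covering condition $\sum_{h:u\in S_h}\lambda_h=1$, vanishes on edges, and the key constraint $X-J\succeq0$ reduces, via the identity $v^TJv=\bigl(\sum_h\lambda_h a_h\bigr)^2$ and the weighted Cauchy--Schwarz inequality with $\sum_h\lambda_h=t$, to the inequality you state. For $\mathrm{opt}\ge\chi_f(G)$, the use of the LP dual $\chi_f(G)=\max\{\sum_u\alpha_u : \alpha\ge0,\ \sum_{u\in S}\alpha_u\le1\ \forall\,\text{stable } S\}$ is exactly right: the edge-zero conditions and nonnegativity force each factor $y^h$ in a cp decomposition to be supported on a stable set, the second Cauchy--Schwarz together with $\sum_{u\in S_h}\alpha_u\le1$ and $\sum_h(y^h_u)^2=X_{uu}=t$ gives $\alpha^TX\alpha\le t\sum_u\alpha_u$, and combining with $\alpha^TX\alpha\ge(\sum_u\alpha_u)^2$ from $X\succeq J$ yields $\sum_u\alpha_u\le t$. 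Taking the supremum over feasible $\alpha$ and appealing to LP strong duality finishes the argument; the min in the conic program is attained because your explicit construction in the first direction achieves it. A pleasant feature of your argument is its symmetry: essentially the same Cauchy--Schwarz estimate is used once to verify $X\succeq J$ for the constructed $X$ and once to exploit $X\succeq J$ against the dual variables, and your closing remark about $X\succeq J$ being what ties $t$ to the covering condition (and distinguishes this from the $\alpha(G)$ program of Theorem~\ref{theodeK}) is exactly the right structural observation.
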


\noindent
A well known  bound for both the stability and the (fractional) chromatic numbers is provided by the celebrated  theta number $\vartheta$ of  Lov\'asz~\cite{Lovasz:1979}, who showed 
the following  `sandwich' inequalities:
\begin{equation}\label{eqsandwich}
\alpha(G) \leq \vartheta(G) \leq \chi_f(\overline{G})\le \chi(\overline G).
\end{equation}
Between the many equivalent formulations of the  theta number, the following will be appropriate for our setting:
\begin{equation}\label{opt:theta-stand}
\begin{split}
\vartheta(G) =  \max \quad & \langle J,X \rangle \\
 \text{s.t.} \quad  & X \succeq 0\\
& \Tr(X) = 1\\
&X_{uv} = 0 \;\; \forall \, \{u,v\} \in E(G);\\
\end{split}
\begin{split}
= \min \quad & t \\
\text{s.t.} \quad & Z \succeq 0, \; Z-J \succeq 0\\ 
& Z_{uu} = t \;\; \forall \, u \in V(G)\\
&Z_{uv} = 0 \;\; \forall \, \{u,v\} \in E(\overline G).
\end{split}
\end{equation}

\noindent
In view of Theorem \ref{theodeK}, if in the above maximization program defining $\vartheta(G)$ we replace the condition $X\succeq 0$ by the condition $X\in \cp$, then the optimal value is equal to $\alpha(G)$. 
Similarly, in view of Theorem \ref{theoDR},  
$\chi_f(\overline G)$ is the optimal value of the above minimization program defining $\vartheta(G)$ when, instead of requiring that $Z \succeq 0$, we impose the condition $Z \in \cp$.

Several strengthenings of $\vartheta(G)$ toward $\alpha(G)$ and $\chi(G)$ have been proposed, in particular, the following 
parameters $\vartheta'(G)$ introduced independently by 
Schrijver~\cite{Schrijver:1979} and  McEliece et al.~\cite{McEliece:1978} and $\vartheta^+(G)$ introduced by  Szegedy~\cite{Szegedy:1994}:
\begin{equation}\label{opt:theta-str}
\begin{split}
\vartheta'(G) =  \max \quad & \langle J,X \rangle \\
 \text{s.t.} \quad  & X\in \dnn^n \\ 
& \Tr(X) = 1\\
&X_{uv} = 0 \;\; \forall \, \{u,v\} \in E(G);\\
\end{split}
\begin{split}
\vartheta^+(G) =  \min \;\; & t \\
\text{s.t.} \;\; &  Z\in \dnn^n, \, Z-J\succeq 0 \\ 
& Z_{uu} = t \;\; \forall \, u \in V(G)\\
&Z_{uv} =0  \;\; \forall \, \{u,v\} \in E(\overline G).\\
\end{split}
\end{equation}

\noindent
The following inequalities hold, which refine (\ref{eqsandwich}):
\begin{equation}\label{eqsandwichp}
\alpha(G) \le \vartheta'(G)\le \vartheta (G) \le \vartheta^+(G) \le  \chi_f(\overline G)\le \chi(\overline G).
\end{equation}

\medskip
Following \cite{Dukanovic:2010}, we now introduce the following conic programs (\ref{eqthetas-K}), which are  obtained by replacing in the above programs (\ref{opt:theta-stand})  the positive semidefinite cone by a general convex cone $\K$ nested between   the cones $\cp$ and $\dnn$.
Namely, given a graph $G$, we consider the following parameters $\vartheta^\K(G)$ and $\Theta^\K(G)$,
  which we will use later in Sections \ref{seclink}, \ref{secconealpha} and \ref{secconechi}:

\begin{equation}\label{eqthetas-K}
\begin{split}
\vartheta^{\mathcal K}(G) =  \sup \ & \langle J,X \rangle \\
 \text{s.t.} \  & X\in \mathcal{K}^{n}\\
& \Tr(X) = 1\\
&X_{uv} = 0 \;\; \forall \ \{u,v\} \in E(G);\\
\end{split}
\begin{split}
\Theta^{\mathcal K}(G) =  \inf \;\; & t \\
\text{s.t.} \;\; &   Z\in \mathcal{K}^n,  \, Z-J\succeq 0 \\ 
& Z_{uu} = t \;\; \forall \, u \in V(G)\\
&Z_{uv} =0  \;\; \forall \, \{u,v\} \in E(G).\\
\end{split}
\end{equation}

If in the relations from (\ref{eqthetas-K}), we set $\K=\dnn$ or $\K=\cp$ then, using the above definitions and   Theorems \ref{theodeK} and \ref{theoDR},  we find respectively the definitions  of $\vartheta'(G)$, $\alpha(G)$, $\vartheta^+(\overline G)$, $\chi_f(G)$. That is,
\begin{equation}\label{eqlinktheta}
\vartheta^{\dnn}(G)=\vartheta'(G),\ \vartheta^\cp(G)=\alpha(G), \
\Theta^\dnn(G)=\vartheta^+(\overline G), \ \Theta^\cp(G)=
\chi_f(G).
\end{equation}

\begin{remark}\label{remK}
We observe a `monotonicity' property for the  program defining $\vartheta^{\mathcal K}(G)$ in (\ref{eqthetas-K}), that will be useful later.
Set $n=|V(G)|$ and consider scalars   $1\le t< T$.
Assume that a matrix $X$ is feasible for the program defining $\vartheta^\K(G)$ with value $\langle J,X\rangle =T$.
 Then the matrix  $X'={t-1\over T-1}X+{T-t\over n(T-1)}I$ is again feasible for   $ \vartheta^\K(G)$ and it has value   $\langle J,X'\rangle =t$. 
\end{remark}

\subsection{Quantum graph parameters}\label{secquantum}

We now introduce  two `quantum' variants  $\alpha_q(G)$ and $\qindep(G)$ of the stability number and two `quantum' variants $\chi_q(G)$ and $\qchrom(G)$ of the chromatic number, which have been considered  in the literature.
Here we will give mathematical definitions.
 Motivation for these parameters will be given in Section \ref{secmotivation} below.

\begin{definition}[Game entanglement-assisted stability number~\cite{Roberson:2012}]\label{def:gqindep}
For a graph $G$,   $\alpha_q(G)$ is the maximum integer $t\in\N$ for which  there exist positive semidefinite matrices $\rho, \rho_i^u\in \mathcal S^d_+$ for $i\in[t],\, u\in V(G)$ (for some $d\ge 1$) satisfying the following conditions:
\beqr
\langle \rho,\rho\rangle  &=& 1, \label{eq:qalpha-1}\\
\sum_{u\in V(G)} \rho^u_i &=& \rho \quad \forall i\in[t] \label{eq:qalpha-sum}, \\
\langle\rho^u_i,\rho^v_j \rangle &=& 0 \quad \forall i\ne j \in [t],\, \forall u\simeq v \in V(G), \label{eq:qalpha-ort1}\\
\langle\rho^u_i,\rho^v_i \rangle &=& 0 \quad \forall i\in [t], \,  \forall u\ne v \in V(G).\label{eq:qalpha-ort2}
\eeqr
\end{definition}

\begin{definition}[Communication entanglement-assisted stability number~\cite{Cubitt:2010}]\label{def:qindep}
For a graph $G$,   $\qindep(G)$ is the maximum $t\in\N$ for which  there exist positive semidefinite matrices $\rho, \rho_i^u\in \mathcal S^d_+$ for $i\in[t],\, u\in V(G)$ (for some $d\ge 1$) satisfying the  conditions
(\ref{eq:qalpha-1}), (\ref{eq:qalpha-sum}) and (\ref{eq:qalpha-ort1}).
\end{definition}

\begin{definition}[Game entanglement-assisted chromatic number~\cite{Cameron:2007}]\label{def:gqchrom}
For a graph $G$,  $\chi_q(G)$ is the minimum $t\in\N$ for which  there exist  positive semidefinite matrices $\rho, \rho^i_u\in \mathcal S^d_+$ for $i \in [t]$, $u \in V(G)$   (for some $d\ge 1$) satisfying the following conditions:
\beqr
\langle \rho,\rho\rangle &=& 1, \label{eq:qchrom-1}\\
\sum_{i\in [t]} \rho^i_u &=& \rho \quad \forall u\in V(G), \label{eq:qchrom-sum} \\
\langle \rho_u^i, \rho_v^i \rangle &=& 0 \quad \forall i\in [t], \forall \{u, v\} \in E(G),\label{eq:qchrom-ort1}\\
\langle \rho_u^i, \rho_u^j \rangle &=& 0 \quad \forall i \ne j \in [t], \forall u \in V(G).\label{eq:qchrom-ort2}
\eeqr
\end{definition}

\begin{definition}[Communication entanglement-assisted chromatic number~\cite{Briet:2013}] \label{def:qchrom}
For a graph $G$,  $\qchrom(G)$ is the minimum~$t\in\N$ for which there exist  positive semidefinite matrices $\rho, \rho_u^i\in \mathcal S^d_+$ for $ i\in[t]$, $u\in V(G)$ (for some $d\ge 1$) satisfying the conditions  (\ref{eq:qchrom-1}), (\ref{eq:qchrom-sum}) and (\ref{eq:qchrom-ort1}).
\end{definition}

\noindent
The parameters $\alpha_q(G)$ and $\chi_q(G)$ can, respectively, be equivalently obtained from the definitions of $\qindep(G)$ and $\qchrom(G)$ if we require $\rho$ to be the identity matrix (instead of $\langle \rho,\rho\rangle  = 1$) and  the other positive semidefinite matrices to be orthogonal projectors, i.e., to satisfy $\rho^2 = \rho$ (see \cite{Roberson:2012} and \cite{Cameron:2007}). 
Moreover, if in the programs of Definitions~\ref{def:gqindep} and~\ref{def:qindep}  we require the variables $\rho^{i}_{u}$ to be $0 / 1$ valued, then we obtain the classical parameter $\alpha(G)$. 
Similarly, if in Definitions~\ref{def:gqchrom} and~\ref{def:qchrom} we restrict the variables $\rho^{i}_{u}$ to be $0 / 1$ valued, then we obtain program (\ref{eq0}) and thus the classical parameter $\chi(G)$.
Therefore, the following inequalities hold:
$$\alpha(G)\le \alpha_q(G)\le \qindep(G) \text{ and } \qchrom(G)\le \chi_q(G)\le \chi(G).$$

\noindent
It is not known whether the parameters $\alpha_q(G)$ and $\qindep(G)$ are in general equal or not. The same question holds for $\chi_{q}(G)$ and $\qchrom(G)$.
Recently, several  bounds for the quantum parameters have been established in terms of the theta number.
Namely, \cite{Beigi:2010,Duan:2013} show  that
$\qindep(G)\le \vartheta(G)$, \cite{Cubitt:2013} shows the tighter bound $\qindep(G)\le \vartheta'(G)$, and 
\cite{Briet:2013} shows that $\qchrom(G)\ge \vartheta^{+}(\overline{G})$.  Summarizing, the following sandwich inequalities hold:
\begin{equation}\label{eqsandwichq}
\alpha(G)\le \alpha_q(G)\le \qindep(G)\le \vartheta'(G)  \text{ and } \vartheta^{+}(\overline{G})\le \qchrom(G)\le \chi_q(G)\le \chi(G).
\end{equation}
Using our approach of reformulating the quantum parameters as optimization problems over the cone $\cgpsd$, we will recover these bounds (see Section \ref{secconicopt}, in particular, Corollaries~\ref{corsandqa} and \ref{corsandqc}).

\subsection{Motivation}\label{secmotivation}

The quantum graph parameters that we have just defined arise in the general context of the study of entanglement, one of  the most important features of quantum mechanics.
In particular, the parameters $\alpha_q(G)$ and $\chi_q(G)$ are defined in terms of {\em nonlocal games}, which are mathematical abstractions of a physical experiment introduced by \cite{Cleve:2004}.
In a nonlocal game, two (or more) cooperating players determine a common strategy to answer questions posed by a referee.
The questions are drawn from a finite set and the referee sends a question to each of  the players.
The players, without communicating, must each respond to their question and the referee upon collecting all the answers determines according to the rules of the game whether the players win or lose. 
We can now study properties of quantum mechanics, by asking the following question: Does entanglement between the players allow for a better strategy than the best classical one?
Surprisingly, players that share entanglement can (for some games) produce answers correlated in a way that would be impossible in a classical world. 
(For an introduction to the topic we recommend the survey \cite{Cleve:2004}.)

For a fixed integer  $t \in \mathbb{N}$,
consider a game where two players want to convince a referee that they can color a graph $G$ with at most $t$ colors.
The players each receive a vertex from the referee and they answer by returning a color from $[t]$. They win the game if they answer the same color upon receiving the same vertex and different colors if the vertices are adjacent.
When the players use classical strategies, they can always win if and only if there exists a $t$-coloring of the graph.
In other words, the chromatic number $\chi(G)$ is the minimum $t$ for which the players can always win the game using classical strategies.
In the entanglement-assisted setting, $\chi_q(G)$ is the smallest number of colors that the players must use in order to always win the game (see \cite{Cameron:2007} for details). 
This parameter has recently received a notable amount of attention (see among others \cite{Avis:2006,Cameron:2007,Fukawa:2011,Mancinska:2012, Roberson:2012,Ji:2013,Paulsen:2013}).

Analogously to $\chi_q(G)$, $\alpha_q(G)$ is the maximum integer $t$ for which two players sharing an entangled state can convince a referee that the graph $G$ has a stable set of cardinality $t$. For a detailed description of the game we refer to \cite{Roberson:2012}. 

From the above description, it is not directly clear that Definitions~\ref{def:gqindep} and \ref{def:gqchrom} are the proper mathematical formulations for the parameters $\alpha_{q}(G)$ and $\chi_{q}(G)$ and this indeed requires a proof (which can be implicitly found in \cite{Roberson:2012} for $\alpha_{q}(G)$ and in \cite{Cameron:2007} for $\chi_{q}(G)$).
We also refer the interested reader to \cite[Sections 6.2, 6.4 and 6.5]{Roberson:PhD}, where a direct mathematical proof which does not require quantum information background is given.
Furthermore note that in the above mentioned references, the matrices $\rho_{i}^{u}$ (or respectively $\rho_{u}^{i}$) are required to be projectors and $\rho$ to be the identity matrix.  By replacing each matrix $\rho_{i}^{u}$ by the projection onto its image,  it can be seen that we can equivalently consider Definitions~\ref{def:gqindep} and \ref{def:gqchrom}.

Another setting where the properties of entanglement can be studied is zero-error information theory.
Here two parties want to perform a communication task (e.g., communicating through a noisy channel) both exactly and efficiently.
These problems have led  to the development of a new line of research in combinatorics 
(see \cite{Korner:1998} for a survey and references therein).
Recently Cubitt et al.~\cite{Cubitt:2010} started studying whether sharing entanglement between the two parties can improve the communication. A number of positive results, where entanglement does improve the communication, have been obtained \cite{Cubitt:2010,Leung:2012,Briet:2013}.
Without getting into details, the parameters $\qindep(G)$ and $\qchrom(G)$ arise in this entanglement-assisted information theory setting.
For the full description of the problem and its mathematical formulation we refer to \cite[Theorem 1]{Cubitt:2010} and \cite[Definition I.5]{Briet:2013} for $\qindep(G)$ and $\qchrom(G)$, respectively. 

\medskip
We now briefly summarize some known properties of these parameters.
One of the most interesting questions is to find and characterize graphs for which there is a separation between a quantum parameter and its classical counterpart.
Clearly there is no such separation when $G$ is a perfect graph since then  the inequalities (\ref{eqsandwichp}) and (\ref{eqsandwichq}) imply $\alpha(G) = \alpha_q(G) = \qindep(G) = {\vartheta(G)}$ and ${\vartheta(\overline G)} = \qchrom(G) = \chi_q(G) = \chi(G)$. 
Indeed, if a graph $G$ is perfect then both $\alpha(G) = \chi(\overline G)$ and $\alpha(\overline G) = \chi(G)$ hold. Therefore, equality holds throughout in (\ref{eqsandwichp}) and in (\ref{eqsandwichq}).

A few separation results are known. For instance, there exists a graph for which $\qchrom(G) = \chi_q(G) = 3$ but $\chi(G) = 4$ \cite{Fukawa:2011}, and a family of graphs exhibiting an exponential separation between $\chi_q$ and $\chi$ 
\cite{Avis:2006}
 (and therefore also between $\qchrom$ and $\chi$).
This family is composed of the {\em orthogonality graphs}  $\Omega_n$ (where $n$ is a multiple of 4) whose vertices are all  the vectors in $\{\pm 1\}^n$  and two vertices are adjacent if the vectors are orthogonal. 
These graphs have been used to construct graphs that exhibit an exponential separation between $\qindep$ and $\alpha$ in \cite{Mancinska:2012} and between $\alpha_q$ and $\alpha$ in \cite{Roberson:2012}.

While for the classical parameters the inequality  $\chi(G) \alpha(G)\ge |V(G)|$ holds for any graph $G$, interestingly this is not true for the quantum counterparts.
As noticed in \cite{Roberson:2012}, if $n$ is a multiple of 4 but not a power of 2, then $\chi_q(\Omega_n)\alpha_q(\Omega_n) < |V(\Omega_n)|$ and the exact same reasoning implies that $\qchrom(\Omega_n)\qindep(\Omega_n) < |V(\Omega_n)|$.
Finally, the chromatic and stability number are NP-hard quantities and  recently Ji \cite{Ji:2013} proved that deciding whether $\chi_q(G) \le 3$ is an NP-hard problem. 
Roberson and Man\v{c}inska~\cite[Lemma 4.5]{Roberson:2012} showed the analogue of Theorem~\ref{theoChvatal} for the quantum parameters $\alpha_{q}$ and $\chi_{q}$, namely that $\chi_{q}(G) \le t$ if and only if $\alpha_{q}(G \square K_{t}) = |V(G)|$, and thus computing the parameter $\alpha_{q}(G)$ is NP-hard as well.

\section{The completely positive semidefinite cone}\label{seccone}

In this section we introduce the completely positive semidefinite cone $\cgpsd$, we establish some of its basic properties and  its relation with  the  completely positive cone and with the doubly nonnegative cone. We also  
  investigate its  dual cone $\cgpsd^{*}$ and we introduce the   convex sets $\Knceps$ aiming to approximate it.

\subsection{Basic properties}\label{secbasic}

Recall that for any positive semidefinite matrix $A$ there exists a set of vectors $x_1,\ldots,x_n \in \R^d$ that form its Gram representation, i.e., $A=(\langle x_i,x_j\rangle)_{i,j=1}^n$.
We now consider Gram representations by {\em positive semidefinite matrices}.

\begin{definition}
A matrix $A\in \mathcal S^n$ is said to be {\em completely positive semidefinite} (completely psd, for short) if  there exist matrices $X_1,\ldots, X_n\in \mathcal S^d_+$ (for some $d\ge 1$) such that 
$A=(\langle X_i,X_j\rangle )_{i,j=1}^n$. Then we also say that $X_1,\ldots,X_n$ form a  Gram representation of $A$. We let 
$\cgpsd^n$ denote the set of all $n \times n$ completely positive  semidefinite matrices.
\end{definition}

\begin{lemma}\label{lemconvex}
$\cgpsd^n$ is a convex cone.
\end{lemma}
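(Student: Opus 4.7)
The plan is to verify the two closure properties defining a convex cone: closure under nonnegative scalar multiplication and closure under addition. Both will follow directly from manipulating Gram representations by psd matrices, using two elementary facts: the positive multiple of a psd matrix is psd, and the direct sum of psd matrices is psd.

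For scalar multiplication, suppose $A\in\cgpsd^n$ has a Gram representation $X_1,\ldots,X_n\in\mathcal S^d_+$, and let $\lambda\ge 0$. Set $X_i'=\sqrt{\lambda}\,X_i$, which is still psd. Then
\[
\langle X_i',X_j'\rangle = \lambda\,\langle X_i,X_j\rangle = \lambda A_{ij},
\]
so $\lambda A\in\cgpsd^n$.

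For addition, suppose $A,B\in\cgpsd^n$ admit Gram representations $X_1,\ldots,X_n\in\mathcal S^{d_A}_+$ and $Y_1,\ldots,Y_n\in\mathcal S^{d_B}_+$, respectively, possibly of different sizes. The natural trick is to embed both families into a common block-diagonal space by taking the direct sum: set $Z_i = X_i\oplus Y_i \in\mathcal S^{d_A+d_B}_+$, which is psd because $X_i,Y_i$ are. Since the trace inner product is additive on block-diagonal matrices,
\[
\langle Z_i,Z_j\rangle = \langle X_i,X_j\rangle + \langle Y_i,Y_j\rangle = A_{ij}+B_{ij},
\]
exhibiting $Z_1,\ldots,Z_n$ as a psd Gram representation of $A+B$. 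Hence $A+B\in\cgpsd^n$.

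There is no real obstacle here — the argument is entirely mechanical once one recalls that direct sums and positive rescalings preserve positive semidefiniteness. The only subtle point worth flagging is that the definition of $\cgpsd^n$ allows the dimension $d$ of the psd matrices to vary; this is precisely why the direct-sum construction works without further assumption. Note that this proof says nothing about whether $\cgpsd^n$ is closed, which (as pointed out in the introduction) is a separate and genuinely open question.
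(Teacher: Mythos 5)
Your proof is correct and takes essentially the same approach as the paper: closure under addition via direct sums $X_i\oplus Y_i$, and closure under nonnegative scaling via $\sqrt{\lambda}\,X_i$. (Incidentally, the paper's printed proof has a small typo, writing ``Gram representation of $X+Y$'' where $A+B$ is meant; your version states it correctly.)
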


\begin{proof}
Let $A,B\in \cgpsd^n$ and assume that  $X_1,\ldots,X_n\in\mathcal S^d_+$ and $Y_1,\ldots,Y_n\in \mathcal S^{k}_+$ form a Gram representation of $A$ and $B$, respectively. 
Then, the matrices $X_1\oplus Y_1,\ldots, X_n\oplus Y_n$ are psd and form a Gram representation of $X+Y$, thus showing that $X+Y\in \cgpsd^n$.

Moreover, let $\lambda \ge 0$ and consider the psd matrices $\sqrt \lambda X_1, \ldots, \sqrt \lambda X_n$. These form a Gram representation of $\lambda A$, thus showing that $\lambda A\in \cgpsd^n$.
Hence $\cgpsd^n$ is a convex cone.
\end{proof}

As is well known, both  $\mathcal S^n_+$ and $\cp^n$  are closed sets. This is easy for $\mathcal S^n_+$, since it is a self-dual cone.
For $\cp^{n}$, this can be proven using the fact that its extreme rays 
are the rank 1 matrices $y y^{T}$ where $y \in \R^{n}_{+}$.
Therefore, any matrix in $\cp^n$ can be written as  $\sum_{i=1}^N y_iy_i^T$, where $y_1,\ldots,y_N\in\R^n_+$ and  $N \leq {n+1 \choose 2}$ (using Carath\'eodory's theorem), and thus closedness follows using a compactness argument (see, \eg,~\cite[Theorem 2.2]{Berman:2003} for the full proof).
Having an explicit description of the extreme rays of the $\cp$ cone is a key ingredient in many proofs concerning  $\cp$.
One of the difficulties in proving properties of the completely psd cone lies in the fact that we do not know an alternative  description for it; for example, we do not  know its extreme rays. 
In particular, one of the most interesting properties that we do not know is whether 
the cone $\cgpsd^n$ is  closed.

As we now see, deciding whether the cone $\cgpsd$  is closed is  related to the  following question: Does the existence of a Gram representation by {\em infinite} positive semidefinite matrices imply the existence of another Gram representation by positive semidefinite matrices of {\em finite} size?
More precisely, let $\mathcal S^{\N}$ denote the set of all infinite symmetric matrices $X=(X_{ij})_{i,j\ge 1}$ 
with finite norm: $\sum_{i,j\ge 1}X_{ij}^2 <\infty$. Thus $\mathcal S^\N$ is a Hilbert space, equipped with the inner product $\langle X,Y\rangle =\sum_{i,j\ge 1}X_{ij}Y_{ij}$.
Call a matrix $X\in  \mathcal S^\N$ psd (again denoted as $X\succeq 0$) when all its finite principal submatrices are psd, i.e., $X[I]\in \mathcal S^{|I|}_+$ for all finite subsets $I\subseteq \N$, and let $\mathcal S^\N_+$ denote the set of all psd matrices in $\mathcal S^{\N}$.
Finally,  let $\mathcal{CS}_{\infty +}^n$ denote the set of matrices $A\in \mathcal S^n$ having a Gram representation by elements of $\mathcal S^\N_+$.
As for $\cgpsd^n$, one can verify that $\mathcal{CS}_{\infty +}^n$ is a convex cone. Moreover we can show the following relationships  between these two cones.

\begin{theorem}\label{theoclosed}
For any $n \in \N$,  $\cgpsd^n \subseteq \mathcal{CS}_{\infty +}^n \subseteq \mathrm{cl}(\mathcal {CS}^n_{\infty +}) = \mathrm{cl}(\cgpsd^n)$ holds.
\end{theorem}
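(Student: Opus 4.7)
The plan is to verify the three inclusions separately: the easy embedding $\cgpsd^n\subseteq\mathcal{CS}_{\infty+}^n$, the trivial $\mathcal{CS}_{\infty+}^n\subseteq\mathrm{cl}(\mathcal{CS}_{\infty+}^n)$, and the nontrivial equality of closures.

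For the first inclusion, I would take $A\in\cgpsd^n$ with Gram representation $X_1,\ldots,X_n\in\mathcal S^d_+$ and simply pad each $X_i$ with zero rows and columns to obtain an element $\wtilde X_i\in\mathcal S^{\N}$. Since every finite principal submatrix of $\wtilde X_i$ is a principal submatrix of $X_i\oplus 0$, it is psd, so $\wtilde X_i\in\mathcal S^{\N}_+$; the trace inner product is preserved under this embedding, hence $\wtilde X_1,\ldots,\wtilde X_n$ form an infinite Gram representation of $A$.

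For the equality $\mathrm{cl}(\cgpsd^n)=\mathrm{cl}(\mathcal{CS}_{\infty+}^n)$, one inclusion is immediate from the first step by taking closures. For the reverse inclusion it suffices to show $\mathcal{CS}_{\infty+}^n\subseteq\mathrm{cl}(\cgpsd^n)$. Given $A\in\mathcal{CS}_{\infty+}^n$ with Gram representation $X_1,\ldots,X_n\in\mathcal S^{\N}_+$, the idea is truncation: for each integer $m\ge 1$, set $X_i^{(m)}:=X_i[\{1,\ldots,m\}]\in\mathcal S^m_+$ (psd by the very definition of $\mathcal S^{\N}_+$), and define $A^{(m)}:=\bigl(\langle X_i^{(m)},X_j^{(m)}\rangle\bigr)_{i,j=1}^n\in\cgpsd^n$. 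Because $\sum_{k,l\ge 1}(X_i)_{kl}^2<\infty$ for each $i$, the partial sums $\langle X_i^{(m)},X_j^{(m)}\rangle=\sum_{k,l\le m}(X_i)_{kl}(X_j)_{kl}$ converge to $\langle X_i,X_j\rangle=A_{ij}$ as $m\to\infty$ (a dominated-convergence/Cauchy--Schwarz argument in the Hilbert space $\mathcal S^{\N}$). Entrywise convergence in $\mathcal S^n$ is norm convergence, so $A^{(m)}\to A$ and hence $A\in\mathrm{cl}(\cgpsd^n)$; taking closures yields the desired reverse inclusion.

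I don't expect any serious obstacle: the only substantive ingredient is that truncation of an element of $\mathcal S^{\N}_+$ to a finite principal block stays psd, which is immediate, combined with the fact that finite-norm infinite matrices have tails vanishing in $\ell^2$. Care is needed only in noting that the embedding $\mathcal S^d_+\hookrightarrow\mathcal S^{\N}_+$ via zero-padding preserves both the psd property and the trace inner product, so that step~1 really does supply an infinite Gram representation of the same matrix $A$.
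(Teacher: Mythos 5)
Your proposal is correct and follows essentially the same route as the paper: zero-padding for the embedding $\cgpsd^n\subseteq\mathcal{CS}_{\infty+}^n$, truncation of the infinite Gram factors to finite principal blocks for $\mathcal{CS}_{\infty+}^n\subseteq\mathrm{cl}(\cgpsd^n)$, with the Cauchy--Schwarz estimate giving entrywise (hence norm) convergence, and then taking closures to obtain the equality. No gaps.
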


\begin{proof}
The inclusion $\cgpsd^n \subseteq \mathcal{CS}_{\infty +}^n$ is clear, since any matrix $X\in \mathcal S^d_+$ can be viewed as an element of $\mathcal S^\N_+$ by adding zero entries.

We now prove the inclusion: $\mathcal{CS}_{\infty +}^n \subseteq  \mathrm{cl}(\cgpsd^n)$.
For this, let $A \in \mathcal{CS}_{\infty +}^n$ and $X_1,\ldots,X_n\in \mathcal S^\N_+$  be a Gram representation of $A$, i.e., 
$A_{ij} = \langle X_i,X_j \rangle$ for $i,j \in [n]$.
For any $\ell \in \N$ and $i \in [n]$, let $X_i^\ell=X_i[\{1,\ldots,\ell\}]$ be the $\ell \times \ell$ upper left principal submatrix of $X_i$ and let
$\widetilde{X}_i^\ell\in \mathcal S^\N$ be the infinite matrix obtained by adding zero entries to $X_i^\ell$. Thus, $X^\ell_i\in \mathcal S^\ell_+$ and $\widetilde X^\ell_i \in \mathcal S^\N_+$.
Now, let $A^\ell$ denote the Gram matrix of $X^\ell_1,\ldots,X^\ell_n$, so that  $A^\ell \in \cgpsd^n$. 
We  show that the sequence $(A^\ell)_{\ell\ge 1}$ converges to $A$ as $\ell$ tends to $\infty$, which shows that $A\in \mathrm{cl}(\cgpsd^n)$. 
 Indeed, for any $i,j \in [n]$ and $\ell \in \N$, we have:
\beqrn
|A_{ij} - A_{ij}^\ell| 
&=& | \langle X_i,X_j \rangle - \langle X_i^\ell,X_j^\ell \rangle |\\
&\le & | \langle X_i- \widetilde{X}_i^\ell, X_j \rangle | + | \langle \widetilde{X}_i^\ell,X_j-\widetilde{X}_j^\ell \rangle | \\
&\leq & \| X_i - \widetilde{X}_i^\ell \| \| X_j \| + \| \widetilde{X}_i^\ell \| \| X_j - \widetilde{X}_j^\ell \|,
\eeqrn
using the Cauchy-Schwarz inequality in the last step.
Clearly, $\|\widetilde{X}_i^\ell \| \leq \|X_i\|=\sqrt{ A_{ii}}$ for all $\ell\in\N$ and $i\in [n]$.
Hence $\lim_{\ell \to \infty} | A_{ij} - A_{ij}^\ell| = 0$ for all $i,j\in [n]$, concluding the proof.

Taking the closure in the inclusions: $\cgpsd^n \subseteq \mathcal{CS}_{\infty +}^n \subseteq \mathrm{cl}(\cgpsd^n)$, we conclude that $ \mathrm{cl}(\mathcal {CS}^n_{\infty +}) = \mathrm{cl}(\cgpsd^n)$ holds.
\end{proof}

The recent work \cite{BLP} studies the closure of the cone $\cgpsd$. In particular it gives an interpretation of $\cl (\cgpsd^{n})$ in terms of Gram representations by positive elements in some  von Neumann algebra. We will use this in the next section to show a separation between the closure of $\cgpsd^5$ and $\dnn^5$. We also refer to Section \ref{secfinal} for some further comments.

\subsection{Links to  completely positive and doubly nonnegative matrices}\label{seclink}

The following relationships follow from the definitions:
\begin{equation}\label{eqinclusion}
\cp^n\subseteq \cgpsd^n\subseteq \mathcal S^n_+\cap \R^{n\times n}_+=:\dnn^n.
\end{equation}
As the cone $\cp^n$ is full-dimensional, the same holds for the cone $\cgpsd^n$. 
That every completely positive semidefinite matrix is entrywise nonnegative follows from the fact that $\langle X,Y\rangle \ge 0$ for all $X,Y\in \mathcal S^n_+$. 
Taking duals in (\ref{eqinclusion}) we get the corresponding inclusions:
\begin{equation}\label{eqinclusiond}
\dnn^{n*} \subseteq \cgpsd^{n*}\subseteq \cp^{n*}.
\end{equation}
The dual of $\cp^n$ is the {\em copositive cone}, which consists of all matrices $M\in \mathcal S^n$ that are {\em copositive}, i.e., satisfy $x^TMx\ge 0$ for all $x\in \R^n_+$. The dual of $\dnn^n$ is the cone $\mathcal S^n_+ + (\mathcal S^n \cap \R^{n\times n}_+)$.
We will investigate the dual of $\cgpsd^n$ in detail in the next section.

\medskip
We now present some results regarding the inclusions in (\ref{eqinclusion})
and  (\ref{eqinclusiond}).
Remarkably,  Diananda~\cite{Diananda:1962} and Maxfield and Minc~\cite{Maxfield:1962} have shown, respectively, that $\cp^{n*} = \dnn^{n*}$ and  $\cp^n = \dnn^n$ for any $n \leq 4$.
Hence equality holds throughout in (\ref{eqinclusion}) and (\ref{eqinclusiond}) for $n\le 4$. Moreover the inclusions 
$\cp^n\subseteq \dnn^n$ and $\dnn^{n*}\subseteq \cp^{n*}$ are known to be strict for any $n\ge 5$.
It suffices to show the strict inclusions for $n=5$, since  $A\in \dnn^5\setminus \cp^5$ implies $\widetilde A \in \dnn^n\setminus \cp^n$, where $\widetilde A$ is obtained by adding a border of zero entries to $A$. This extends to the cone $\cgpsd$. Indeed, the matrix $A$ belongs to $\cp^5$ (resp., $\dnn^5$, or $\cgpsd^5$) if and only if the extended matrix $\tilde A$ belongs to $\cp^n$ (resp., $\dnn^n$, or $\cgpsd^n$).

To show strict inclusions, we use some $5\times 5$  circulant matrices, with the following form:
\begin{equation*}\label{eqMcirc}
M(b,c) = \left( \begin{array}{rrrrr}
1 & b & c & c & b \\
b & 1 & b & c & c \\
c & b & 1 & b & c \\
c & c & b & 1 & b \\
b & c & c & b & 1
\end{array}\right), \ \text{ where } b,c\in\R.
\end{equation*}
The matrix $H = M(-1,1)$ is the well known Horn matrix, which is copositive  but  does not lie in the dual of the doubly nonnegative cone (see e.g.~\cite{Berman:2003}).
The strict inclusion: $\cp^5\subsetneq \cgpsd^5$ is shown in \cite{Fawzi:2014}
 using the matrix $L = M(\cos^2(4 \pi /5),\cos^2(2 \pi /5))=M((3+\sqrt 5)/8, (3-\sqrt 5)/8)$.
To show that $L$ is completely psd,
consider the matrix $\widehat L=M(\cos(4\pi/5),\cos(2\pi/5))$, so that $L$ is the entrywise square of $\widehat L$.
 Then, the vectors $x_i=(\cos(4i\pi/5), \sin(4i\pi/5))\in\R^2$ (for $i\in [5]$) form a Gram representation of $\widehat L$ and thus the psd matrices
 $x_ix_i^T\in \mathcal S^2_+$ ($i\in [5]$) form a Gram representation of $L$, which shows that $L$ is completely psd. 
However, $L$ is not completely positive, since its inner product with the Horn matrix is negative:
$\langle L,H\rangle = 5(2-\sqrt 5)/2<0$. 
Therefore,
 $$L\in \cgpsd^5\setminus \cp^5 \ \text{ and } H\in \cp^{5*}\setminus \cgpsd^{5*}.$$
 Frenkel and Weiner \cite{Frenkel:2014} showed that (a multiple of) the matrix $W = M((\sqrt 5-1)/2,0)$
  is doubly nonnegative but not completely psd, which
  proves the strict inclusion: $\cgpsd^{5} \subsetneq \dnn^{5}$.  
  Checking that $W$ is doubly nonnegative is simple and also that $W$ is not completely positive (since $\langle H,W\rangle = 5(2-\sqrt 5)<0$).  In Theorem \ref{lem:oddcycle} below, we prove that, for matrices whose pattern  of nonzero entries forms a cycle, being completely positive is equivalent to being completely psd. 
  We can use this result to show that  $W \not\in \cgpsd^{5}$ and to construct a class of matrices that are doubly nonnegative but not completely psd (see Lemma~\ref{lemclass}).
  Furthermore, at the end of this section, we will present the proof given in \cite{Frenkel:2014} that $W \not\in \cgpsd^{5}$ and we will show how we can use their ideas to prove that, in fact, $W$  does not belong to the closure of $\cgpsd^5$ (see Theorem~\ref{theoFW} and the discussion before it).
  This shows the strict inclusion:
$\mathrm{cl}(\cgpsd^5)\subsetneq \dnn^5$ and, by taking the duals  of both sides,  the strict inclusion:   $\dnn^{5*}\subsetneq \cgpsd^{5*}$ 
(Corollary~\ref{corA}).

\medskip
Given a matrix $A\in \mathcal S^n$, its {\em support graph} is the graph $G(A)=([n],E)$ where there is an edge $\{i,j\}$ when $A_{ij}\ne 0$ and $i\ne j$.
Moreover, the {\em comparison matrix} of $A$ is the matrix $C(A)\in \mathcal S^n$ with entries
$C(A)_{ii}=A_{ii}$ for all $i\in [n]$ and $C(A)_{ij}=-A_{ij}$ for all $i\ne j\in [n]$. We will use the following result characterizing completely positive matrices whose support graph is triangle-free.

\begin{theorem}\label{thm:cycledetcomparisonmatrix}\cite{Drew:1994} (see also~\cite{Berman:2003})
Let $A\in \mathcal S^n$ and assume  that its  support graph is triangle-free. Then, $A$ is completely positive if and only if its comparison matrix $C(A)$ is positive semidefinite.
\end{theorem}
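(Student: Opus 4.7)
My plan splits the proof into the two implications. Throughout I take $A \ge 0$ entrywise as an implicit hypothesis, since this is already necessary for $A$ to be completely positive.

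\emph{Forward direction} ($A \in \cp^n \Rightarrow C(A) \succeq 0$). Start from a Gram factorization $A = \sum_{k=1}^{N} y_k y_k^{T}$ with $y_k \in \R^n_+$. Triangle-freeness of $G(A)$ forces each $y_k$ to have support of size at most two: if instead $y_k$ had three nonzero entries at positions $i, j, \ell$, then $A_{ij}, A_{i\ell}, A_{j\ell}$ would all be strictly positive (no cancellation occurs in a sum of nonnegative rank-one matrices), producing a triangle in $G(A)$. Since the map $X \mapsto C(X)$ is linear, we have $C(A) = \sum_k C(y_k y_k^T)$, and for $y_k = \beta e_i + \gamma e_j$ with $\beta, \gamma \ge 0$ a direct computation gives
\[
C(y_k y_k^T) \;=\; (\beta e_i - \gamma e_j)(\beta e_i - \gamma e_j)^T \;\succeq\; 0.
\]
Summing these rank-one psd matrices yields $C(A) \succeq 0$.

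\emph{Reverse direction} ($C(A) \succeq 0$ with $G(A)$ triangle-free $\Rightarrow A \in \cp^n$). I proceed by induction on $m := |E(G(A))|$. The base case $m = 0$ gives $A$ diagonal with $A_{ii} = C(A)_{ii} \ge 0$, which is trivially cp. For the inductive step, pick an edge $\{i, j\} \in E(G(A))$ and attempt to write $A = B + \alpha(e_i + t e_j)(e_i + t e_j)^T$ with $\alpha, t > 0$ chosen so that: (i) $\alpha t = A_{ij}$, killing the $\{i,j\}$-entry of $B$; (ii) $B \ge 0$ entrywise, which reduces to $t \in [A_{ij}/A_{ii},\, A_{jj}/A_{ij}]$, a nonempty interval by the $2 \times 2$ bound $A_{ii} A_{jj} \ge A_{ij}^2$ implicit in $C(A) \succeq 0$; and (iii) $C(B) = C(A) - (A_{ij}/t)(e_i - t e_j)(e_i - t e_j)^T \succeq 0$. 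Once such $(\alpha, t)$ is produced, $G(B)$ is triangle-free with $m - 1$ edges, so the induction hypothesis yields $B \in \cp^n$ and hence $A \in \cp^n$.

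The main obstacle is the simultaneous realizability of conditions (i)--(iii). Here triangle-freeness plays the decisive role: because $i$ and $j$ share no common neighbor in $G(A)$, the rank-one correction $(e_i - t e_j)(e_i - t e_j)^T$ is ``localized'' at the $\{i,j\}$-block of $C(A)$ and only touches the rest of $C(A)$ through the diagonal entries at $i$ and $j$. Parametrizing over $t$ in the interval from (ii) and analyzing the Schur complement of the modified $\{i,j\}$-block, one verifies that an admissible choice exists. A cleaner variant first peels off vertices of degree at most one (reducing to the bipartite case, where $C(A) = D A D$ for a $\pm 1$ signing $D$ encoding the bipartition, so that $A \succeq 0$ follows immediately and a cp factorization is standard), and then handles the remaining $2$-connected triangle-free blocks directly; this reduction is the technical core of the argument of \cite{Drew:1994}.
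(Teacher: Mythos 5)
The paper does not prove this theorem itself; it is quoted as a known result from \cite{Drew:1994} and \cite{Berman:2003}, so there is no internal proof to compare against. I therefore assess your argument on its own.

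Your forward direction is correct and nicely self-contained: the observation that a nonnegative rank-one summand $y_k y_k^T$ with three nonzero coordinates would force a triangle in $G(A)$ (no cancellation among nonnegative terms), so every $y_k$ has support size $\le 2$, and then $C(y_ky_k^T)=(\beta e_i-\gamma e_j)(\beta e_i-\gamma e_j)^T\succeq 0$ by direct computation, linearity of $C(\cdot)$ giving $C(A)\succeq 0$.

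The reverse direction has a genuine gap. You set up the edge-peeling $A=B+\alpha(e_i+te_j)(e_i+te_j)^T$ correctly and verify that condition~(ii) carves out a nonempty interval for $t$, but the crucial step --- that one can choose $t$ in this interval so that $C(B)=C(A)-(A_{ij}/t)(e_i-te_j)(e_i-te_j)^T\succeq 0$ --- is only asserted (``one verifies that an admissible choice exists''), not proved. Triangle-freeness does not by itself make the rank-one correction harmless: the matrix $(e_i-te_j)(e_i-te_j)^T$ is always supported on the $\{i,j\}$ block regardless of the graph, and the condition $C(A)-\alpha vv^T\succeq 0$ is a range/pseudoinverse constraint ($v\in\mathrm{range}\,C(A)$ and $\alpha\le 1/(v^TC(A)^+v)$), which your parametrization does not address; at the endpoints of the interval one of $B_{ii},B_{jj}$ vanishes and then $C(B)\succeq 0$ forces the corresponding vertex to have degree one, so for a generic edge you must hit an interior $t$ and show it exists. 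This is precisely the technical heart of the theorem and is missing. Your proposed ``cleaner variant'' is also incorrect as stated: peeling off vertices of degree at most one from a triangle-free graph does \emph{not} leave a bipartite graph (the $5$-cycle $C_5$ is triangle-free, has minimum degree $2$, and is not bipartite), so the claimed reduction to the bipartite case --- where indeed $C(A)=DAD$ for a $\pm 1$ diagonal $D$ --- does not cover the general triangle-free case. As a result, the ``if'' direction of the theorem is not established.
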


\noindent
We have the following easy result  for matrices supported by bipartite graphs.

\begin{lemma}\label{lembip}
Let $A\in \mathcal S^n$ and assume that $G(A)$ is a bipartite graph. Then, $A\in \cgpsd^n$ if and only if $A\in \cp^n$.
\end{lemma}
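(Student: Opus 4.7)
The forward implication is already contained in the chain $\cp^n\subseteq \cgpsd^n$ from (\ref{eqinclusion}), so the entire content of the lemma is the reverse implication: if $A\in\cgpsd^n$ (in fact even the weaker $A\in\dnn^n$ suffices) and $G(A)$ is bipartite, then $A\in\cp^n$. The plan is to combine positive semidefiniteness of $A$ with a diagonal $\pm 1$ signing coming from the bipartition in order to conclude $C(A)\succeq 0$, and then invoke Theorem \ref{thm:cycledetcomparisonmatrix} (which applies since a bipartite graph is triangle-free).

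Concretely, since $G(A)$ is bipartite, fix a bipartition $[n]=V_1\cup V_2$ of its vertex set such that every edge of $G(A)$ has one endpoint in $V_1$ and one in $V_2$ (isolated vertices may be placed arbitrarily in either part). Let $D\in\mathcal S^n$ be the diagonal matrix with $D_{ii}=+1$ for $i\in V_1$ and $D_{ii}=-1$ for $i\in V_2$. Then I would check, entry by entry, that
\[
(DAD)_{ij} \;=\; D_{ii}\,A_{ij}\,D_{jj} \;=\; C(A)_{ij}
\]
for all $i,j\in[n]$: for $i=j$ we have $D_{ii}^2=1$ giving $A_{ii}=C(A)_{ii}$; for $i\ne j$ with $\{i,j\}\in E(G(A))$ the two endpoints lie in different parts so $D_{ii}D_{jj}=-1$, giving $(DAD)_{ij}=-A_{ij}=C(A)_{ij}$; and for $i\ne j$ with $\{i,j\}\notin E(G(A))$ we have $A_{ij}=0=C(A)_{ij}$, regardless of the signs.

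Since $A\in\cgpsd^n\subseteq\mathcal S^n_+$, we have $A\succeq 0$, and congruence by the invertible matrix $D$ preserves positive semidefiniteness; hence $C(A)=DAD\succeq 0$. The support graph $G(A)$ is bipartite and therefore triangle-free, so Theorem~\ref{thm:cycledetcomparisonmatrix} applies and yields $A\in\cp^n$, as required.

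There is no real obstacle here: once one notices the signing trick, the argument is a one-line reduction to Theorem~\ref{thm:cycledetcomparisonmatrix}. The only point worth spelling out carefully is that the identity $DAD=C(A)$ really uses bipartiteness of the \emph{support} graph (rather than of the whole $K_n$), so that the zero off-diagonal entries of $A$ and $C(A)$ match automatically and one only has to control the signs on actual edges.
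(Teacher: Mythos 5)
Your proof is correct and is essentially the same as the paper's: the paper also fixes a bipartition $U\cup W$ and observes that flipping signs of the Gram factors on one side turns a Gram representation of $A$ into one of $C(A)$, which shows $C(A)\succeq 0$ and lets it invoke Theorem~\ref{thm:cycledetcomparisonmatrix}. Phrasing this as the congruence $C(A)=DAD$ with a diagonal $\pm1$ matrix $D$ is just a repackaging of that same sign-flip, and your remark that only $A\succeq 0$ (together with $A\ge 0$, both guaranteed by $A\in\dnn^n$) is needed is accurate and also implicit in the paper's argument.
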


\begin{proof}
Assume $A\in \cgpsd^n$; we show that $A\in \cp^n$ (the reverse implication holds trivially). Say, $X_1,\ldots,X_n\in \mathcal S^d_+$ form a Gram representation of $A$.
As $G(A)$ is bipartite, consider a bipartition of its vertex set as $U\cup W$ so that all edges of $G(A)$ are of the form $\{i,j\}$ with $i\in U$ and $j\in W$.
Now, observe that the matrices $X_i$ for $i\in U$, and $-X_j$ for $j\in W$ form a Gram representation of the comparison matrix $C(A)$. This shows that $C(A)\succeq 0$ and, in view of Theorem~\ref{thm:cycledetcomparisonmatrix}, $A\in \cp^n$.
 \end{proof}

\noindent
The above result  also follows from the known characterization of {\em completely positive graphs}. Recall that a graph $G$ is  {\em completely positive} if every doubly nonnegative matrix with support $G$ is completely positive. 
Therefore, for any matrix $A$,  $$\text{if } G(A) \text{ is completely positive then: } A\in \dnn^n\Longleftrightarrow A\in \cgpsd^n\Longleftrightarrow A\in \cp^n.$$
Kogan and Berman \cite{Kogan:1993} show that a graph $G$ is completely positive if and only if it does not contain an odd cycle of length at least 5 as a subgraph.
In particular, any bipartite graph is completely positive. 
Moreover, odd cycles (of length at least 5) are not completely positive graphs. For example, the above mentioned matrix $W=M((\sqrt 5-1)/2,0)$ 
(which has the 5-cycle as support)
is doubly nonnegative, but not completely positive since its inner product with the Horn matrix is negative:
$\langle W,H\rangle = 5(2-\sqrt 5)<0$.

\noindent
We will also use the following elementary result about psd matrices.

\begin{lemma}\label{prop:orthogblockpsd}
Let $A$ and $B$ be positive semidefinite matrices with block-form:
$$ A = \begin{pmatrix}
A_1 & A_2 \\ A^T_2 & A_3 \end{pmatrix}\ \ \text{ and } \ \ 
B = \begin{pmatrix}
B_1 & B_2 \\ B^T_2 & B_3
\end{pmatrix},
$$
where $A_i$ and $B_i$ have the same dimension.
 If $\langle A, B \rangle = 0$ then  $\langle A_1,B_1 \rangle = \langle A_3,B_3 \rangle = - \langle A_2,B_2 \rangle$.
\end{lemma}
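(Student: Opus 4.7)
My plan is to exploit the elementary fact (recalled in the preliminaries) that for two positive semidefinite matrices $A,B$ one has $\langle A,B\rangle = 0$ if and only if $AB = 0$. This upgrades the scalar orthogonality assumption into a matrix equation which I can then read off block by block.

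Concretely, I would multiply out $AB$ in block form and extract its two diagonal blocks, obtaining the two identities
\[
A_1 B_1 + A_2 B_2^T \;=\; 0 \qquad\text{and}\qquad A_2^T B_2 + A_3 B_3 \;=\; 0.
\]
Taking the trace in each of these equalities kills the off-diagonal blocks and turns them into scalar identities. Since $A_1,B_1,A_3,B_3$ are symmetric, $\Tr(A_1 B_1) = \langle A_1,B_1\rangle$ and $\Tr(A_3 B_3) = \langle A_3,B_3\rangle$. The off-diagonal cross term unfolds as
\[
\Tr(A_2 B_2^T) \;=\; \sum_{i,j}(A_2)_{ij}(B_2)_{ij} \;=\; \langle A_2,B_2\rangle,
\]
and identically $\Tr(A_2^T B_2) = \langle A_2,B_2\rangle$. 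Combining these two trace identities yields
\[
\langle A_1,B_1\rangle \;=\; -\langle A_2,B_2\rangle \;=\; \langle A_3,B_3\rangle,
\]
which is precisely the conclusion.

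There is essentially no obstacle: the whole argument rests on the single conversion from $\langle A,B\rangle = 0$ to $AB=0$, after which the conclusion is a one-line block-matrix computation. As a sanity check, summing the three quantities with the appropriate multiplicities recovers $\langle A,B\rangle = \langle A_1,B_1\rangle + 2\langle A_2,B_2\rangle + \langle A_3,B_3\rangle = 0$, consistent with the hypothesis.
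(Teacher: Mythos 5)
Your proof is correct and follows essentially the same route as the paper: convert $\langle A,B\rangle=0$ to $AB=0$ using positive semidefiniteness, read off the two diagonal blocks of $AB$, and take traces. The extra sanity-check at the end is a nice touch but not part of the paper's argument.
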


\begin{proof}
As $A,B\succeq 0$, $\langle A,B \rangle = 0$ implies  $A B = 0$ and thus  $A_1B_1 + A_2 B_2^T=0$ and $A_2^T B_2+ A_3 B_3 = 0$. Taking the trace we obtain the desired identities.
 \end{proof}

We can now characterize the completely psd matrices supported by a cycle.

\begin{theorem}\label{lem:oddcycle}
Let $A\in \mathcal S^n$ and assume that $G(A)$ is a cycle.
Then, $A \in \cgpsd^n$ if and only if $A \in \cp^n$. 
\end{theorem}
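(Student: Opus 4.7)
I would first dispose of the trivial direction: $\cp^n\subseteq\cgpsd^n$ because any nonnegative scalar is a $1\times 1$ positive semidefinite matrix, so a nonnegative Gram representation of $A$ is already a psd Gram representation. For the converse, assume $A\in\cgpsd^n$ with $G(A)=C_n$. For $n\le 4$ the conclusion is automatic since $\cgpsd^n=\cp^n=\dnn^n$, so take $n\ge 5$. Then $C_n$ is triangle-free, and by Theorem~\ref{thm:cycledetcomparisonmatrix} it suffices to prove that the comparison matrix $C(A)$ is positive semidefinite. For even $n$ the cycle $C_n$ is bipartite, so Lemma~\ref{lembip} finishes the job at once; the real content is the case of odd $n\ge 5$.

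Fix a Gram representation $X_1,\dots,X_n\in\mathcal{S}^d_+$ of $A$. From $\langle X_i,X_j\rangle=0$ for nonadjacent $i,j$ together with $X_i,X_j\succeq 0$ one gets $X_iX_j=0$, so the ranges $V_i:=\mathrm{range}(X_i)$ are mutually orthogonal whenever $i,j$ are not adjacent on the cycle. Since the cross terms $\langle X_i,X_j\rangle$ with $i\not\sim j$ drop out, a direct computation yields
$$x^T C(A)x \;=\; 2\sum_i x_i^2\,\|X_i\|^2 \;-\; \Big\|\sum_i x_iX_i\Big\|^2$$
for every $x\in\R^n$, so the task reduces to establishing the inequality $\|\sum_i x_iX_i\|^2\le 2\sum_i x_i^2\|X_i\|^2$. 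Equivalently, I will construct nonnegative vectors $z_1,\dots,z_n\in\R^D_+$ whose Gram matrix is $A$, which directly exhibits $A\in\cp^n$.

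The construction introduces, for each edge $e=\{i,i+1\}$ of $C_n$, a coordinate block of $\R^D$ on which only $z_i$ and $z_{i+1}$ are nonzero, their inner product equal to $A_{i,i+1}$; an additional ``private'' coordinate per vertex absorbs the remaining diagonal slack. The components of $z_i$ on its two incident edge blocks are read off from a decomposition $X_i=X_i^L+X_i^R+X_i^0$ of the psd matrix $X_i$ into pieces associated respectively with the two incident edges and a psd free piece whose range lies in $(V_{i-1}+V_{i+1})^\perp$. The block identities supplied by Lemma~\ref{prop:orthogblockpsd}, applied to the nonadjacent pairs $(X_{i-1},X_{i+1})$ in a basis adapted to $V_i$, are what allow us to realize such a decomposition consistently; the free piece $X_i^0$ naturally populates the private coordinate of $z_i$.

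The main obstacle, and the point where the argument genuinely uses $A\in\cgpsd^n$ rather than just $A\in\dnn^n$, is that for odd cycles the projections $P_{V_i}(V_{i-1})$ and $P_{V_i}(V_{i+1})$ of the neighbouring subspaces onto $V_i$ may intersect, so the splitting of $V_i$ into ``left'' and ``right'' parts is not along orthogonal summands. Showing that the shared piece can nonetheless be consistently assigned around the odd cycle, without creating a frustration, is the delicate step, and it crucially uses the full positive semidefiniteness of each $X_i$. This fits with the Frenkel--Weiner matrix $W\in\dnn^5$ on $C_5$, which is not completely positive and (by the theorem) also not completely psd: $W$ is precisely the obstruction showing that such a consistent assignment can fail for a doubly nonnegative but non-$\cgpsd$ matrix.
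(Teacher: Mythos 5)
Your reduction to proving $C(A)\succeq 0$ via Theorem~\ref{thm:cycledetcomparisonmatrix} (the cycle being triangle-free for $n\ge 5$) and your handling of the even case via bipartiteness and Lemma~\ref{lembip} coincide with the paper's proof, and the identity
$x^T C(A)x = 2\sum_i x_i^2\|X_i\|^2 - \|\sum_i x_iX_i\|^2$
is correct. The problem is the odd case. Having reduced to showing $C(A)\succeq 0$, you pivot to constructing nonnegative vectors $z_1,\dots,z_n$ realizing $A\in\cp^n$, based on a decomposition $X_i = X_i^L + X_i^R + X_i^0$ adapted to the two incident edges and a private block. You then explicitly acknowledge that the heart of the matter — that for odd cycles the ``left'' and ``right'' pieces of $V_i$ can overlap, and that the overlap can be consistently apportioned around the cycle ``without creating a frustration'' — is a ``delicate step'', but you never carry it out. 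That delicate step \emph{is} the theorem: it is exactly where $A\in\cgpsd^n$ must be used rather than merely $A\in\dnn^n$, and asserting that it works ``crucially using positive semidefiniteness'' is not a proof. Indeed, it is not even clear that a PSD splitting $X_i = X_i^L + X_i^R + X_i^0$ with the required inner-product constraints exists; establishing this is essentially equivalent to the claim being proved.

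For comparison, the paper avoids constructing nonnegative vectors for $A$ and instead builds an ordinary Gram representation of $C(A)$ directly from the psd matrices $X^1,\dots,X^n$. For $i\le n-1$ one simply alternates signs, $Y^i=(-1)^i X^i$ (which already handles all pairs among the first $n-1$ indices). The genuine work is to choose $Y^n$. To do this the paper first shows, by tracking the zeros of $A$ along the cycle, that the $X^k$ have a nested ``staircase'' block structure indexed by sets $I_1,\dots,I_{n-2}$, with $X^k$ supported on $I_{k-1}\cup I_k$ for $2\le k\le n-3$, $X^{n-2}$ and $X^{n-1}$ supported on $I_{n-3}\cup I_{n-2}$, and $X^n$ unrestricted. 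Then $Y^n$ is $X^n$ with the sign of exactly two blocks flipped (the $(n-3,n-2)$ and $(n-2,n-2)$ blocks), and Lemma~\ref{prop:orthogblockpsd} applied to the orthogonal pair $(X^{n-2},X^n)$ gives the one non-obvious inner-product identity needed to check $\langle Y^{n-2},Y^n\rangle=0$. This block bookkeeping is precisely the content that your sketch leaves out. Your intuition about which lemma is relevant (Lemma~\ref{prop:orthogblockpsd}) and about the role of $\cgpsd$ versus $\dnn$ is sound, but as written the proposal has a genuine gap where the main argument should be.
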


\begin{proof}
One direction is obvious since $\cp^n\subseteq \cgpsd^n$. Assume now that $A\in \cgpsd^n$ with $G(A)=C_n$ and let $X^1,\ldots, X^n\in \mathcal S^d_+$ be a psd  Gram representation  of $A$.
We will show that  $A\in \cp^n$. We consider only the non-trivial case when $n \geq 5$.
In view of Theorem \ref{thm:cycledetcomparisonmatrix}, it suffices to show that the comparison matrix $C(A)$ is positive semidefinite.

We first consider the easy case when  $n$ is even. 
Then, 
 the matrices $Y^1=-X^1,Y^2=X^2, Y^3=-X^3, Y^4=X^4, \ldots, Y^{n-1}=-X^{n-1}, Y^n=X^n$ form a Gram representation of $C(A)$, thus showing that $C(A)\succeq 0$ and concluding the proof. 
Notice that, since even cycles  are bipartite graphs, this case follows from Lemma \ref{lembip}. 

%

Now suppose that $n$ is odd. As we will see, in order to  construct a Gram representation of $C(A)$, we can choose the same  matrices $Y^1,\ldots, Y^{n-1}$  as above but we need to look in  more detail into the structure of the $X^i$'s in order to be able to tell how to define the last matrix $Y^n$.

For this, we now show that  the matrices $X^1,\ldots, X^n$ can be assumed to be $(n-2)\times (n-2)$ block-matrices, where we denote the blocks of $X^k$ as 
$X^k_{rs}$ for $r,s\in [n-2]$ (with $X^k_{sr}=(X^k_{rs})^T$) and the index sets of the blocks as $I_1,\ldots, I_{n-2}$.
Indeed, without loss of generality we can assume that $X^1=\left(\begin{matrix} X^1_{11}& 0\cr 0 & 0 \end{matrix}\right)$ where $X^1_{11}$ is positive definite and the index set of $X^1_{11}$ defines the index set $I_1$ of the first block.
Next, $X^2$ has the form $\left(\begin{matrix} X^2_{11} & X^2_{12} & 0\cr X^2_{21} & X^2_{22} & 0 \cr 0 & 0 & 0\end{matrix}\right)$, where 
$X^2_{22}\succ 0$ 
 and its index set defines the index set $I_2$ of the second block.
Next, we can write $X^3=\left(\begin{matrix} X^3_{11} & X^3_{12} & X^3_{13} & 0\cr 
X^3_{21} & X^3_{22} & X^3_{23} & 0\cr
X^3_{31} & X^3_{32} & X^3_{33} & 0\cr
0 & 0 & 0 & 0 \cr\end{matrix}\right)$, where $X^3_{33} \succ 0$ and $I_3$ is the index set of $X^3_{33}$. Hence $X^3$ has its blocks indexed by $I_1,I_2,I_3$ and $[d] \backslash (I_1 \cup I_2 \cup I_3)$.
Iteratively, for each $k\in \{2,3,\ldots, n-3\}$, the matrix $X^k$ has blocks $X^{k}_{r,s}$ for $r,s \in [k]$ with $X^{k}_{k,k} \succ 0$ and it has zero entries outside of these blocks.
The index sets of the blocks $X^k_{kk}$ for $1 \le k \le n-3$ define the sets $I_1,I_2,\dots I_{n-3}$ and the set $I_{n-2} := [d] \backslash (I_1 \cup I_2 \dots \cup I_{n-3})$ collects all the remaining indices.

By looking at the zero-pattern of the matrix $A$, we now show some structural properties of the $X^{k}$ matrices and that 
 each set $I_{k}$ is non-empty. 
As $A_{12} \neq 0$, we know that $I_{1} \neq \emptyset$.
Since $A_{13}=0$ we can conclude  that $X^3_{11}=0$ (and thus 
$X^3_{12}=X^3_{13}=0$) and that the only non-zero blocks of $X^{3}$ are $X^3_{22}, X^3_{23}, X^3_{32}$ and $X^3_{33}$.
Moreover, as $A_{23} \neq 0$ we get that $I_{2} \neq \emptyset$.
With the same reasoning, for each $k\in \{2,3,\ldots, n-3\}$, as $A_{k',k} = 0$ for all $k' \in [k-2]$ we have that all blocks of the matrix $X^k$ are equal to zero except its blocks
$X^k_{k-1,k-1}$, $X^k_{k-1,k}$, $X^k_{k,k-1}$  and $X^k_{kk}$.
Moreover the fact that, for all $k\in [n-2]$, $A_{k,k+1} \neq 0$
 implies that the index set $I_{k}$ is non-empty. 
Additionally, using the fact that $A_{n-2,k} =0$ for each $k \in \{1,\dots,n-4\}$ we obtain that each block $X^{n-2}_{kk}$ is equal to zero.
Similarly, $X^{n-1}_{kk}$ is the zero matrix for every $k \in \{1,\dots,n-3\}$ as $A_{n-1,k} =0$.
For the matrix $X^n$ we cannot make any consideration on the presence of zero blocks.


We now indicate how to construct the (non-symmetric) matrix $Y^n$ from $X^n$: we just  change signs to its two blocks $X^n_{n-3,n-2}$ and $X^n_{n-2, n-2}$. In other words, we let $Y^n$ be the $(n-2)\times (n-2)$ block matrix with blocks $Y^n_{n-3,n-2}=-X^n_{n-3,n-2},$ $Y^n_{n-2,n-2}=-X^n_{n-2,n-2}$
and $Y^n_{rs}=X^n_{rs}$ for all other blocks. Let us stress that in particular we do not change the sign of the block $X^n_{n-2,n-3}$.
As in the case when $n$ is even, for any $1\le i\le n-1$, we set $Y^i=-X^i$ for odd $i$  and $Y^i=X^i$ for even $i$.

We claim that $Y^1,\ldots,Y^n$ form a Gram representation of the comparison matrix $C(A)$.
It is clear that $\langle Y^i,Y^j\rangle = C(A)_{ij}$ for all $i,j\in [n-1]$ and that   $\langle Y^1,Y^n \rangle =-A_{1n}=C(A)_{1n}$ and $\langle Y^i,Y^n \rangle=0$ for $2\le i\le n-3$ (since the blocks indexed by $[n-3]$ in $Y^n$ are the same as in $X^n$ and each block $Y^i_{r, n-2}$ is equal to 0).
Moreover, $\langle Y^n,Y^n\rangle=\langle X^n,X^n\rangle = C(A)_{nn}$  and
$\langle Y^{n-1},Y^n \rangle = -A_{n-1,n}=C(A)_{n-1,n}$.
Finally, we use Lemma \ref{prop:orthogblockpsd} to verify that $\langle Y^{n-2},Y^n \rangle =0$.
Indeed, we have that 
$$0= A_{n-2,n}=\langle X^{n-2},X^n\rangle = 
\langle
 \left(\begin{matrix} X^{n-2}_{n-3,n-3} &  X^{n-2}_{n-3,n-2}\cr X^{n-2}_{n-2, n-3} & X^{n-2}_{n-2,n-2}\end{matrix}\right),
\left(\begin{matrix} X^{n}_{n-3,n-3} &  X^{n}_{n-3,n-2}\cr X^{n}_{n-2, n-3} & X^{n}_{n-2,n-2}\end{matrix}\right)
\rangle$$
which, by  Lemma  \ref{prop:orthogblockpsd}, implies that 
$\langle X^{n-2}_{n-3,n-3}, X^{n}_{n-3,n-3}\rangle = \langle X^{n-2}_{n-2,n-2},  X^{n}_{n-2,n-2}\rangle.$ 
Therefore, $$\langle Y^{n-2},Y^{n}\rangle = 
 \langle
 \left(\begin{matrix} -X^{n-2}_{n-3,n-3} &  -X^{n-2}_{n-3,n-2}\cr -X^{n-2}_{n-2, n-3} & -X^{n-2}_{n-2,n-2}\end{matrix}\right),
\left(\begin{matrix} X^{n}_{n-3,n-3} & - X^{n}_{n-3,n-2}\cr X^{n}_{n-2, n-3} & -X^{n}_{n-2,n-2}\end{matrix}\right)
\rangle$$
is equal to 0.
 \end{proof}

As an application of Lemma \ref{lembip} and of Theorem \ref{lem:oddcycle},
we obtain:

\begin{lemma}
If $G$  is bipartite or an odd cycle, then $\vartheta ^{\cgpsd}(\overline G)=\alpha(\overline G)$ and $\Theta^{\cgpsd}(\overline G)=\chi_f(\overline G)$.
\end{lemma}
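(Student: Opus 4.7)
The plan is to exploit the cone inclusions $\cp \subseteq \cgpsd \subseteq \dnn$ together with the preceding structural results. By definition, a matrix $X$ feasible for $\vartheta^{\cgpsd}(\overline G)$ satisfies $X_{uv}=0$ whenever $\{u,v\}\in E(\overline G)$, so the support graph $G(X)$ is a subgraph of $G$. The same observation applies to any $Z$ feasible for $\Theta^{\cgpsd}(\overline G)$ (noting that the equality constraint is $Z_{uv}=0$ on edges of the graph appearing as argument, here $\overline G$, so again $G(Z)\subseteq E(G)$).

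Since $\cp\subseteq \cgpsd$ and the programs in (\ref{eqthetas-K}) are monotone in the cone (with respect to maximization vs.\ minimization), relation (\ref{eqlinktheta}) immediately gives the inequalities
\[
\alpha(\overline G)=\vartheta^{\cp}(\overline G)\le \vartheta^{\cgpsd}(\overline G)\quad\text{and}\quad \Theta^{\cgpsd}(\overline G)\le \Theta^{\cp}(\overline G)=\chi_f(\overline G).
\]
So it remains to prove the reverse inequalities, and for this it suffices to show that every feasible matrix for the $\cgpsd$-version of either program actually lies in $\cp$. The desired equalities then follow by substituting such a matrix into the corresponding $\cp$-program.

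To establish this, fix a feasible $X$ (or $Z$); as noted, its support $G(X)$ is a subgraph of $G$. If $G$ is bipartite, then $G(X)$ is bipartite too, and Lemma \ref{lembip} directly yields $X\in \cp$. If instead $G$ is an odd cycle $C_n$, then any subgraph of $G$ is either a disjoint union of paths (a forest, and hence bipartite, so Lemma \ref{lembip} applies) or the full cycle $C_n$ itself, in which case Theorem \ref{lem:oddcycle} gives $X\in \cp$. Thus in all cases $X\in \cp^n$.

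There is no real obstacle here: the preceding Lemma \ref{lembip} and Theorem \ref{lem:oddcycle} do all the heavy lifting, and the only mild care needed is the routine observation that a proper subgraph of an odd cycle is a forest, hence bipartite, so that the bipartite case of Lemma \ref{lembip} can absorb all situations except when the support is exactly the full odd cycle. The argument then concludes by the reformulations of $\alpha(\overline G)$ and $\chi_f(\overline G)$ as $\vartheta^{\cp}(\overline G)$ and $\Theta^{\cp}(\overline G)$ recorded in (\ref{eqlinktheta}).
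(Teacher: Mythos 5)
Your proposal is correct and follows essentially the same route as the paper: observe that any matrix feasible for the $\cgpsd$-program has support contained in $G$, hence is bipartite or an odd cycle, invoke Lemma \ref{lembip} and Theorem \ref{lem:oddcycle} to place it in $\cp$, and conclude via the $\cp$-reformulations in (\ref{eqlinktheta}). Your extra remark that a proper subgraph of an odd cycle is a forest (hence bipartite) makes explicit a small point the paper leaves implicit, but the argument is the same.
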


\begin{proof}
It suffices to show $\vartheta^{\cgpsd}(\overline G)\le \alpha(\overline G)$ (as the reverse inequality is clear).
For this pick a matrix $X\in \cgpsd$ feasible for the program defining $\vartheta^{\cgpsd}(\overline G)$. Then the support of $X$ is contained in $ G$ and thus is bipartite or an odd cycle.
By Lemma \ref{lembip} and Theorem \ref{lem:oddcycle},  $X$ is completely positive. Using Theorem \ref{theodeK}, this implies $\alpha(\overline G)\ge \langle J,X\rangle$ and thus
$\alpha(\overline G)\ge \vartheta^{\cgpsd}(\overline G)$.
The equality $\Theta^{\cgpsd}(\overline G)=\chi_f(\overline G)$ follows analogously using 
Theorem \ref{theoDR}.
\end{proof}

Using Theorem \ref{lem:oddcycle}, we can conclude that the matrix $W=M((\sqrt 5-1)/2,0)$  is not completely psd (since its support graph is $C_5$ and $W\not\in \cp^5$).
Moreover, using a result from Hamilton-Jester and Li \cite{Hamilton:1996} (Theorem \ref{theoHam} below) we can construct in Lemma \ref{lemclass} a class of instances in $\dnn^5\setminus \cgpsd^5$.

\begin{theorem}\cite{Hamilton:1996}\label{theoHam}
(i) Assume $n\ge 5$ is odd and consider a matrix $A \in \dnn^n$ with rank $n-2$. Then, $A$ lies on an extreme ray of $\dnn^n$ if and only if $G(A)=C_n$.
(ii) Moreover, if $A$ lies on an extreme ray of $\dnn^5$, then $A$ has rank 1 or 3.
\end{theorem}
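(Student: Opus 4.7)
The plan is to analyze extreme rays via the face structure of $\dnn^n$. Recall that $A \in \dnn^n$ lies on an extreme ray iff the only perturbations $E \in \mathcal S^n$ with $A \pm tE \in \dnn^n$ for sufficiently small $t > 0$ are scalar multiples of $A$. Write $A = VV^T$ with $V \in \R^{n \times r}$ of full column rank $r = \rank(A)$. Locally, PSD is preserved precisely when $E = VSV^T$ for some $S \in \mathcal S^r$, while entrywise nonnegativity forces $E_{ij} = 0$ whenever $A_{ij} = 0$. Letting $V_i^T$ denote the $i$-th row of $V$, extremality is thus equivalent to the homogeneous system
\[
V_i^T S V_j = 0 \quad \text{for all } i \ne j \text{ with } A_{ij} = 0,
\]
admitting only the one-dimensional solution $\{cI_r : c \in \R\}$ in $\mathcal S^r$.

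For part~(i), set $r = n-2$, so $\dim \mathcal S^r = (n-1)(n-2)/2$. If $G(A) = C_n$, there are exactly $n$ nonzero off-diagonal entries, so the system has $n(n-1)/2 - n = (n-1)(n-2)/2 - 1$ equations. Extremality then amounts to these being linearly independent modulo the direction $I_r$; I would verify this by labelling the vertices along the cycle and using the rank-$(n-2)$ condition on $V$ to row-reduce the constraint matrix explicitly, exploiting the ``almost tridiagonal'' pattern imposed by the cyclic support. Conversely, if $A$ is extreme of rank $n-2$, the dimension count forces at least $(n-1)(n-2)/2 - 1 = n(n-3)/2$ independent constraints, hence at most $n$ nonzero off-diagonal entries, i.e. $G(A)$ has at most $n$ edges. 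To pin down $G(A) = C_n$ one then combines connectedness (a disconnected $A$ is a direct sum and cannot be extreme unless all but one block vanishes) with the rank hypothesis, which rules out trees (too few edges to produce a rank-$(n-2)$ nonnegative PSD matrix) and other $\le n$-edge graphs whose constraint systems fail to have the required rank.

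For part~(ii), with $n = 5$, I would rule out ranks $5$, $4$, and $2$ using the same count. If $\rank(A) = 5$, the PSD constraint is vacuous locally, so the perturbation space has dimension equal to the number of nonzero entries (above and on the diagonal) of $A$, which cannot be one-dimensional if $\rank(A) = 5$. If $\rank(A) = 4$, then $\dim \mathcal S^4 = 10 = \binom{5}{2}$, so one would need $9$ independent constraints, forcing at most one nonzero off-diagonal entry, which is incompatible with rank $4$. If $\rank(A) = 2$, then $\dim \mathcal S^2 = 3$, so $2$ independent constraints are required; a short case analysis of the possible zero patterns of a rank-$2$ nonnegative PSD matrix shows the only extreme candidates collapse to rank~$1$. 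Rank $1$ matrices $vv^T$ with $v \in \R^5_+$ are automatically extreme (even in the ambient cone $\cpsd^5$), and rank $3 = n-2$ is handled by part~(i).

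The main obstacle is the converse in~(i): the counting bound gives only ``at most $n$ edges'' for $G(A)$, and the combinatorial content of the theorem sits in ruling out every other $\le n$-edge configuration. This requires carefully combining the rank-$(n-2)$ hypothesis (the $V_i$ span $\R^{n-2}$) with a reduction-to-blocks argument for disconnected supports and a direct contradiction for acyclic or short-cycle supports, where the constraint system has a strictly larger solution space than~$\R \cdot I_{n-2}$.
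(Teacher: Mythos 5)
The paper does not prove this statement: it is Theorem~\ref{theoHam} cited from Hamilton-Jester and Li \cite{Hamilton:1996}, and no proof is reproduced in the text. So there is no ``paper's own proof'' to compare against; I will assess your argument on its own terms.

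Your framework is the right one: parametrize the minimal face of $\mathcal S^n_+$ containing $A=VV^T$ by $\{VSV^T: S\in\mathcal S^r\}$, impose the zero-pattern equations coming from nonnegativity, and equate extremality with the solution space being exactly $\R\cdot I_r$. The dimension count $|E(G(A))|\le n$ when $r=n-2$ is correct and is a useful necessary condition. However, as you yourself flag, the proposal does not actually prove either direction of (i), and the case analysis in (ii) has holes.

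For the forward implication of (i) (cycle support implies extreme), linear independence of the $(n-1)(n-2)/2 - 1$ constraints modulo $I_r$ is precisely the nontrivial content of the theorem, and ``row-reduce the almost tridiagonal constraint matrix'' is a plan, not an argument; it is not obvious that the constraints are independent for \emph{every} rank-$(n-2)$ DNN matrix with cyclic support. For the converse, the bound $|E|\le n$ together with connectedness (direct sums of several nonzero blocks are never extreme) reduces to trees and unicyclic graphs, but your sketch only dismisses trees. A unicyclic graph whose unique cycle is odd of length $5\le k<n$ is not a completely positive graph, so the ``extreme DNN with CP support must be rank $1$'' shortcut is unavailable there, and the rank count alone does not exclude such supports. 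This subcase is the real combinatorial work in the converse and is missing. In (ii), the rank-$4$ step asserts that a single nonzero off-diagonal is ``incompatible with rank~$4$,'' which is false as stated (a rank-$1$ $2\times 2$ block plus three positive diagonal entries has rank $4$ and support with one edge); the correct way to finish is again the direct-sum argument, which you do not invoke. The rank-$2$ case is only asserted; the clean route is to note that any rank-$2$ DNN matrix is automatically completely positive (Gram vectors in $\R^2$ with pairwise nonnegative inner products lie in a rotated quadrant), hence is a sum of at least two rank-$1$ DNN matrices and cannot be extreme unless rank $1$. In summary: the perturbation/dimension framework is sound and the reductions are on the right track, but both directions of (i) and the rank-$2$, rank-$4$ exclusions in (ii) contain genuine gaps that would need to be filled before this qualifies as a proof.
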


\begin{lemma}\label{lemclass}
For odd $n\ge 5$, any matrix $A\in\mathcal S^n$ with support $G(A)=C_n$ and 
with rank $n-2$ is not completely positive semidefinite.
Moreover, for $n=5$, if $A\in\mathcal S^5$ lies on an extreme ray of $\dnn^5$, then $A$  is completely positive semidefinite if and only if $A$ is completely positive.
\end{lemma}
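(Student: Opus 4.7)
The plan is to combine Theorem~\ref{lem:oddcycle} with the structural information about extreme rays of $\dnn^n$ provided by Theorem~\ref{theoHam}, and to argue by contradiction for part~(i) and by a case split for part~(ii).

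For part~(i), I would first apply Theorem~\ref{theoHam}(i): under the hypotheses that $n\ge 5$ is odd, $A\in\dnn^n$ has rank $n-2$, and $G(A)=C_n$, the matrix $A$ must lie on an extreme ray of $\dnn^n$. Suppose for contradiction that $A\in\cgpsd^n$. Since $G(A)=C_n$, Theorem~\ref{lem:oddcycle} then forces $A\in\cp^n$. Now I would promote extremality from $\dnn^n$ to $\cp^n$ via the standard one-line argument: if $A=A_1+A_2$ with $A_1,A_2\in\cp^n\subseteq\dnn^n$, then extremality of $A$ in $\dnn^n$ gives $A_1,A_2\in\R_{\ge 0}\cdot A$, so $A$ lies on an extreme ray of $\cp^n$ as well. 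But every extreme ray of $\cp^n$ is generated by a rank-one matrix $yy^T$ with $y\in\R^n_+$, and this contradicts $\rank(A)=n-2\ge 3$.

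For part~(ii), since $\cp^5\subseteq\cgpsd^5$ the only nontrivial direction is the implication $A\in\cgpsd^5\Rightarrow A\in\cp^5$. Assuming $A$ lies on an extreme ray of $\dnn^5$, I would invoke Theorem~\ref{theoHam}(ii) to split into two cases according to the rank of $A$. If $\rank(A)=1$, then $A=xx^T$ for some $x\in\R^5$; the requirement that $A$ have nonnegative entries forces $x_ix_j\ge 0$ for all $i,j$, so all coordinates of $x$ share a common sign and (after possibly negating $x$) we obtain $x\in\R^5_+$, giving $A\in\cp^5$ immediately. If $\rank(A)=3=n-2$, then Theorem~\ref{theoHam}(i) applied to $n=5$ yields $G(A)=C_5$, and Theorem~\ref{lem:oddcycle} then delivers the desired equivalence $A\in\cgpsd^5\Leftrightarrow A\in\cp^5$.

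No genuine obstacle is anticipated, as the proof assembles previously established facts. The only subtlety worth explicit mention is the inheritance of extremality from $\dnn^n$ down to $\cp^n$ in part~(i), but this is an elementary consequence of the set-theoretic inclusion $\cp^n\subseteq\dnn^n$. The overall strategy illustrates nicely that the chain $\cp^n\subseteq\cgpsd^n\subseteq\dnn^n$ collapses on a rich family of boundary matrices, namely those whose support is an odd cycle.
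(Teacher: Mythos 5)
Your proof is correct and takes essentially the same route as the paper: both parts combine Theorem~\ref{theoHam} with Theorem~\ref{lem:oddcycle} and the fact that extreme rays of $\cp^n$ are rank one (with extremality passed down from $\dnn^n$). The only cosmetic difference is that in part~(ii) the paper argues by contrapositive and then cites part~(i), whereas you split directly by rank and in the rank-$3$ case cite Theorem~\ref{lem:oddcycle}; these are interchangeable.
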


\begin{proof} 
Suppose that a matrix $A \in \cgpsd^{n}$ has support $G(A) = C_{n}$ and rank $n-2$, for some odd  $n \ge 5$.
By Theorem \ref{lem:oddcycle} we know that $A \in \cp^{n}$. Moreover, from Theorem \ref{theoHam}(i), $A$ lies on an extreme ray of  $\dnn^n$ and thus also of $\cp^n$.
Since the extreme rays of $\cp$ are rank 1 matrices, we get the contradiction: $1=\rank A=n-2$. Hence, $A \notin \cgpsd^{n}$.

 For the second claim, assume that $A$ lies on an extreme ray of $\dnn^{5}$. We show that if $A\not\in\cp^5$ then $A\not\in \cgpsd^5$ (the reverse implication is clear).
By Theorem \ref{theoHam}(ii), any matrix on an extreme ray of $\dnn^5$ has rank 1 or 3.
 Recall that a doubly nonnegative matrix which has rank 1 is  completely positive. Hence, if $A \notin \cp^{5}$ then $A$ must have rank 3 and support $G(A) = C_{5}$ (by Theorem \ref{theoHam}(i)).
 Now using the first part of the lemma we can conclude that $A\not\in \cgpsd^5$.
 \end{proof}

We now show that the matrix $W=M((\sqrt 5-1)/2,0)$ does not belong to the closure of $\cgpsd^5$. For this we use results from \cite{BLP} and \cite{Frenkel:2014}, dealing with Gram representations by positive elements in a  general  {\em finite von Neumann algebra} (an infinite dimensional analogue of Gram representations by positive semidefinite matrices). 
For our treatment, we only need to know that a finite von Neumann algebra $\mathcal N$  is  equipped 
with a trace $\tau$ (an analogue of the usual matrix trace) which satisfies the following properties:  for any $A,B\in \mathcal N$, 
$\tau(AB)=\tau(BA)$; 
if $A$ is positive then $\tau (A)\ge 0$ with equality if and only if $A=0$;
and
\begin{equation}\label{eqtau2}
\text{if } A,B \text{ are positive and  }  \tau (AB)=0   \text{ then }  AB=0.
\end{equation}
On the one hand, it is shown in \cite{BLP} that  there exists a finite von Neumann algebra $\mathcal M$ (with trace $\tau$) with the property that  any matrix $X$ lying in the closure of $\cgpsd^n$ admits a Gram representation by positive elements of $\mathcal M$, i.e., $X=(\tau(A_iA_j))_{i,j=1}^n$ for some positive $A_1,\ldots,A_n\in\mathcal M$.
On the other hand,  it is shown in \cite{Frenkel:2014}  that the matrix $W$ does not admit a Gram representation by positive elements in any finite von Neumann algebra, see Theorem \ref{theoFW} below. Hence, by combining these two results,  we can conclude that the matrix $W$ does not belong to the closure of $\cgpsd^5$.

\begin{theorem} \cite{Frenkel:2014}\label{theoFW}
Let $\mathcal N$ be a finite von Neumann algebra with trace $\tau$. For the matrix $W=M((\sqrt 5-1)/2,0)$, there do not exist positive elements $A_1,\ldots,A_5\in \mathcal N$ such that $W=(\tau(A_iA_j))_{i,j=1}^5$.
\end{theorem}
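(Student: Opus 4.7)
I plan to argue by contradiction. Suppose positive $A_1,\dots,A_5\in\mathcal N$ satisfy $W_{ij}=\tau(A_iA_j)$. The zero off-diagonal entries of $W$ occur exactly for $|i-j|\equiv 2\pmod 5$, so property \eqref{eqtau2} immediately forces $A_iA_j=0$ for each such pair.

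The crux of the argument is to extract an \emph{operator} linear relation from the rank deficiency of $W$. Since $W$ is circulant, its eigenvalues are $1+2b\cos(2\pi k/5)$, and at $k=2$ this vanishes (using $b\varphi=1$ for $\varphi=(1+\sqrt 5)/2$), so $W$ has rank $3$ with a two-dimensional kernel. Taking the real kernel vector $\lambda=(1,\beta,\alpha,\alpha,\beta)$ with $\alpha=\cos(2\pi/5)=b/2$ and $\beta=\cos(4\pi/5)=-\varphi/2$, the self-adjoint element $B=\sum_i\lambda_iA_i$ satisfies $\tau(B^2)=\lambda^T W\lambda=0$; faithfulness of $\tau$ on positive elements then forces $B=0$, which rearranges to
\begin{equation*}
A_1=\tfrac{\varphi}{2}(A_2+A_5)-\tfrac{b}{2}(A_3+A_4),
\end{equation*}
together with its four cyclic shifts. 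This passage from the scalar identity $\lambda^T W\lambda=0$ to a genuine operator identity is the step I expect to be the main obstacle: it uses essentially both the self-adjointness of the $A_i$ and the faithfulness of the trace.

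The rest is algebraic manipulation exploiting the orthogonality $A_iA_j=0$. Multiplying the displayed identity on the left by $A_1$ and killing the terms involving $A_3,A_4$ yields $A_1^2=(\varphi/2)(A_1A_2+A_1A_5)$; multiplying further on the right by $A_2$ and using $A_5A_2=0$ gives the operator identity $A_1^2A_2=(\varphi/2)A_1A_2^2$. Taking traces and invoking cyclicity of $\tau$ produces $\tau(A_1A_2A_1)=(\varphi/2)\,\tau(A_2A_1A_2)$. The identical calculation starting from the cyclic-shift relation $A_2=(\varphi/2)(A_1+A_3)-(b/2)(A_4+A_5)$ (and using $A_2A_4=A_2A_5=A_3A_1=0$) yields the mirror identity $\tau(A_2A_1A_2)=(\varphi/2)\,\tau(A_1A_2A_1)$.

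Combining the two identities gives $(1-\varphi^2/4)\,\tau(A_1A_2A_1)=0$; since $\varphi^2/4=(3+\sqrt 5)/8\ne 1$, we conclude $\tau(A_1A_2A_1)=0$. But $A_1A_2A_1=(A_2^{1/2}A_1)^{*}(A_2^{1/2}A_1)$ is positive, so by faithfulness of $\tau$ we obtain $A_1A_2A_1=0$, whence $A_2^{1/2}A_1=0$ and therefore $A_1A_2=0$. This contradicts $\tau(A_1A_2)=W_{12}=b=(\sqrt 5-1)/2>0$, completing the argument.
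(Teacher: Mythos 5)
Your proof is correct and follows the same strategy as the paper's: use $\tau$-faithfulness on the zero entries of $W$ to get $A_iA_{i+2}=0$, pass from the rank-$3$ structure of $W$ to a genuine operator linear relation among the $A_i$ (again via faithfulness of $\tau$), and multiply that relation by the $A_i$ to exploit the orthogonality. The only divergence is in the endgame: the paper arrives at $\alpha A_1^2=\alpha' A_2^2$, forces $\alpha=\alpha'$ by taking traces, and invokes uniqueness of the positive square root to conclude $A_1=A_2$ (contradicting $W_{12}\ne W_{11}$), whereas you push one step further to trace identities and the golden-ratio fact $\varphi^2/4\ne 1$ to conclude $A_1A_2=0$ (contradicting $W_{12}\ne 0$) --- a slightly longer but equally valid finish.
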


\begin{proof}
We give a  proof for completeness. Assume  that $W=(\tau(A_iA_j))_{i,j=1}^5$ for some positive $A_1,\ldots,A_5\in \mathcal N$.
Using (\ref{eqtau2}), $W_{i,i+2}=0$ implies $A_iA_{i+2}=0$ for all $i\in [5]$ (taking indices modulo 5).
As $W$ is a rank 3 positive semidefinite matrix, 
 there exist vectors $u_1,\ldots,u_5\in\R^3$ forming a Gram representation of $W$ and the set $\{u_1,\ldots,u_5\}$ has rank 3. Moreover, one can check  that the set $\{u_1,u_3,u_4\}$ is a base of $\R^3$. 
Hence, $u_2=\alpha u_1+\beta u_3+\gamma u_4$ for some $\alpha,\beta,\gamma\in\R$. Using the fact that $W=(u_i^T u_j)_{i,j=1}^5=(\tau(A_iA_j))_{i,j=1}^5$, we obtain the analogous relation:
$A_2=\alpha A_1+\beta A_3+\gamma A_4$. Multiplying both sides by $A_1$ gives $A_1A_2=\alpha A_1^2$. 
Analogously, expressing $u_1$ in the base $ \{u_2,u_4,u_5\}$ implies that $A_1A_2=\alpha' A_2^2$ for some $\alpha'\in \R$. 
Thus, $\alpha A_1^2=\alpha'A_2^2$, implying $\alpha=\alpha'$ (since $W_{11}=W_{22}=1$) and thus $A_1=A_2$, a contradiction (since $W_{12}\ne 1$).
\end{proof}

\begin{corollary}\label{corA}
The inclusions  $\mathrm{cl}(\cgpsd^n)\subseteq \dnn^n$ and 
$\dnn^{n*}\subseteq \cgpsd^{n*}$ are  strict for any $n\ge 5$.
\end{corollary}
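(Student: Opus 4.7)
The plan is to prove the strict inclusion $\mathrm{cl}(\cgpsd^n)\subsetneq \dnn^n$ first, and then derive the dual strict inclusion by a standard bipolar argument. The starting point is the matrix $W=M((\sqrt 5-1)/2,0)$, which is already established in the discussion to lie in $\dnn^5$.

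First I would handle the base case $n=5$. The paragraph preceding Theorem~\ref{theoFW} points out the result of \cite{BLP} that every matrix in $\mathrm{cl}(\cgpsd^n)$ admits a Gram representation $X=(\tau(A_iA_j))_{i,j=1}^n$ by positive elements $A_1,\ldots,A_n$ of some finite von Neumann algebra $(\mathcal M,\tau)$. Combined with Theorem~\ref{theoFW}, which asserts that $W$ has no such representation in any finite von Neumann algebra, this forces $W\notin \mathrm{cl}(\cgpsd^5)$. Since $W\in \dnn^5$, the inclusion $\mathrm{cl}(\cgpsd^5)\subseteq \dnn^5$ is strict.

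Next I would lift this to arbitrary $n\ge 6$ by the standard zero-border trick. Let $\widetilde W\in \mathcal S^n$ be obtained from $W$ by appending $n-5$ rows and columns of zeros; clearly $\widetilde W\in \dnn^n$. To rule out $\widetilde W\in \mathrm{cl}(\cgpsd^n)$, observe that the $5\times 5$ principal submatrix of any matrix in $\cgpsd^n$ is itself completely psd (just restrict the Gram representation to the first five matrices). Thus if $B_k\in \cgpsd^n$ with $B_k\to \widetilde W$, then $B_k[\{1,\ldots,5\}]\in \cgpsd^5$ and $B_k[\{1,\ldots,5\}]\to W$, so $W\in \mathrm{cl}(\cgpsd^5)$, contradicting the previous paragraph. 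Hence $\widetilde W\in \dnn^n\setminus \mathrm{cl}(\cgpsd^n)$, which gives $\mathrm{cl}(\cgpsd^n)\subsetneq \dnn^n$ for every $n\ge 5$.

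Finally, for the dual, I would use that the dual of a convex cone equals the dual of its closure, so $\mathrm{cl}(\cgpsd^n)^*=\cgpsd^{n*}$. If equality $\dnn^{n*}=\cgpsd^{n*}$ held, then taking duals once more and using the bipolar theorem for closed convex cones would give
\[
\dnn^n=\dnn^{n**}=\cgpsd^{n**}=\mathrm{cl}(\cgpsd^n),
\]
contradicting the strict inclusion just established. Therefore $\dnn^{n*}\subsetneq \cgpsd^{n*}$ for every $n\ge 5$. The only substantive ingredient is the combination of the \cite{BLP} representation result with Theorem~\ref{theoFW}; the remaining steps (zero-padding and bipolar duality) are routine.
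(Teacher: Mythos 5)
Your proof is correct and follows essentially the same route as the paper: the key step is combining the \cite{BLP} representation result with Theorem~\ref{theoFW} to exclude $W$ from $\mathrm{cl}(\cgpsd^5)$, then lifting to $n>5$ by zero-padding, and passing to duals via bipolarity. The paper handles these last two steps somewhat tersely (the zero-padding remark in Section~\ref{seclink} is phrased for $\cgpsd$ rather than its closure, and the dual strictness is just stated as ``by taking duals''), so your explicit convergence argument for the padded matrix and your spelled-out bipolar contradiction are welcome clarifications of the same underlying argument.
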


\subsection{The dual cone of the completely positive semidefinite cone }\label{secdual}

The dual of the completely positive cone $\cp^n$ is the copositive cone $\mathcal{COP}^n$, consisting of the matrices $M\in \mathcal S^n$ 
for which  the $n$-variate polynomial 
$p_M=\sum_{i,j=1}^n M_{ij}x_i^2x_j^2$ is nonnegative over $\R^n$, i.e., $\sum_{i,j=1}^n M_{ij}x_i^2x_j^2\ge 0$ for all $x_1,\ldots,x_n\in \R^n$. We now consider the dual of the cone $\cgpsd^n$.

\begin{lemma}\label{lemdual}
Given a matrix $M\in \mathcal S^n$, the following assertions are equivalent:
\begin{itemize}
\item[(i)] $M\in \cgpsd^{n*}$, i.e., $\sum_{i,j=1}^n M_{ij} \langle X_i,X_j\rangle \ge 0$ for all $X_1,\ldots, X_n\in \mathcal S^d_+$ and  $d\in \N$.
\item[(ii)]
$\Tr(\sum_{i,j=1}^n M_{ij} X_i^2 X_j^2)\ge 0$ for all $X_1,\ldots, X_n\in \mathcal S^d$ and $d\in \N.$
\end{itemize}
\end{lemma}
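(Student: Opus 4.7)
The plan is to establish the two implications by a direct translation between Gram matrices of positive semidefinite matrices and traces of fourth powers, using the existence of a psd square root.

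For the implication (ii) $\Rightarrow$ (i), suppose $X_1,\ldots,X_n \in \mathcal S^d_+$. Since each $X_i$ is positive semidefinite, it admits a (unique) psd square root $Y_i \in \mathcal S^d_+ \subseteq \mathcal S^d$ with $Y_i^2 = X_i$. Then
\[
\langle X_i, X_j\rangle = \Tr(X_i X_j) = \Tr(Y_i^2 Y_j^2),
\]
so that $\sum_{i,j} M_{ij} \langle X_i,X_j\rangle = \Tr\bigl(\sum_{i,j} M_{ij} Y_i^2 Y_j^2\bigr) \ge 0$ by applying (ii) to $Y_1,\ldots,Y_n \in \mathcal S^d$.

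For the reverse implication (i) $\Rightarrow$ (ii), suppose $X_1,\ldots,X_n \in \mathcal S^d$ are arbitrary symmetric matrices. Set $\widetilde X_i := X_i^2$; since the square of any symmetric matrix is positive semidefinite, we have $\widetilde X_i \in \mathcal S^d_+$. Then
\[
\Tr\Bigl(\sum_{i,j=1}^n M_{ij} X_i^2 X_j^2\Bigr) = \sum_{i,j=1}^n M_{ij}\, \Tr(\widetilde X_i \widetilde X_j) = \sum_{i,j=1}^n M_{ij}\,\langle \widetilde X_i, \widetilde X_j\rangle,
\]
which is nonnegative by (i).

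There is no real obstacle here; the proof is essentially a change-of-variables argument exploiting the bijection $X \leftrightarrow X^2$ between $\mathcal S^d_+$ and itself (via the psd square root). The only thing to check carefully is the matching of the dimension $d$: in both directions one stays within matrices of the same size $d$, so the existential quantifier over $d$ in (i) and (ii) is respected.
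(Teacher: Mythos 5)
Your proof is correct and follows essentially the same approach as the paper: the paper's proof is a one-line sketch noting that any $X\in\mathcal S^d_+$ can be written as $X=Y^2$ for some $Y\in\mathcal S^d$ (via the spectral decomposition), which is exactly the change-of-variables you carry out explicitly in both directions.
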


\begin{proof}
Use the fact that any  matrix $X\in \mathcal S^d_+$ can be written as $X=Y^2$ for some $Y\in \mathcal S^d$. Indeed, write $X=PDP^T$, where $P$ is orthogonal and $D$ is the diagonal matrix containing the eigenvalues of $X$, and set $Y=P\sqrt DP^T$.
  \end{proof}

\noindent
In other words,  $M\in \cgpsd^{n*}$ if the associated polynomial $p_M= \sum_{i,j=1}^n M_{ij} X_i^2 X_j^2$  in the non-commutative variables $X_1,\ldots,X_n$ is {\em trace positive}, which means that  the  evaluation of $p_M$  at any symmetric matrices $X_1,\ldots, X_n$ (of the same arbitrary  size $d\ge 1$) produces a matrix with nonnegative  trace.
Hence copositivity corresponds to  restricting to symmetric matrices $X_i$ of  size $d=1$, i.e., to real numbers.

Interestingly, understanding which matrices lie in $\cgpsd^{n*}$ is deeply connected with Connes' embedding conjecture~\cite{Connes:1976}, one of the most important conjectures in von Neumann algebra.
A reformulation of the conjecture that shows this connection is given by Klep and Schweighofer~\cite{Klep:2008}, see Conjecture~\ref{conj:equivconnes} below. In order to state it, we need to introduce some notation.

\medskip
We let $\mathbb{R}[\underline{x}]$ (resp., $\mathbb{R}\langle \underline{X} \rangle$)  denote
 the set of real polynomials in the commutative variables $x_1,\dots,x_n$ (resp.,  in the non-commutative variables $X_1,\dots,X_n$).
$\mathbb{R}\langle \underline{X} \rangle$ is endowed with the involution $*: \mathbb{R}\langle \underline{X} \rangle \to \mathbb{R}\langle \underline{X} \rangle$ that sends each variable to itself,
 each monomial $X_{i_1}X_{i_2}\cdots X_{i_t}$ to its reverse $X_{i_t}\cdots X_{i_2}X_{i_1}$ and extends linearly to arbitrary polynomials; e.g.,
 $(X_1X_2 + X_2X_3^2)^* = X_2X_1 + X_3^2X_2$.
A polynomial $f\in \ncP$ is {\em symmetric} if $f^*=f$ and $\mathit{S}\mathbb{R}\langle \underline{X} \rangle$ denotes  the set of symmetric  polynomials in $\ncP$. 
A polynomial of the form $ff^*$ is called a {\em Hermitian square} and a polynomial of the form $[f,g]=fg-gf$ is called a  {\em commutator}.
A polynomial $f\in \ncP$ is said to be {\em trace positive} if $\Tr(f(X_1,\ldots, X_n))\ge 0$ for all $(X_1,\ldots,X_n)\in \cup_{d\ge 1} (\mathcal S^d)^n$.
 Note that $f^*$ evaluated at $(X_1,\ldots,X_n)\in (\mathcal S^d)^n$ is equal to $f(X_1,\ldots,X_n)^T$; hence, any Hermitian square $ff^*$ is trace positive.
 Moreover, the trace of any commutator vanishes when evaluated at any $n$-tuple of matrices.

\medskip
  The {\em tracial quadratic module} $\text{\rm tr}\M$ generated by a set of polynomials $p_1,\ldots,p_m\in S\ncP$ consists of all polynomials of the
form $h+ \sum_{j=1}^{m_0} f_jf_j^* +\sum_{i=1}^m \sum_{j_i=1}^{m_i} g_{j_i} p_i g_{j_i}^*$, where $h\in \ncP$ is a sum of commutators,   $ f_j,g_{j_i}\in \ncP$ and $m_0,m_i\in \N$.
 We consider  here  the tracial quadratic module $\trMcube$  generated by  the polynomials $1-X_1^2, \ldots, 1-X_n^2$, 
 and the tracial quadratic module $\trMball$  generated by the polynomial $1-\sum_{i=1}^n X_i^2$. 
  Clearly any polynomial in $\trMcube$ (resp., in $\trMball$) is trace positive on the (non-commutative version of the) hypercube
  $\Qnc$ (resp., on the non-commutative ball $\Bnc$), where we set
 \begin{equation*}\label{eqcube}
\Qnc=\bigcup_{d\ge 1} \left\{ (X_1,\ldots,X_n)\in (\mathcal S^d)^n: I-X_i^2\succeq 0\ \forall i\in [n] \right\},
\end{equation*}
\begin{equation*}\label{eqball}
\Bnc = \bigcup_{d\ge 1} \left\{ (X_1,\ldots,X_n)\in (\mathcal S^d)^n: I-\sum_{i=1}^n X_i^2\succeq 0\right \}.
\end{equation*}
\noindent

Klep and Schweighofer \cite{Klep:2008} (see also \cite{Burgdorf:2013}) showed that Connes' embedding conjecture 
is equivalent to the following  conjecture   characterizing the trace positive polynomials on $\Qnc$. 

\begin{conjecture}\label{conj:equivconnes}\cite{Klep:2008} 
Let $f \in \mathit{S}\mathbb{R}\langle \underline{X} \rangle$. The following are equivalent:
\begin{enumerate}[label=\roman*.]
\item[(i)] \label{enu:conj1} $f$ is trace positive on $\Qnc$, i.e., $\Tr(f(X_1,\ldots,X_n))\ge 0$ for all $(X_1,\ldots,X_n)\in \Qnc$.
\item[(ii)]\label{enu:conj2} For any  $\epsilon > 0$, $f + \epsilon\in \trMcube$, i.e., $f + \epsilon= g+h$, where  $h$ is a sum of commutators and $g= \sum_{j=1}^{m_0} f_jf_j^* +\sum_{i=1}^n \sum_{j_i=1}^{m_i} g_{j_i} (1-X_i^2) g_{j_i}^*$
for some $f_j,g_{j_i}\in \ncP$ and $m_0,m_i\in \N$.
\end{enumerate}
\end{conjecture}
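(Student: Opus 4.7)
The stated equivalence is known (via \cite{Klep:2008}) to be equivalent to Connes' embedding conjecture, so an honest proof plan must acknowledge that the hard direction reduces to that deep open problem. The strategy is therefore to handle the easy implication in full and to sketch the Hahn-Banach / GNS machinery that would reduce the hard one to Connes.

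For (ii) $\Rightarrow$ (i), fix a tuple $(X_1, \ldots, X_n) \in \Qnc$ with $X_i \in \mathcal{S}^d$, and for each $\epsilon > 0$ pick a representation
\[
f + \epsilon \;=\; h + \sum_j f_j f_j^* + \sum_{i=1}^n \sum_{j_i} g_{j_i} (1 - X_i^2) g_{j_i}^*.
\]
Evaluating at the tuple: $h$ contributes zero trace (as a sum of commutators); each $f_j f_j^*$ evaluates to $F_j F_j^T$ with nonnegative trace; each $g_{j_i}(1 - X_i^2) g_{j_i}^*$ evaluates to $G_{j_i}(I - X_i^2) G_{j_i}^T$, whose trace equals $\Tr((I - X_i^2) G_{j_i}^T G_{j_i}) \geq 0$ since both $I - X_i^2$ and $G_{j_i}^T G_{j_i}$ are positive semidefinite. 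Summing gives $\Tr(f(X)) + \epsilon d \geq 0$, and letting $\epsilon \to 0^+$ yields $\Tr(f(X)) \geq 0$.

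For (i) $\Rightarrow$ (ii) the plan is Hahn-Banach plus GNS plus Connes. Suppose toward a contradiction that $f + \epsilon \notin \trMcube$ for some $\epsilon > 0$. Working modulo the subspace spanned by commutators (on which the trace descends to a well-defined linear functional) the image of $\trMcube$ is still a convex cone, and a degree-by-degree Hahn-Banach separation---used to sidestep topological difficulties in the infinite-dimensional ambient space---produces a linear functional $L : \mathit{S}\ncP \to \R$ with $L \geq 0$ on $\trMcube$, $L(1) = 1$, and $L(f) \leq -\epsilon$. Applying the GNS construction to $L$ produces a finite von Neumann algebra $(\mathcal{N}, \tau)$ with symmetric generators $\widehat{X}_1, \ldots, \widehat{X}_n$ whose spectra lie in $[-1, 1]$ (forced by the conditions $L(g(1 - X_i^2) g^*) \geq 0$) and $\tau(f(\widehat{X})) < 0$. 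Invoking Connes' conjecture, $(\mathcal{N}, \tau)$ embeds trace-preservingly into an ultrapower of the hyperfinite $\mathrm{II}_1$ factor, which is itself a tracial ultralimit of matrix algebras; truncating via functional calculus to keep operator norms at most $1$ pulls the embedding back to a sequence of matrix tuples in $\Qnc$ on which $\tfrac{1}{d}\Tr(f)$ approaches $\tau(f(\widehat{X})) < 0$, contradicting (i).

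The main obstacle is exactly the last step---approximating an abstract tracial von Neumann algebra by finite matrix algebras---which is the content of Connes' conjecture and cannot be bypassed. The remaining ingredients (the easy direction, the commutator-quotient Hahn-Banach separation, and the GNS construction) are standard operator-algebraic machinery that go through unconditionally; the entire difficulty is concentrated in that final approximation step, which is why the statement is labelled a conjecture rather than a theorem.
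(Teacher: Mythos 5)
This statement is presented in the paper as a \emph{conjecture}, not a theorem; the paper does not supply a proof of it, and indeed it cannot, since Klep and Schweighofer showed the equivalence (i)$\Leftrightarrow$(ii) is itself equivalent to Connes' embedding conjecture, which remains open. You have correctly diagnosed this: there is no unconditional proof to be had, and the honest thing to do is exactly what you did, namely prove the elementary direction and reduce the hard direction to Connes. Your proof of (ii)$\Rightarrow$(i) is correct and complete: the trace of a commutator vanishes, $\Tr(F_jF_j^T)\ge 0$, $\Tr\bigl(G_{j_i}(I-X_i^2)G_{j_i}^T\bigr)=\Tr\bigl((I-X_i^2)G_{j_i}^TG_{j_i}\bigr)\ge 0$ since both factors are psd, and taking $\epsilon\to 0^+$ kills the $\epsilon d$ error term. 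Your sketch of (i)$\Rightarrow$(ii) via Hahn--Banach separation modulo commutators, GNS to obtain a finite tracial von Neumann algebra with contractive self-adjoint generators, and Connes' embedding to approximate it by matrix tuples in $\Qnc$ is a fair summary of the actual Klep--Schweighofer reduction. You are also right that the irreducible difficulty sits entirely in the final finite-dimensional approximation step. Since the paper merely cites this conjecture and never proves it, there is no internal argument to compare against; your write-up matches what the paper asserts about the status of the statement and handles correctly the part that is provable.
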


\noindent
In fact, Connes' embedding conjecture is also equivalent to Conjecture \ref{conj:equivconnes} where we restrict $f$ to have degree at most 4 (see \cite[Proposition 2.14]{Burgdorf:2011}).
Note that the polynomials $p_M$  involve only monomials of the form $X_i^2X_j^2$.
 Interestingly, in the proof that Conjecture \ref{conj:equivconnes} is equivalent to Connes' embedding conjecture,  these monomials  $X_i^2X_j^2$ play a fundamental role (due to a result of R\u{a}dulescu~\cite{Radulescu:1999}). 
Finally, let us point out that, as observed by Burgdorf \cite[Remark 2.8]{Burgdorf:2011}, Connes' conjecture is also equivalent to Conjecture~\ref{conj:equivconnes}  where  the ball is used instead of the hypercube, i.e., replacing the tracial quadratic module $\trMcube$  by the tracial  quadratic module $\trMball$.

\medskip
While Conjecture \ref{conj:equivconnes} involves trace positive polynomials on the hypercube, membership of a matrix $M$ in $\cgpsd^{n*}$ requires that the polynomial $p_M$  is trace positive on {\em all} symmetric matrices. To make the link between both settings, the key (easy to check) observation  is that, since  $p_M$ is  a homogeneous polynomial,  
 trace positivity over the hypercube, over the full space and over the ball are all equivalent properties. This gives:

 \begin{lemma}\label{lemball}
A matrix $M\in \mathcal S^n$ belongs to $\cgpsd^{n*}$ if and only if the associated polynomial $p_M$ is trace positive over  the cube $\Qnc$ or, equivalently, over  the ball $\Bnc$.
\end{lemma}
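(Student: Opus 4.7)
The plan is to exploit the fact that $p_M=\sum_{i,j} M_{ij} X_i^2 X_j^2$ is a homogeneous non-commutative polynomial of degree $4$ to reduce trace positivity on all tuples of symmetric matrices (which characterizes $\cgpsd^{n*}$ by Lemma~\ref{lemdual}) to trace positivity on either $\Qnc$ or $\Bnc$.

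One direction is immediate: if $M\in\cgpsd^{n*}$, then by Lemma~\ref{lemdual}, $p_M$ is trace positive on $\bigcup_{d\ge1}(\mathcal S^d)^n$, and since both $\Qnc\subseteq \bigcup_{d\ge1}(\mathcal S^d)^n$ and $\Bnc\subseteq \bigcup_{d\ge1}(\mathcal S^d)^n$, trace positivity on these subsets follows.

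For the converse, I would start with an arbitrary tuple $(X_1,\ldots,X_n)\in (\mathcal S^d)^n$ for some $d\ge 1$; the case where all $X_i$ vanish is trivial, so assume otherwise. Define a scaling factor $\lambda>0$ tailored to whichever region we are using: for $\Bnc$, take $\lambda=\sqrt{\|\sum_{i=1}^n X_i^2\|_{\mathrm{op}}}$, which ensures that $I-\sum_i (X_i/\lambda)^2 = I - \lambda^{-2}\sum_i X_i^2 \succeq 0$, so that $(X_1/\lambda,\ldots,X_n/\lambda)\in\Bnc$; for $\Qnc$, take $\lambda=\max_i \|X_i\|_{\mathrm{op}}$, so that $I-(X_i/\lambda)^2\succeq 0$ for every $i$, placing the scaled tuple in $\Qnc$. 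Since every monomial $X_i^2X_j^2$ appearing in $p_M$ is of total degree $4$, we have the homogeneity identity
\[
p_M(X_1/\lambda,\ldots,X_n/\lambda)=\lambda^{-4}\, p_M(X_1,\ldots,X_n),
\]
and taking traces preserves this scaling. Thus if $p_M$ is trace positive on the region in question, then $\Tr(p_M(X_1,\ldots,X_n))=\lambda^4\,\Tr(p_M(X_1/\lambda,\ldots,X_n/\lambda))\ge 0$, which by Lemma~\ref{lemdual} yields $M\in\cgpsd^{n*}$.

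There is no real obstacle here: the argument hinges only on the homogeneity of $p_M$ and on the elementary observation that any tuple of symmetric matrices can be rescaled into the cube or the ball. The only small care needed is to treat the zero tuple separately and to choose the normalizing factor correctly depending on whether we are targeting $\Qnc$ (operator-norm bound on each $X_i$) or $\Bnc$ (bound on the sum $\sum_i X_i^2$).
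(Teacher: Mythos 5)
Your proof is correct and follows exactly the approach the paper sketches: the paper simply remarks that since $p_M$ is homogeneous, trace positivity over the cube, the ball, and the full space are all equivalent, and you have spelled out the rescaling argument (with the appropriate operator-norm normalizations for $\Qnc$ versus $\Bnc$, the degree-$4$ homogeneity identity, and the trivial zero-tuple case) that makes this observation precise.
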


\subsection{Approximating the dual cone of $\cgpsd$}\label{secapprsets}

For a matrix $M\in \mathcal S^n$,
if  its associated polynomial $p_M$ belongs to the tracial quadratic module $\trMball$ then $M$  belongs to the dual cone $\cgpsd^{n*}$. We now define the set $\Knceps$ consisting of all matrices $M$ for which the perturbed polynomial $p_M+\epsilon$ belongs to $\trMball$. To simplify the notation, in   $\Knceps$ we omit the dependence on the size $n$ of the matrices.

\begin{definition}\label{defKnc}
For $\epsilon\ge 0$, let $\Knceps$ denote the set of matrices $M\in \mathcal S^n$ for which the polynomial $p_M+\epsilon$ belongs to the tracial quadratic module $\trMball$.

\end{definition}

\begin{lemma}\label{lembasic}
For any $\epsilon \geq 0$, $\Knceps$ is a convex set. 
Moreover, we have inclusion $\displaystyle \bigcap_{\epsilon>0} \Knceps\subseteq \cgpsd^{n*}$,
with equality if Connes' embedding conjecture holds.
\end{lemma}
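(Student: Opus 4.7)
The plan is to unpack the definitions and use three simple facts: the map $M \mapsto p_M$ is linear, the tracial quadratic module $\trMball$ is a convex cone (closed under addition and nonnegative scaling), and trace positivity of $p_M$ on $\Bnc$ characterizes $\cgpsd^{n*}$ by Lemma~\ref{lemball}.

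For convexity of $\Knceps$, I would take $M,M'\in\Knceps$ and $\lambda\in[0,1]$ and use linearity of $M\mapsto p_M$ to write
\[
p_{\lambda M+(1-\lambda)M'}+\epsilon \;=\; \lambda(p_M+\epsilon)+(1-\lambda)(p_{M'}+\epsilon).
\]
Since $\trMball$ is closed under sums and under multiplication by nonnegative scalars (absorb a factor $\sqrt{\lambda}$ into each $f_j$ and each $g_{j_i}$, and use that $\lambda h$ is still a sum of commutators when $h$ is), the right-hand side lies in $\trMball$, giving $\lambda M+(1-\lambda)M'\in\Knceps$.

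For the inclusion $\bigcap_{\epsilon>0}\Knceps\subseteq\cgpsd^{n*}$, let $M$ lie in every $\Knceps$. Each element of $\trMball$ is trace positive on $\Bnc$ (commutators have zero trace, Hermitian squares $ff^*$ have nonnegative trace since $f^*$ evaluates to the transpose, and $g(1-\sum X_i^2)g^*$ is a Hermitian square times a positive operator). Hence, for every $(X_1,\dots,X_n)\in\Bnc$ consisting of $d\times d$ matrices,
\[
\Tr\!\bigl(p_M(X_1,\dots,X_n)\bigr)+\epsilon\, d \;\ge\; 0,
\]
for every $\epsilon>0$. Letting $\epsilon\downarrow 0$ gives $\Tr(p_M(X_1,\dots,X_n))\ge 0$, so $p_M$ is trace positive on $\Bnc$. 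By Lemma~\ref{lemball}, this means $M\in\cgpsd^{n*}$.

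For the reverse inclusion under Connes' embedding conjecture, suppose $M\in\cgpsd^{n*}$. By Lemma~\ref{lemball}, $p_M$ is trace positive on $\Bnc$. Invoking the ball version of Conjecture~\ref{conj:equivconnes} (noted in the remark attributed to Burgdorf), trace positivity of $p_M$ on $\Bnc$ implies $p_M+\epsilon\in\trMball$ for every $\epsilon>0$, i.e., $M\in\Knceps$ for all $\epsilon>0$, so $M\in\bigcap_{\epsilon>0}\Knceps$. The only subtle point is verifying that Conjecture~\ref{conj:equivconnes} indeed applies in the ball formulation we need, but this is exactly the content of the remark cited just before Lemma~\ref{lemball}, so there is no real obstacle; the proof is essentially a bookkeeping exercise in the definitions.
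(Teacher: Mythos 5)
Your proof is correct and follows essentially the same route as the paper: linearity of $M\mapsto p_M$ plus convexity of $\trMball$ for the first part, taking $\epsilon\downarrow 0$ combined with Lemma~\ref{lemball} for the inclusion, and the ball form of Conjecture~\ref{conj:equivconnes} for the equality under Connes' conjecture. You spell out a couple of useful details (the $\epsilon\,d$ term when evaluating on $d\times d$ matrices, and the reason each summand of $\trMball$ has nonnegative trace) that the paper leaves implicit, but the argument is the same.
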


\begin{proof}
Convexity follows from the fact that $p_{\lambda M+(1-\lambda)M'}+\epsilon=\lambda (p_M+\epsilon)+(1-\lambda)(p_{M'}+\epsilon)$  for $M,M'\in\mathcal S^n$ and $\lambda\in [0,1]$. 
Assume $M\in  \bigcap_{\epsilon>0} \Knceps$.
Then, for any $\epsilon>0$, the polynomial $p_M+\epsilon$ is trace positive on the ball $\Bnc$. By letting $\epsilon$ tend to 0, we obtain that $p_M$ is trace positive on $\Bnc$ and thus $M\in \cgpsd^{n*}$ by Lemma \ref{lemball}.
Finally, equality $ \bigcap_{\epsilon>0} \Knceps= \cgpsd^{n*}$ holds under Connes' embedding conjecture since, as mentioned above,  by results of  \cite{Klep:2008,Burgdorf:2013}  Connes' embedding conjecture  is equivalent to 
Conjecture~\ref{conj:equivconnes}, also when  the ball is used instead of the hypercube.
\end{proof}

We now point out a connection between the set $\Knceps$ and the following set $\Kc$, used in the commutative setting.
Let $\Sigma$ denote the set of  sums of squares of (commutative) polynomials and following \cite{Parrilo:2000} define the cone
\begin{equation}\label{defKc}
\Kc:=\left\{M\in\mathcal S^n: \exists r\in \N \ p_M(\sum_{i=1}^nx_i^2)^r \in\Sigma\right\}= \left\{M\in\mathcal S^n: p_M\in \Sigma +(1-\sum_{i=1}^n x_i^2)\R[\underline x]\right\}
\end{equation}
(see \cite[Proposition 2]{deKlerk:2005} for the equivalence between both definitions).
The inclusion  $\Kc \subseteq \cop$ is clear and Parrilo \cite[Section 5.3]{Parrilo:2000} showed that $\Kc$ covers the interior of $\cop$.
Moreover, by adding degree constraints on the terms entering the decomposition of $p_M$, he defined a hierarchy of 
subcones of $\cop$, whose first level is equal to the dual of the doubly nonnegative cone~\cite[Section 5.3]{Parrilo:2000}:
$$\Kc^{(0)} := \left\{M\in\mathcal S^n: p_M\in \Sigma\right\}= \mathcal S^n_+ + (\mathcal S^n \cap \R^{n\times n}_+)= \dnn^{n*}.$$
It turns out that the set $\K_{\text{\rm nc},0}$ is equal to $\Kc^{(0)}$.
 
\begin{lemma}\label{lemBall}
We have: $\dnn^{n*} = \Kc^{(0)}=\K_{\text{\rm nc},0}\subseteq \Knceps$ for any $\epsilon > 0$.
\end{lemma}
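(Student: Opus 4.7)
The first equality $\dnn^{n*}=\Kc^{(0)}$ is the result of Parrilo quoted just above and I would take it as given. The last inclusion $\K_{\text{\rm nc},0}\subseteq\Knceps$ for $\epsilon>0$ is also immediate: if $p_M\in\trMball$, then adjoining the constant Hermitian square $\epsilon=(\sqrt{\epsilon})(\sqrt{\epsilon})^{*}$ preserves membership, so $p_M+\epsilon\in\trMball$. The real content is the middle equality $\Kc^{(0)}=\K_{\text{\rm nc},0}$, which I would establish in both directions.

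For $\Kc^{(0)}\subseteq\K_{\text{\rm nc},0}$: using $\dnn^{n*}=\mathcal S^{n}_{+}+(\mathcal S^{n}\cap\R^{n\times n}_{+})$, write $M=P+N$ with $P\succeq 0$ and $N$ entrywise nonnegative, and handle the two pieces separately in $p_{M}=p_{P}+p_{N}$. For $p_{P}$, factor $P=LL^{T}$ and set $Q_{k}:=\sum_{i}L_{ik}X_{i}^{2}\in\ncP$; since each $X_{i}^{2}$ is symmetric under $*$, so is $Q_{k}$, giving $Q_{k}Q_{k}^{*}=Q_{k}^{2}$ and hence $p_{P}=\sum_{k}Q_{k}Q_{k}^{*}$, a sum of Hermitian squares. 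For $p_{N}$, use the identity $X_{i}^{2}X_{j}^{2}=(X_{i}X_{j})(X_{i}X_{j})^{*}+[X_{i},X_{i}X_{j}^{2}]$ (immediate from $(X_{i}X_{j})^{*}=X_{j}X_{i}$) to express each monomial as a Hermitian square plus a single commutator, and weight by $N_{ij}\ge 0$. Summing, $p_{M}$ is a sum of Hermitian squares and commutators, so it lies in $\trMball$ with no ball terms required.

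For $\K_{\text{\rm nc},0}\subseteq\Kc^{(0)}$: fix a decomposition $p_{M}=h+\sum_{j}f_{j}f_{j}^{*}+\sum_{k}g_{k}(1-\sum_{i}X_{i}^{2})g_{k}^{*}$ with $h$ a sum of commutators, and specialize it at $X_{i}=x_{i}I_{d}$. Commutators vanish at commuting matrices, $*$ acts trivially, and every term becomes a scalar multiple of $I_{d}$, so reading off the coefficient of $I_{d}$ gives the commutative identity
\[
p_{M}(x)=\sum_{j}f_{j}^{c}(x)^{2}+\sum_{k}g_{k}^{c}(x)^{2}(1-\|x\|^{2}),
\]
where $f_{j}^{c},g_{k}^{c}\in\R[\underline{x}]$ are the commutative images of $f_{j},g_{k}$. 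On its own this only says $M\in\Kc$, which is strictly weaker than $\Kc^{(0)}$ in general, so the main obstacle I anticipate is bridging the gap from $\Kc$ to $\Kc^{(0)}$. The tool is the homogeneity of $p_{M}$: writing $f_{j}^{c}=\sum_{d}f_{j}^{(d)}$ and $g_{k}^{c}=\sum_{d}g_{k}^{(d)}$ in homogeneous components and matching the degree-$D$ parts on both sides for each $D$, the degree-$0$ equation $\sum_{j}(f_{j}^{(0)})^{2}+\sum_{k}(g_{k}^{(0)})^{2}=0$ forces $f_{j}^{(0)}=g_{k}^{(0)}=0$; the degree-$2$ equation then reduces to $\sum_{j}(f_{j}^{(1)})^{2}+\sum_{k}(g_{k}^{(1)})^{2}=0$, forcing $f_{j}^{(1)}=g_{k}^{(1)}=0$; and the degree-$4$ equation collapses to $p_{M}=\sum_{j}(f_{j}^{(2)})^{2}+\sum_{k}(g_{k}^{(2)})^{2}\in\Sigma$, giving $M\in\Kc^{(0)}$ as required.
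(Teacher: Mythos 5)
Your proof is correct and follows the same overall architecture as the paper's: the split $M = P + N$ with $P \succeq 0$ and $N \ge 0$ (valid since $\dnn^{n*} = \mathcal S^n_+ + (\mathcal S^n \cap \R^{n\times n}_+)$), with $p_P$ written as a sum of Hermitian squares via a Gram/Cholesky factor and $p_N$ handled monomial by monomial via a commutator identity; and, for the reverse inclusion, commutative specialization followed by a homogeneity argument. The only substantive divergence is at the last step: the paper quotes a proposition of de Klerk, Laurent and Parrilo asserting that a homogeneous polynomial of even degree lying in $\Sigma + (1-\sum_i x_i^2)\Sigma$ already lies in $\Sigma$, whereas you prove this inline by expanding $f_j^c$ and $g_k^c$ in homogeneous components and matching degrees $0,2,4$, successively forcing the low-degree parts to vanish. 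That degree-matching argument is correct (the degree-0 and degree-2 identities are sums of squares equal to zero, and once $f_j^{(0)}=f_j^{(1)}=g_k^{(0)}=g_k^{(1)}=0$ the degree-4 part collapses to $p_M=\sum_j (f_j^{(2)})^2 + \sum_k (g_k^{(2)})^2$), so your version is self-contained where the paper's relies on a citation. Your commutator identity $X_i^2X_j^2 = (X_iX_j)(X_iX_j)^* + [X_i, X_iX_j^2]$ is a cosmetic variant of the paper's $X_i^2X_j^2 = [X_i^2X_j, X_j] + X_jX_i^2X_j$; both are immediate to check and serve identically.
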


\begin{proof} The inclusion $\K_{\text{\rm nc},0}\subseteq \Knceps$ is clear.

First we show the inclusion $\K_{\text{\rm nc},0} \subseteq \Kc^{(0)}$. For this, assume $M\in \K_{\text{\rm nc},0}$, i.e.,
$p_M=h+g$, where $h$ is a sum of commutators and
$g= \sum_{j=1}^{m_0} f_jf_j^* +\sum_{j=1}^{m_1} g_j (1-\sum_{i=1}^nX_i^2) g_j^*$ with $f_j,g_j\in \ncP$.
If we evaluate $p_M$ at commutative variables $x$, we see that $h(x)$ vanishes and thus we obtain $p_M(x)=g(x) \in \Sigma+(1-\sum_{i=1}^nx_i^2)\Sigma$.
As $p_M$ is a homogeneous polynomial, 
 we can derive 
 that $p_M\in \Sigma$ and thus $M\in \Kc^{(0)}$.
This follows from  \cite[Proposition 4]{deKlerk:2005} which shows that, for a homogeneous polynomial of even degree, membership in $\Sigma +(1-\sum_{i=1}^nx_i^2)\Sigma$ implies  membership in $\Sigma$.

We now show the reverse inclusion $ \Kc^{(0)} \subseteq\K_{\text{\rm nc},0}$. As $\Kc^{(0)}=\dnn^{n*}=\mathcal S^n_+ +(\mathcal S^n \cap \R^{n\times n}_+)$, it suffices to show that if $M\succeq 0$ or if $M\ge 0$ then $p_M$ is a sum of commutators and of Hermitian squares, which implies that $M\in \K_{\text{\rm nc},0}$.
Assume first that $M\succeq 0$ and let $u_1,\ldots,u_n\in \R^d$ be vectors forming a Gram representation of $M$.
Then, $p_M(\underline X)=\sum_{i,j=1}^n \sum_{h=1}^d u_i(h)u_j(h)X_i^2X_j^2=\sum_{h=1}^d (\sum_{i=1}^n u_i(h)X_i^2)^2$ is a sum of Hermitian squares. 
Assume now that  $M\ge 0$. Then each  $M_{ij}X_i^2X_j^2=M_{ij}([X_i^2X_j,X_j]+X_jX_i^2X_j)$ is  sum of a commutator and a Hermitian square 
and thus $p_M$ is sum of commutators and Hermitian squares. 
\end{proof}

We conclude with some remarks concerning how well $\Kc$ and $\Knceps$ approximate  the cones $\cop$ and $\cgpsd^{*}$, respectively. 
As mentioned above, Parrilo \cite{Parrilo:2000} showed that $\Kc$ covers the interior of the copositive cone, i.e., $\text{int}(\cop)\subseteq \Kc \subseteq \cop,$ which can also be derived using the following result of Schm\"udgen \cite{Schmudgen:1991}.

\begin{theorem} 
\cite{Schmudgen:1991}  \label{theoSchmudgen}
If $f\in \R[\underline x]$ is positive on the sphere, i.e.,
$f(x)>0$ for all $x\in\R^n$ with $\sum_{i=1}^nx_i^2=1$, then $f\in \Sigma + (1-\sum_{i=1}^n x_i^2)\R[\underline x]$.
\end{theorem}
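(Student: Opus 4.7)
The plan is to derive this from Putinar's Positivstellensatz, by realizing the sphere as a compact basic semialgebraic set whose associated quadratic module is Archimedean.

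First I would encode $S^{n-1}=\{x\in\R^n: g_1(x)\geq 0 \text{ and } g_2(x)\geq 0\}$ where $g_1:=1-\sum_{i=1}^n x_i^2$ and $g_2:=\sum_{i=1}^n x_i^2-1$, and consider the quadratic module $M:=\Sigma+g_1\Sigma+g_2\Sigma$ they generate. Since $1\in\Sigma$ and $g_1\in M$, the polynomial $2-\sum_{i=1}^n x_i^2=1+g_1$ lies in $M$, witnessing that $M$ is Archimedean (the condition $N-\sum_i x_i^2\in M$ holds with $N=2$).

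Next I would invoke Putinar's Positivstellensatz: because $f$ is strictly positive on the compact set $\{g_1\geq 0, g_2\geq 0\}=S^{n-1}$ and $M$ is Archimedean, one obtains $f\in M$, i.e., $f=\sigma_0+g_1\sigma_1+g_2\sigma_2$ for some $\sigma_0,\sigma_1,\sigma_2\in\Sigma$. To match the exact form stated in the theorem, I would then rewrite the last two summands using the elementary identity $p=\bigl(\tfrac{p+1}{2}\bigr)^2-\bigl(\tfrac{p-1}{2}\bigr)^2$, valid for every $p\in\R[\underline x]$. This shows that every polynomial is a difference of two sums of squares, and hence $g_1\Sigma+g_2\Sigma=(1-\sum_i x_i^2)(\Sigma-\Sigma)=(1-\sum_i x_i^2)\R[\underline x]$. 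Substituting into the Putinar decomposition yields $f\in\Sigma+(1-\sum_i x_i^2)\R[\underline x]$, as required.

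The main obstacle is Putinar's Positivstellensatz itself, whose proof relies on Haviland's theorem from the theory of the moment problem combined with a Hahn--Banach separation argument that exploits the Archimedean property. An alternative elementary route specific to the sphere would be to homogenize $f$ modulo the relation $\sum_i x_i^2\equiv 1\pmod{1-\sum_i x_i^2}$ and try to certify positivity of the resulting homogeneous polynomial directly; but this path is obstructed by Hilbert's classical observation that positive forms are not always sums of squares, and would therefore require additional ingredients such as Reznick-style denominator theorems or spherical-harmonic expansions. For this reason, invoking Putinar's theorem appears to be the cleanest strategy.
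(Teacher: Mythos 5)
The paper itself gives no proof of this theorem: it is stated as a quotation from Schm\"udgen's 1991 paper, so there is nothing internal to compare against. Your argument is nonetheless correct, and it is worth noting that you reach the result by a slightly different (and chronologically later) tool than the one cited.

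You invoke Putinar's Positivstellensatz applied to the quadratic module $M=\Sigma+g_1\Sigma+g_2\Sigma$ with $g_1=1-\sum_i x_i^2$ and $g_2=-g_1$. Your verification that $M$ is Archimedean (via $2-\sum_i x_i^2 = 1+g_1\in M$), and your cleanup step using the identity $p=\bigl(\tfrac{p+1}{2}\bigr)^2-\bigl(\tfrac{p-1}{2}\bigr)^2$ to rewrite $g_1\Sigma+g_2\Sigma=(1-\sum_i x_i^2)\,\R[\underline x]$, are both correct; indeed one can note more sharply that $M$ is \emph{exactly} equal to $\Sigma+(1-\sum_i x_i^2)\R[\underline x]$, so Putinar's conclusion $f\in M$ is literally the statement to be proved. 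The cited reference, however, predates Putinar (1993) and proves the preordering Positivstellensatz: if $f>0$ on a compact set $\{g_1\geq 0,\ldots,g_m\geq 0\}$, then $f$ lies in the preordering generated by the $g_i$, with no Archimedean hypothesis to check. For the sphere the preordering is $\Sigma+g_1\Sigma+g_2\Sigma+g_1g_2\Sigma$, and since $g_2=-g_1$ and $g_1g_2=-g_1^2$, it collapses to the same set $\Sigma+(1-\sum_i x_i^2)\R[\underline x]$. So both routes deliver the theorem; Schm\"udgen's is the one the paper actually refers to and avoids verifying the Archimedean condition (it is automatic from compactness), while your Putinar-based route is the one more commonly taught nowadays and makes the quadratic-module structure explicit. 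Your closing remark about the obstruction to a purely homogeneous SOS certificate (Hilbert's counterexamples, Reznick-type denominators) is accurate and correctly explains why the compactness-based Positivstellensatz machinery is really needed here.
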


In the non-commutative case, membership of a matrix $M$ in  $\Knceps$ means that the polynomial $p_M+\epsilon$ belongs to the tracial quadratic module $\trMball$, but there is no clear link between this and membership in the interior of the cone $\cgpsd^{*}$.

To explain this difference of behavior between $\Kc$ and $\Knceps$ let us point out that, in the commutative (scalar) case,  working with the ball is in some sense equivalent to working with the sphere. Indeed, as $p_M$ is homogeneous, it is nonnegative over $\R^n$ if and only if it is nonnegative over the ball or, equivalently, over the sphere, because  one can {\em rescale} any nonzero $x\in \R^n$ so that  $\sum_{i=1}^n x_i^2~=~1$. However, when working with matrices $X_1,\ldots,X_n$, one can rescale them to ensure that $I-\sum_{i=1}^n X_i^2\succeq 0$ but one cannot ensure equality: $\sum_{i=1}^n X_i^2=I$. Hence, in the non-commutative case one cannot equivalently switch between the ball and the sphere.

\section{Conic programs for  the quantum graph parameters}\label{secconicopt}

In this section we  show how to reformulate the quantum graph parameters as conic optimization problems using the completely positive semidefinite cone $\cgpsd$. We first express each quantum graph parameter 
through testing the feasibility of a sequence of optimization programs over the cone  $\cgpsd$  (Propositions \ref{def:cgqindep} and \ref{def:cgqchrom}) and then as a single `aggregated' optimization program over $\cgpsd$ (Proposition \ref{propoptchiq}).
Moreover we show that, if in  these conic programs we   replace the cone $\cgpsd$ by its subcone $\cp$ or by  its supercone $\dnn$, then we  find respectively the classical graph parameters and their corresponding bounds in terms of the theta number (Corollaries \ref{cor:theta'equiv} and \ref{corqchi}). 
In Section \ref{secPsi}, we use the convex sets $\Knceps$ to define the new parameters $\Psi_\epsilon(G)$.

\subsection{Conic reformulation for quantum stability numbers}\label{secconealpha}

We begin with  providing an equivalent reformulation for the two quantum stability numbers $\alpha_q(G)$ and $\qindep(G)$ as conic feasibility programs over  the completely positive semidefinite cone $\cgpsd$.

\begin{proposition} 
\label{def:cgqindep}
For a graph $G$,  the parameter $\alpha_q(G)$ is equal to  the maximum  $t\in \N$ for which  there exists a matrix $X \in \cgpsd^{|V(G)|t + 1}$ (indexed by  $V(G) \times [t] \cup \{0\}$) satisfying the following conditions: 
\begin{align}
&X_{0,0} = 1, \label{eq:cqalpha-1} \tag{C1} \\
& \sum_{u \in V(G)} X_{0,ui}= 1\quad \forall i\in [t]\label{eq:cqalpha-sum2a} \tag{C2a},\\
&\sum_{u,v \in V(G)} X_{ui,vi} =  1\quad \forall i\in [t] \label{eq:cqalpha-sum-2} \tag{C2b},\\
&X_{ui,vj} = 0\quad \forall i\ne j\in [t], \, \forall u\simeq v \in V(G), \label{eq:cqalpha-orth1} \tag{O1}\\
&X_{ui,vi} =0 \quad \forall i \in [t], \, \forall  u\neq v \in V(G) \label{eq:cqalpha-orth2} \tag{O2}.
\end{align}
Moreover, the parameter  $\qindep(G)$ is equal to  the maximum integer $t$ for which  there exists a matrix $X \in \cgpsd^{|V(G)|t + 1}$ satisfying  (\ref{eq:cqalpha-1}), (\ref{eq:cqalpha-sum2a}), (\ref{eq:cqalpha-sum-2}) and  (\ref{eq:cqalpha-orth1}).
\end{proposition}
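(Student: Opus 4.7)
The plan is to establish both inequalities by translating between Gram representations of $\cgpsd$ matrices and tuples of psd matrices satisfying the conditions of Definition~\ref{def:gqindep}.

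For the easy direction ($\alpha_q(G)$ at most the conic parameter), I would start from a feasible tuple $\rho, \rho^u_i \in \mathcal{S}^d_+$ for $\alpha_q(G)$, with value $t$, and define the matrix $X$ indexed by $\{0\} \cup V(G)\times[t]$ as the Gram matrix of these psd matrices, i.e.\ $X_{0,0} = \langle\rho,\rho\rangle$, $X_{0,ui} = \langle\rho,\rho^u_i\rangle$, and $X_{ui,vj} = \langle\rho^u_i,\rho^v_j\rangle$. Then $X \in \cgpsd^{|V(G)|t+1}$ by definition. Condition~(\ref{eq:cqalpha-1}) is just $\langle\rho,\rho\rangle=1$; conditions (\ref{eq:cqalpha-sum2a}) and (\ref{eq:cqalpha-sum-2}) follow by expanding $\sum_u \rho^u_i = \rho$ inside the inner product with $\rho$ and with itself; and (\ref{eq:cqalpha-orth1}), (\ref{eq:cqalpha-orth2}) are restatements of (\ref{eq:qalpha-ort1}), (\ref{eq:qalpha-ort2}).

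For the harder direction, I take a feasible $X \in \cgpsd^{|V(G)|t+1}$ and a psd Gram representation $\{Y_0\}\cup\{Y_{ui} : u\in V(G),\,i\in[t]\} \subseteq \mathcal{S}^d_+$ of $X$. I set $\rho := Y_0$ and $\rho^u_i := Y_{ui}$. Conditions (\ref{eq:qalpha-1}), (\ref{eq:qalpha-ort1}) and (\ref{eq:qalpha-ort2}) are immediate from (\ref{eq:cqalpha-1}), (\ref{eq:cqalpha-orth1}) and (\ref{eq:cqalpha-orth2}) respectively. The nontrivial point is recovering the linear identity $\sum_u \rho^u_i = \rho$ of (\ref{eq:qalpha-sum}), which is \emph{not} directly a scalar condition on $X$ but a matrix identity on the Gram vectors.

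The key step is to let $Z_i := \sum_u Y_{ui} \in \mathcal{S}^d$, and observe that
\[
\|Y_0 - Z_i\|^2 = \langle Y_0, Y_0\rangle - 2\langle Y_0, Z_i\rangle + \langle Z_i, Z_i\rangle = 1 - 2\cdot 1 + 1 = 0,
\]
using (\ref{eq:cqalpha-1}), (\ref{eq:cqalpha-sum2a}) and (\ref{eq:cqalpha-sum-2}) respectively. Since the trace inner product is a genuine inner product on $\mathcal{S}^d$, this forces $Y_0 = Z_i$, i.e.\ $\sum_u \rho^u_i = \rho$, completing the verification that $(\rho, \rho^u_i)$ is feasible for $\alpha_q(G)$. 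This is the main (and only) subtle step; everything else is bookkeeping. The statement for $\qindep(G)$ follows by the same argument with (\ref{eq:cqalpha-orth2}) and (\ref{eq:qalpha-ort2}) dropped throughout.
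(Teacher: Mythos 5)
Your proof is correct and matches the paper's own argument essentially step for step: both directions use the Gram correspondence, with the key step in the reverse direction being the norm computation $\|\rho - \sum_u \rho^u_i\|^2 = 1 - 2 + 1 = 0$ from (C1), (C2a), (C2b) to recover the matrix identity (\ref{eq:qalpha-sum}). The forward direction's derivation of (C2a), (C2b) by expanding $\rho = \sum_u \rho^u_i$ inside inner products is likewise exactly the paper's.
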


\begin{proof}
Observe that if $X$ satisfies (\ref{eq:cqalpha-orth2}), then condition (\ref{eq:cqalpha-sum-2})  is equivalent to  the condition: $\sum_{u \in V(G)} X_{ui,ui} =1$ for all $i\in [t]$.
We first consider the parameter $\alpha_q(G)$. 

By Definition \ref{def:gqindep}, there exist positive semidefinite matrices $\rho, \rho^u_i$ (for $u\in V(G)$, $i\in [t]$) satisfying (\ref{eq:qalpha-1})-(\ref{eq:qalpha-ort2}).
Let $X$ denote the Gram matrix of $\rho, \rho^u_i$, i.e., $X_{0,0} = \langle \rho,\rho\rangle$, $X_{0,ui}=\langle \rho,\rho^u_i\rangle$ and $X_{ui,vj}=\langle \rho^u_i,\rho^v_j\rangle$ for all $u,v\in V(G)$, $i,j\in [t]$.  By construction,  $X$ belongs to the cone $\cgpsd^{|V(G)|t+1}$. Moreover, $X$   satisfies the conditions 
(\ref{eq:cqalpha-1}), (\ref{eq:cqalpha-orth1}) and (\ref{eq:cqalpha-orth2}) which correspond, respectively, to  (\ref{eq:qalpha-1}), (\ref{eq:qalpha-ort1}) and (\ref{eq:qalpha-ort2}).
Next, using (\ref{eq:qalpha-1}) and (\ref{eq:qalpha-sum}), we obtain that for any $i\in [t]$:
$1=\langle \rho,\rho\rangle =\langle \rho, \sum_u \rho^u_i\rangle= 
\langle \sum_u \rho^u_i,\sum_v\rho^v_i\rangle =\sum_u \sum_{v} \langle \rho^u_i,\rho^v_i\rangle $
which shows that $X$ also satisfies (\ref{eq:cqalpha-sum2a}) and  (\ref{eq:cqalpha-sum-2}).

Conversely, assume that  $X\in \cgpsd^{|V(G)|t+1}$ satisfies the conditions (\ref{eq:cqalpha-1}), (\ref{eq:cqalpha-sum2a}), (\ref{eq:cqalpha-sum-2}), (\ref{eq:cqalpha-orth1}) and (\ref{eq:cqalpha-orth2}). As   $X$ is completely positive semidefinite,  there exist positive semidefinite matrices $\rho,\rho^u_i$  forming a Gram representation of $X$; we show that the matrices $\rho,\rho^u_i$ satisfy the conditions of Definition \ref{def:gqindep}.
It  is clear that   (\ref{eq:qalpha-1}), (\ref{eq:qalpha-ort1}) and (\ref{eq:qalpha-ort2}) hold.
Next, for any $i\in [t]$, we have:
$\| \rho - \sum_{u \in V(G)} \rho_i^u \|^2 = 1 - 2 \sum_{u \in V(G)}X_{0,ui} + \sum_{u,v \in V(G)} X_{ui,vi} =0,$
using (\ref{eq:cqalpha-1}), (\ref{eq:cqalpha-sum2a}) and (\ref{eq:cqalpha-sum-2}). This shows  (\ref{eq:qalpha-sum})  and thus  concludes the proof for $\alpha_q(G)$.

The proof is analogous for the parameter $\qindep(G)$ and thus omitted. 
 \end{proof}

\noindent
Next we observe that, in Proposition \ref{def:cgqindep},  we can restrict without loss of generality to solutions that are invariant under action of the permutation group $\Sym(t)$ (consisting of all permutations of $[t] = \{1,\dots, t\}$). 
We sketch this well known symmetry reduction, which has been used in particular for the study of the chromatic number in \cite{Gvozdenovic:2008}.

Given  $Y \in \mathcal S^{|V(G)|t+1}$  and a permutation $\pi \in$ Sym($t$), define the new matrix $\pi(Y)$ with  entries $\pi(Y)_{00} = Y_{00}$, $\pi(Y)_{0,ui} = Y_{0,u\pi(i)}$ and  
$\pi(Y)_{ui,vj}= Y_{u\pi(i),v\pi(j)}$ for $i,j \in [t], u,v \in V(G)$,  and the matrix
$Y'= \frac{1}{\lvert \text{Sym}(t) \lvert} \sum_{\pi \in \text{Sym}(t)} \pi(Y)$, called the {\em symmetrization} of $Y$ under action of $\Sym(t)$.
Then, $Y'$ is invariant under action of $\Sym(t)$, i.e., $\pi(Y')=Y'$ for all $\pi\in \Sym(t)$, and thus   $Y'$ has the following block-form:
\beq \label{eq:block}
 \begin{pmatrix}
\alpha & a^T & a^T & \ldots & a^T \\
a & A & B &\ldots & B \\
a & B & A &\ldots & B \\
\vdots & \vdots & \vdots & \ddots & \vdots \\
a & B  & B  & \ldots & A
\end{pmatrix}\
\text{ for some } \alpha \in \R, a\in \R^{|V(G)|}, \ A,B \in \mathcal S^{|V(G)|}.
\eeq

\noindent
Notice that the programs described in Proposition \ref{def:cgqindep} are invariant under action of $\Sym(t)$; that is, if $Y$ is feasible for one of them then any permutation $\pi(Y)$ is feasible too and thus its symmetrization $Y'$ as well. 
Therefore both programs have a feasible solution in block-form (\ref{eq:block}) (assuming some exists).

This invariance property, which  holds not only for the cone $\cgpsd$ but also for the cones $\mathcal S_+$, $\cp$ and $\dnn$, will be useful, together with the following lemma, for proving Proposition \ref{lem:theta'equiv} below.

\begin{lemma}\label{lem:block}(see e.g. \cite{Gvozdenovic:2008})
Let $Y$ be a $t \times t$ block-matrix, of the form: 
\begin{equation} \label{eqblock}
Y = 
\underbrace{\begin{pmatrix}
A & B & \ldots & B \\
B & A & \ldots & B \\
\vdots & \vdots & \ddots & \vdots \\
B & B  & \ldots & A
\end{pmatrix}}_\text{t blocks},
\end{equation}
having $A$ as diagonal blocks and $B$ as off-diagonal blocks, where $A,B\in \mathcal S^k$ (for some $k\ge 1$).
Then, 
 $Y \succeq 0\Longleftrightarrow A - B \succeq 0 \text{ and } A + (t-1)B \succeq 0.$
\end{lemma}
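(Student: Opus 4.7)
My approach is to exploit the $\Sym(t)$-invariance of $Y$ (it is invariant under simultaneous permutation of its row and column blocks) to block-diagonalize it by a single orthogonal change of basis on the block-indexing space.

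The first step is to rewrite $Y$ in Kronecker-product form. Directly from the definition,
\[
Y \;=\; I_t \otimes A + (J_t - I_t) \otimes B \;=\; I_t \otimes (A-B) + J_t \otimes B,
\]
where $I_t$ and $J_t$ are the $t \times t$ identity and all-ones matrices. Next, I would diagonalize the $t \times t$ part. Letting $P := \tfrac{1}{t} e e^T$ and $Q := I_t - P$, one has $J_t = tP$, $I_t = P+Q$, and $P,Q$ are orthogonal projections with $PQ=0$. Substituting gives
\[
Y \;=\; P \otimes \bigl(A + (t-1)B\bigr) \;+\; Q \otimes (A - B).
\]

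The second step is to pick any orthogonal matrix $U \in \mathbb{R}^{t \times t}$ whose first column is $e/\sqrt{t}$; then $U^T P U = e_1 e_1^T$ and $U^T Q U = I_t - e_1 e_1^T$. Conjugating by the orthogonal matrix $U \otimes I_k$ therefore transforms $Y$ into the block-diagonal matrix
\[
(U \otimes I_k)^T\, Y\, (U \otimes I_k) \;=\; \bigl(A + (t-1)B\bigr) \,\oplus\, \underbrace{(A-B) \oplus \cdots \oplus (A-B)}_{t-1 \text{ copies}} .
\]
Since positive semidefiniteness is preserved under orthogonal conjugation and under direct sums, $Y \succeq 0$ is equivalent to $A+(t-1)B \succeq 0$ together with $A-B \succeq 0$ (the case $t=1$ being trivial because then $Q = 0$ and the statement reduces to $A \succeq 0$). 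This is exactly the claimed equivalence.

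There is no real obstacle here; the only thing to be careful about is the direction of the equivalence when $t=1$, which is why I flag that degenerate case explicitly. Everything else is a short algebraic identity plus the observation that the Kronecker-product decomposition lines up with a spectral decomposition of the all-ones matrix.
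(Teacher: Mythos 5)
The paper does not prove Lemma~\ref{lem:block}; it states it as a known fact with a citation to Gvozdenovi\'c--Laurent. So there is no paper proof to compare against, and I simply check your argument on its own terms. It is correct, and it is the standard argument. The key identity
\[
Y \;=\; I_t\otimes A + (J_t-I_t)\otimes B \;=\; P\otimes\bigl(A+(t-1)B\bigr) + Q\otimes(A-B),
\qquad P=\tfrac1t ee^T,\ Q=I_t-P,
\]
is right (expand $I_t=P+Q$, $J_t=tP$), and since $P,Q$ are complementary orthogonal projections with $\rank P=1$ and $\rank Q=t-1$, conjugating by $U\otimes I_k$ for any orthogonal $U$ with first column $e/\sqrt t$ yields the block-diagonal form $\bigl(A+(t-1)B\bigr)\oplus(A-B)^{\oplus(t-1)}$, and the equivalence follows. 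One small point of precision: for $t=1$ the off-diagonal block $B$ simply does not occur in $Y$, so the lemma as literally written (which still asserts $A-B\succeq 0$ on the right-hand side) is not quite correct; the intended reading is $t\ge 2$, and indeed in the paper's application $t$ is a number of blocks in a symmetrized matrix so $t\ge 2$ is the relevant regime. You flag the degenerate case, which is the right instinct, but it would be cleaner to say that the lemma is meant for $t\ge 2$ rather than that it ``reduces to $A\succeq 0$'' (since the stated conclusion for $t=1$ still mentions $B$).
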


Next we consider again  the programs introduced in Proposition \ref{def:cgqindep} for defining the parameters $\alpha_q(G)$ and $\qindep(G)$, and we investigate what is their optimum value when replacing the cone $\cgpsd$ by any of the two cones $\cp$ or $\dnn$. We show that when using $\cp$ we find the classical stability number $\alpha(G)$ while, when using the cone $\dnn$, we find the parameter $\lfloor \vartheta'(G)\rfloor$, see Corollary \ref{cor:theta'equiv} below. 

For this  we will need two preliminary results, which we will use in the proof of Proposition~\ref{lem:theta'equiv} below.
The first one (Theorem \ref{lem:2*2cp})  gives a  property of completely positive matrices, which we will apply later for the choice of $B$ 
 having {\small $\left(\begin{matrix} 1 & -1 \cr -1 & 1 \end{matrix}\right)$}
as its $2\times 2$ nonzero principal submatrix.

\begin{theorem}\cite{Barioli:2001}\label{lem:2*2cp}
Let $A,B \in \mathcal S^n$. 
Assume  that $A$ is completely positive, $B$ is positive semidefinite with all its entries equal to zero except for a $2 \times 2$ principal submatrix, and that $A + B$ is a nonnegative matrix. 
Then $A + B$ is completely positive.
\end{theorem}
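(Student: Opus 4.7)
The plan is to reduce to the case where $B$ has rank one, and in that case to modify a completely positive decomposition of $A$ in order to absorb $B$.

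\textbf{Rank-one case.} Write $B=cc^T$ with $c$ supported on $\{i,j\}$. If $c_ic_j\ge 0$, then $\pm c\in\R^n_+$ and hence $B\in\cp^n$, so $A+B\in\cp^n$ as a sum of two completely positive matrices. Otherwise $B_{ij}=c_ic_j<0$, and the nonnegativity hypothesis $A+B\ge 0$ entrywise forces $A_{ij}\ge|c_ic_j|$. Starting from a CP decomposition $A=\sum_k y_ky_k^T$ with $y_k\in\R^n_+$, the plan is to construct new vectors $z_k\in\R^n_+$, differing from $y_k$ only in coordinates $i$ and $j$, so that $\sum_k z_kz_k^T=A+B$. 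Writing $z_k=y_k+a_ke_i+b_ke_j$, the equations split into two kinds: linear constraints $\sum_k a_ky_k(\ell)=\sum_k b_ky_k(\ell)=0$ for every $\ell\notin\{i,j\}$, which preserve the off-block entries of $A$; and three scalar equations that match the prescribed changes in the entries of the $2\times 2$ principal submatrix $\widetilde B$ of $B$. The idea is to realise the change by local exchange steps on pairs of summands with $y_k(i), y_k(j)>0$, rotating two rank-one terms $y_ky_k^T,y_{k'}y_{k'}^T$ into a new pair whose $(i,j)$-mass differs by a prescribed amount while the other entries are preserved. Since $\sum_k y_k(i)y_k(j)=A_{ij}\ge|c_ic_j|$, enough $(i,j)$-mass is available; by first refining the decomposition into many small copies, each individual rotation can be made arbitrarily small, keeping all coordinates of the modified vectors in $\R_+$.

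\textbf{Reduction from rank two to rank one.} If $\widetilde B$ has rank two, diagonalise it as $\widetilde B=\lambda_1v_1v_1^T+\lambda_2v_2v_2^T$ with $\lambda_r\ge 0$ and $v_r\in\R^2$. I would choose the splitting so that the first rank-one summand has off-diagonal entry of the same sign as $B_{ij}$ and of magnitude at most $|B_{ij}|$; then $A$ plus that first summand remains entrywise nonnegative (its $(i,j)$-entry is still $\ge 0$ and its diagonal entries $(i,i),(j,j)$ can only increase), the rank-one case applies, and one iterates with the remaining rank-one summand, whose hypotheses are again satisfied by construction.

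\textbf{Main obstacle.} The crux is the rank-one case with $B_{ij}<0$: producing the explicit exchange argument that spends the ``budget'' $A_{ij}\ge|c_ic_j|$ to construct $z_k\in\R^n_+$ satisfying both the linear off-block constraints and the three matching equations simultaneously. I expect preservation of the off-block entries $A_{i\ell}$ and $A_{j\ell}$ to be the subtle part, since any naive adjustment of $y_k(i)$ or $y_k(j)$ also alters those entries; the pairwise-rotation scheme above is precisely designed so that these external contributions cancel in pairs.
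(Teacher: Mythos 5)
The paper does not prove this theorem; it is cited from Barioli (2001), so there is no in-paper proof to compare against. Evaluating your sketch on its own merits, there is a genuine gap precisely at the step you flag as ``the main obstacle,'' and the mechanism you propose for it does not work as described.

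Consider the rank-one case $B=cc^T$ with $c=c_ie_i+c_je_j$, $c_ic_j<0$, and a CP decomposition $A=\sum_k y_ky_k^T$. You want $z_k=y_k+a_ke_i+b_ke_j\in\R^n_+$ with $\sum_k z_kz_k^T=A+B$, and you observe the off-block constraints $\sum_k a_ky_k(\ell)=\sum_k b_ky_k(\ell)=0$ for all $\ell\notin\{i,j\}$. But the ``pairwise rotation'' you suggest is severely constrained: if you perturb only a pair $(k,k')$, then for every $\ell\notin\{i,j\}$ you need $a_ky_k(\ell)+a_{k'}y_{k'}(\ell)=0$, which (for nonzero $(a_k,a_{k'})$) forces the restricted vectors $(y_k(\ell))_{\ell\notin\{i,j\}}$ and $(y_{k'}(\ell))_{\ell\notin\{i,j\}}$ to be proportional. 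So pairwise exchanges are only available between summands whose off-block parts are parallel, which in practice means: between copies of a single $y_k$. If one carries out the computation for copies of a single $y_k$ (splitting $y_ky_k^T=\sum_m t_m\,y_ky_k^T$ and perturbing each copy), the off-block constraints force the perturbation coefficients $(a_m,b_m)$ to all be proportional to $(c_i,c_j)$, and one finds that the operation is exactly an orthogonal transformation mixing $y_k$ with $c$; the nonnegativity of all the resulting vectors then requires $y_k(i)\,y_k(j)\ge c_ic_j^{\,}\cdot(-1)=|c_ic_j|$. Thus the scheme succeeds only if some \emph{single} summand already carries all the needed $(i,j)$-mass. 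The hypothesis $A_{ij}=\sum_k y_k(i)y_k(j)\ge|c_ic_j|$ does not give this: the mass can be spread thinly over many summands with $y_k(i)y_k(j)<|c_ic_j|$ for every $k$. Moreover, ``refining into many small copies'' moves in the wrong direction, since replacing $y_k$ by $\sqrt{t}\,y_k$ decreases the per-summand product $t\,y_k(i)y_k(j)$. Combining mass across genuinely different $y_k$'s is exactly what the off-block constraints forbid, so a different idea is needed here (this is, in fact, where Barioli's argument does real work).

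The rank-two reduction is also stated with a sign error. In the eigendecomposition $\widetilde B=\lambda_1v_1v_1^T+\lambda_2v_2v_2^T$, orthogonality $v_1\perp v_2$ forces $v_1(1)v_1(2)$ and $v_2(1)v_2(2)$ to have opposite signs (or one to vanish), so the summand whose off-diagonal entry has the \emph{same} sign as $B_{ij}$ has magnitude at least $|B_{ij}|$, not at most; adding it to $A$ need not give a nonnegative matrix. This part can be repaired: use a Cholesky-type splitting $\widetilde B=\tfrac{1}{p}(p,q)^T(p,q)+\mathrm{diag}(0,\,r-q^2/p)$ (when $p=\widetilde B_{11}>0$), so the first summand is rank one with off-diagonal entry exactly $B_{ij}$, and the second summand is a nonnegative diagonal matrix, hence completely positive, and $A+B_1\ge 0$ follows from $A+B\ge 0$. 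But the rank-one step still lacks a valid argument, so the proof as a whole is incomplete.
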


Given a graph $G$ and an integer $t\ge 1$, 
we introduce the graph $G_t$ which models the  orthogonality conditions (\ref{eq:cqalpha-orth1}) and (\ref{eq:cqalpha-orth2}), i.e., its vertex set is $V(G) \times [t]$ and two distinct vertices are adjacent in $G_{t}$ if $i \neq j$ and $u\simeq v$, or if $i = j$ and $u \neq v$. 
The second result (Lemma \ref{lemthetap}) gives a property of the parameter $\vartheta'$ for the graph $G_t$.

\begin{lemma}\label{lemthetap}
Let $G$ be a graph and let  $t\ge 1$ be an integer such that $\vartheta'(G)\ge t$.
Then, $\vartheta'(G_t)\ge t$.
\end{lemma}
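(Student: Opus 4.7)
The plan is to construct a matrix $Y$ feasible for the max-formulation of $\vartheta'(G_t)$ in~(\ref{eqthetas-K}) with $\langle J,Y\rangle\ge t$, built from an optimum $X^*$ of $\vartheta'(G)$. Since $G_t$ carries a $\Sym(t)$-action permuting its layers, symmetrization lets me restrict to $Y$ of block form $Y=I_t\otimes A+(J_t-I_t)\otimes B$ with $A,B\in\mathcal S^n$. The $E(G_t)$-zero constraints translate into: $A$ must be diagonal (since within any layer all distinct vertices are adjacent), and $B$ must have zero diagonal and vanish on $E(G)$ (since between distinct layers every $u\simeq v$ gives an edge, while between distinct layers with $u\neq v$, $uv\notin E(G)$ there is no edge).

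Fix an optimum $X^*$ with $\Tr X^*=1$, $T^*:=\langle J,X^*\rangle=\vartheta'(G)\ge t$, set $D^*:=\text{diag}(X^*)$, and define
\[
A:=\tfrac{1}{t}D^*,\qquad B:=\tfrac{1}{t(T^*-1)}(X^*-D^*).
\]
A direct computation then gives $Y\ge 0$ entrywise, the correct zero pattern, $\Tr Y=\Tr D^*=1$, and $\langle J,Y\rangle=t\Tr A+t(t-1)\langle J_n,B\rangle=1+(t-1)=t$. By Lemma~\ref{lem:block}, $Y\succeq 0$ reduces to $A+(t-1)B\succeq 0$ and $A-B\succeq 0$, which after clearing denominators read
\[
(T^*-t)D^*+(t-1)X^*\succeq 0\qquad\text{and}\qquad T^*D^*\succeq X^*.
\]
The first is immediate from $T^*\ge t$ and $D^*,X^*\succeq 0$, so the entire content of the proof is the spectral inequality $X^*\preceq T^*D^*$.

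To establish this I will use the natural reformulation of $\vartheta'(G)$ as a top-eigenvalue problem. After reducing to the support of $D^*$ (rows $u$ with $X^*_{uu}=0$ have the whole $u$-th row and column of $X^*$ zero since $X^*\succeq 0$, and such $u$ contribute only zero blocks to $Y$ that extend harmlessly), I may assume $D^*\succ 0$ and let $\hat X:=D^{*-1/2}X^*D^{*-1/2}$; this is a PSD, entrywise nonnegative, unit-diagonal matrix with $\hat X_{uv}=0$ on $E(G)$. Then $v^*:=D^{*1/2}\mathbf 1$ is a nonnegative unit vector with $(v^*)^T\hat X v^*=\langle J,X^*\rangle=T^*$, giving $\lambda_{\max}(\hat X)\ge T^*$. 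Conversely, by Perron--Frobenius the top eigenvalue of $\hat X$ admits a nonnegative unit eigenvector $w$; then $\widetilde X:=\text{diag}(w)\,\hat X\,\text{diag}(w)$ is feasible for $\vartheta'(G)$ with value $w^T\hat X w=\lambda_{\max}(\hat X)$, so optimality of $T^*$ forces $\lambda_{\max}(\hat X)\le T^*$. Hence $\lambda_{\max}(\hat X)=T^*$, $\hat X\preceq T^*I$, and conjugation by $D^{*1/2}$ yields the required $X^*\preceq T^*D^*$.

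The main obstacle is precisely this spectral bound $X^*\preceq T^*D^*$; once in hand, the rest of the argument is direct bookkeeping with the block ansatz and Lemma~\ref{lem:block}.
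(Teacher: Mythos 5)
Your proposal is correct and follows essentially the same route as the paper's proof: your matrix $Y$ is exactly the paper's $\widetilde M = M/\bigl(t(T-1)\bigr)$, and the one non-trivial inequality $X^*\preceq T^*D^*$ is established by precisely the Perron--Frobenius plus Hadamard-product argument used in the paper. The only cosmetic difference is that you reduce $Y\succeq 0$ to $A-B\succeq 0$ and $A+(t-1)B\succeq 0$ via Lemma~\ref{lem:block}, while the paper reads off the eigenvalues of the Kronecker product $(I_n-D^{-1/2}XD^{-1/2})\otimes(J_t-I_t)$ directly; these are equivalent computations.
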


\begin{proof}
Let $X$ be a matrix which is an optimal solution for the program (\ref{eqthetas-K}) defining $\vartheta'(G)$, meaning that $\langle J,X\rangle =\vartheta'(G)$.
 We set $n=|V(G)|$ and $T=\vartheta'(G)$.
Define  the diagonal matrix $D\in\mathcal S^n$ with  $D_{uu}=X_{uu}$  for all $u\in V$ and
 the matrix $M=  (T-1)D\otimes I_t - (D-X)\otimes (J_t-I_t)$ in $\mathcal S^{nt}$.
 Then, $M$ is entrywise nonnegative, its entries  are zero at all positions corresponding to edges of $G_t$,
$\Tr(M)= (T-1)t$, and $\langle J,M\rangle = (T-1)t^2$. Hence, if we can show that $M\succeq 0$, then the matrix $\tilde M={M\over t(T-1)}$ is feasible for the program defining $\vartheta'(G_t)$ with $\langle J, \tilde M\rangle =t$,  thus showing  the desired inequality $\vartheta'(G_t)\ge t$.

We now  show that $M\succeq 0$. We may assume  that all diagonal entries of $X$ are positive (else replace $X$ by its principal submatrix having only positive diagonal entries). Then, $D\succ 0$ and  define 
$M'= (D^{-1/2} \otimes I_t) M (D^{-1/2}\otimes I_t) = (T-1) I_{nt} -(I_n - D^{-1/2}XD^{-1/2}) \otimes (J_t-I_t)$.
Clearly, $M\succeq 0$ if and only if $M'\succeq 0$, which in turn is equivalent to checking that the eigenvalues of the matrix 
$Y=(I_n - D^{-1/2}XD^{-1/2}) \otimes (J_t-I_t)$ are at most $T-1$.
Let $0\le \lambda_1\le \ldots \le \lambda_n$ denote the eigenvalues of the positive semidefinite matrix $D^{-1/2}XD^{-1/2}$.
Then, the eigenvalues of $Y$ are 
$(1-\lambda_i)(t-1)$ and $(1-\lambda_i)(-1)$ for $i\in [n]$. Clearly, $(1-\lambda_i)(t-1)\le t-1\le T-1$ for all $i\in [n]$ and thus it suffices to show that $(1-\lambda_i)(-1) = \lambda_{i} - 1 \le T-1$ for all $i \in [n]$ or, equivalently, that $\lambda_n\le T$.
To this end, notice that 
since the matrix $D^{-1/2}XD^{-1/2}$ is nonnegative, by Perron-Frobenius it admits a nonnegative (unit) eigenvector $u$ for its largest eigenvalue $\lambda_n$. Define the  matrix $X'= D^{-1/2}XD^{-1/2}\circ uu^T\in \mathcal S^n$ (taking the entrywise product). Then, $X'\in \dnn^n$, $X'_{uv}=0$ if $\{u,v\}\in E(G)$, $\Tr(X')=\|u\|^2=1$, and
$\langle J,X'\rangle = u^T D^{-1/2}XD^{-1/2} u =\lambda_n$. As $X'$ is feasible for the program defining $\vartheta'(G)$, it follows that $\lambda_n\le \vartheta'(G)=T$.
\end{proof}

\begin{proposition}\label{lem:theta'equiv}
Let $G$ be a graph, let $t\ge 1$ be an integer,  and let $\mathcal K$ denote  the   cone $\dnn$ or $\cp$.  The following statements are equivalent.
\begin{itemize}
\item[\namedlabel{1-stab}{(i)}] There exists a matrix $X \in \mathcal{K}^{|V(G)|}$ satisfying $\floor{\langle J, X \rangle} = t$, $\Tr(X) = 1$ and $X_{uv} = 0$  for all $\{u,v\} \in E(G)$.
\item[\namedlabel{2-stab}{(ii)}] There exists a matrix $X \in \mathcal{K}^{|V(G)|t+1}$ satisfying the conditions (\ref{eq:cqalpha-1}), (\ref{eq:cqalpha-sum2a}), (\ref{eq:cqalpha-sum-2}), (\ref{eq:cqalpha-orth1}) and (\ref{eq:cqalpha-orth2}).
\item[\namedlabel{3-stab}{(iii)}] There exists a matrix $X \in \mathcal{K}^{|V(G)|t+1}$ satisfying the conditions (\ref{eq:cqalpha-1}), (\ref{eq:cqalpha-sum2a}), (\ref{eq:cqalpha-sum-2}) and (\ref{eq:cqalpha-orth1}).
\end{itemize}
\end{proposition}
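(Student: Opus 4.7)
The plan is to close the cycle of implications by establishing both \ref{2-stab} $\Leftrightarrow$ \ref{3-stab} and \ref{2-stab} $\Leftrightarrow$ \ref{1-stab}. Since \ref{3-stab} is obtained from \ref{2-stab} by dropping condition (\ref{eq:cqalpha-orth2}), the implication \ref{2-stab} $\Rightarrow$ \ref{3-stab} is immediate. For \ref{3-stab} $\Rightarrow$ \ref{2-stab} I would iteratively zero out each entry $\lambda := \tilde X_{(u,i),(v,i)}$ with $u\ne v$ by adding the positive semidefinite correction $B \in \mathcal{S}^{nt+1}$ which vanishes outside the $2\times 2$ principal submatrix indexed by $\{(u,i),(v,i)\}$, where it equals $\lambda \left(\begin{matrix} 1 & -1 \cr -1 & 1\end{matrix}\right)$. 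This correction sends $(\tilde X + B)_{(u,i),(v,i)}$ to $0$ and raises the two corresponding diagonal entries by $\lambda$, so $\tilde X + B$ remains entrywise nonnegative. For $\mathcal K = \cp$, Theorem \ref{lem:2*2cp} provides the crucial preservation of complete positivity; for $\mathcal K = \dnn$, $\tilde X + B$ is a sum of two psd matrices and entrywise nonnegative, hence in $\dnn$. The modifications touch neither row/column $0$, nor the off-diagonal blocks, nor the sum $\sum_{u,v}\tilde X_{(u,i),(v,i)}$ (the net change $+\lambda+\lambda-\lambda-\lambda$ vanishes), so conditions (\ref{eq:cqalpha-1})--(\ref{eq:cqalpha-orth1}) are preserved while (\ref{eq:cqalpha-orth2}) holds after finitely many such steps.

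For \ref{2-stab} $\Rightarrow$ \ref{1-stab} I would symmetrize $\tilde X$ by averaging its $\Sym(t)$-orbit (a convex combination of permutation-congruences, which preserves $\mathcal K$) to reduce to the block form (\ref{eq:block}) with $\alpha=1$, $e^Ta=1$, $A$ diagonal with $\Tr(A)=1$ (from (\ref{eq:cqalpha-orth2}) and (\ref{eq:cqalpha-sum-2})), and $B_{uv}=0$ for $u\simeq v$ (from (\ref{eq:cqalpha-orth1})). Let $\pi\colon \R^{nt+1}\to \R^{n+1}$ be the linear map summing coordinates over $i\in [t]$; since $\pi$ is a $\{0,1\}$-matrix, the congruence $\pi\tilde X\pi^T = \left(\begin{matrix} 1 & ta^T \cr ta & tA + t(t-1)B\end{matrix}\right)$ lies in $\mathcal K^{n+1}$, and rescaling its $n\times n$ lower-right block gives $X := A + (t-1)B \in \mathcal K^n$. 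Diagonality of $A$ combined with $B_{uv}=0$ on edges forces $X_{uv}=0$ for $\{u,v\}\in E(G)$, and $\Tr(X)=\Tr(A)=1$. Finally, the Schur complement of $\pi\tilde X\pi^T$ at the $(0,0)$ entry yields $X\succeq t\,aa^T$, so $\langle J,X\rangle = e^TXe \ge t(e^Ta)^2 = t$; Remark \ref{remK} then rescales $X$ to a feasible matrix with $\lfloor\langle J,X\rangle\rfloor = t$, establishing \ref{1-stab}.

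The reverse implication \ref{1-stab} $\Rightarrow$ \ref{2-stab} is treated separately for each cone. For $\mathcal K = \cp$: Theorem \ref{theodeK} gives $\alpha(G)\ge t$; picking a stable set $\{u_1,\dots,u_t\}$ of $G$, the Latin-square ansatz $\rho := \frac{1}{\sqrt t}\sum_{k\in[t]} e_k \in \R^t_+$ and $\rho^i_u := \frac{1}{\sqrt t}\, e_{k+i \bmod t}$ if $u=u_k$, else $\rho^i_u := 0$, yields nonnegative Gram vectors whose Gram matrix is a witness in $\cp^{nt+1}$; conditions (\ref{eq:cqalpha-1})--(\ref{eq:cqalpha-orth2}) follow directly from the disjoint-support pattern enforced by the cyclic shift. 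For $\mathcal K = \dnn$: with $T := \langle J,X\rangle \ge t$ and $D := \mathrm{diag}(X_{uu})$, adapt the construction in the proof of Lemma \ref{lemthetap} to form $\tilde M := D\otimes I_t + (X-D)\otimes (J_t-I_t)/(T-1) \in \dnn^{nt}$, psd-ness following from Lemma \ref{lem:block} and the bound $TD\succeq X$ (the Perron--Frobenius ingredient of Lemma \ref{lemthetap}). Setting $a := \tilde M e/t$ and $\tilde X_{00} := 1$, one computes $e^T\tilde M e = t^2$, so Cauchy--Schwarz yields $(v^Ta)^2 = (v^T\tilde M e)^2/t^2 \le v^T\tilde M v$, i.e.\ the Schur bound $\tilde M \succeq aa^T$, and hence $\tilde X \in \dnn^{nt+1}$. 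Verification of (\ref{eq:cqalpha-1})--(\ref{eq:cqalpha-orth2}) is then a direct reading of the block structure of $\tilde M$.

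The principal technical obstacle is the \ref{3-stab} $\Rightarrow$ \ref{2-stab} step for $\mathcal K = \cp$: the preservation of complete positivity under the $2\times 2$ rank-one correction is not visible from first principles, and this is precisely where Theorem \ref{lem:2*2cp} provides the decisive input. A secondary subtlety in the \ref{1-stab} $\Rightarrow$ \ref{2-stab} step for $\mathcal K = \dnn$ is the Perron--Frobenius estimate $TD\succeq X$ powering the construction of $\tilde M$, which is inherited from Lemma \ref{lemthetap}.
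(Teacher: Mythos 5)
Your proof is essentially correct and rests on the same key ingredients as the paper's: the $2\times2$ psd corrections (with Theorem~\ref{lem:2*2cp} for the $\cp$ case) to pass from~\ref{3-stab} to~\ref{2-stab}, Theorem~\ref{theodeK} for the $\cp$ case of~\ref{1-stab}$\Rightarrow$\ref{2-stab}, and the Perron--Frobenius estimate (the content of Lemma~\ref{lemthetap}) for the $\dnn$ case. Where you genuinely diverge is in two places, and both are simplifications worth noting. For $\dnn$ in~\ref{1-stab}$\Rightarrow$\ref{2-stab} you inline the Kronecker-product construction $\tilde M = D\otimes I_t + (X-D)\otimes(J_t-I_t)/(T-1)$ directly from the optimal $X$, bypassing entirely the paper's detour through $\vartheta'(G_t)\ge t$, Remark~\ref{remK}, and re-symmetrization; the Cauchy--Schwarz bound $\tilde M\succeq aa^T$ then replaces the paper's Gram-vector argument for attaching the $0$-th row. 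For $\cp$ you construct a rank-$t$ Latin-square witness, whereas the paper uses the simpler rank-$1$ witness $yy^T$ with $y=(1,e_1,\dots,e_t)$ (which in fact already satisfies (\ref{eq:cqalpha-orth2}), so the paper too could have gone straight to~\ref{2-stab}). Your~\ref{2-stab}$\Rightarrow$\ref{1-stab} via the aggregation map $\pi$ and the Schur complement is a cosmetic repackaging of the paper's $x_u=\sum_i y_i^u$ computation.

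Two small gaps to close. First, your $\dnn$ construction divides by $T-1$ and so breaks down when $T=\langle J,X\rangle=1$, which forces $t=1$; the paper handles $t=1$ separately (one checks directly that $\ref{3-stab}$ is trivially satisfiable), and you should do the same (the analogous index-$(k+i\bmod t)$ convention in your Latin-square witness also degenerates at $t=1$). Second, when invoking Lemma~\ref{lem:block} to certify $\tilde M\succeq 0$ you cite only the bound $TD\succeq X$; that gives the condition $D-(X-D)/(T-1)\succeq 0$, but the other Lemma~\ref{lem:block} condition $D+(t-1)(X-D)/(T-1)\succeq 0$, equivalently $(T-t)D+(t-1)X\succeq 0$, also needs $T\ge t$, which you should state explicitly (it follows from $\lfloor T\rfloor=t$). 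Neither gap is conceptual, but both should be filled.
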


\begin{proof} 
Set $n=|V(G)|$. 
We will show the implications   $\ref{1-stab} \Rightarrow\ref{3-stab} \Rightarrow$ $\ref{2-stab} \Rightarrow$ $\ref{1-stab}$. 
Notice that statement $\ref{1-stab}$ is equivalent to $\vartheta^{\mathcal K}(G) \ge t$.
First we consider  the case $\mathcal K=\dnn$.

$\ref{1-stab} \Rightarrow\ref{3-stab}$:  Assume first $t=1$.
If  $\ref{1-stab}$ holds with $t=1$, then by Remark \ref{remK} there exists  a matrix $X\in \dnn^n$ with $\Tr(X)=\langle J,X\rangle =1$ and thus $X_{uv}=0$ for all $u\ne v\in V(G)$.
Let  $x_u$ (for all $u\in V(G)$) be a Gram representation of $X$, set $x_0=\sum_{u\in V(G)}x_u$, and define the matrix $Y'\in \mathcal S^{n+1}$ with Gram representation $x_0,x_u$ ($u\in V(G)$). Then, $Y'$ satisfies $\ref{3-stab}$.

Assume now $t\ge 2$.
If $\ref{1-stab}$ holds, then $\vartheta'(G) \ge t$ and from Lemma \ref{lemthetap} we can conclude that $\vartheta'(G_t)\ge t$. Thus,
by Remark \ref{remK}, there exists a matrix $X\in \mathcal S^{nt}$ feasible for the program  (\ref{opt:theta-str}) defining $\vartheta'(G_t)$ with value 
$\langle J,X\rangle =t$.
Hence the matrix $Y=tX \in \dnn^{nt}$ satisfies $\langle J,Y\rangle =t^2$, 
$\Tr(Y)=t$, and $Y_{ui,vj}=0$ for all edges $\{(u,i),(v,j)\}$ of $G_t$. Moreover, after symmetrization by $\Sym(t)$, we can assume that $Y$ has the block-form (\ref{eqblock}),
where $A$ is a diagonal matrix and $B_{uv}=0$ for all edges $\{u,v\}$ of $G$. 
Then, $t=\Tr(Y)=t\Tr(A)=t \langle J,A\rangle$ and
$t^2=\langle J,Y\rangle = t\langle J,A\rangle + t(t-1)\langle J,B\rangle$, implying
$\Tr(A)=\langle J,A\rangle =\langle J,B\rangle =1$ (since $t\ge 2$).

Let $\{y_u^i: u \in V(G),i\in [t]\}$ be a Gram factorization of $Y$, i.e., 
 $Y_{ui,vj} = \langle y_u^{i}, y_v^j\rangle$  for all $ i,j \in [t]$ and $u,v \in V(G)$.
Fix $i_0\in [t]$ and  define the vector $y = \sum_{u \in V(G)} y_u^{i_0}$. 
Then, $\langle y,y \rangle = \sum_{v \in V(G)} \langle y,y_v^{i_0} \rangle = \sum_{u,v \in V(G)} \langle y_{u}^{i_{0}},y_v^{i_0} \rangle = \langle J,A \rangle = \Tr(A) = 1$ and, for any $j \in [t]\backslash\{i_0\}$, we have  $\sum_{v \in V(G)} \langle y,y_v^j \rangle = \sum_{u,v \in V(G)} \langle y_u^{i_0},y_v^j \rangle = \langle J,B \rangle = 1$. Define $Y'$ to be the Gram matrix of the vectors  $y,y_u^i$ (for $u \in V(G),i\in [t])$.
From the properties just explained, we see that $Y' \in\dnn^{nt+1}$ satisfies $\ref{3-stab}$.

\smallskip
$\ref{3-stab} \Rightarrow$ $\ref{2-stab}$:
Assume  that $Y'$  satisfies $\ref{3-stab}$; we construct a new matrix $Y$ satisfying $\ref{2-stab}$.
For this, consider the $(i,i)$-th diagonal block $Y'[ii]=(Y'_{ui,vi})_{u,v\in V(G)}$ of $Y'$. 
It suffices to show how to modify each $Y'[ii]$ in such a way that its off-diagonal entries become  zero. The idea is simple: just `move' the value of each off-diagonal entry $Y'_{ui,vi}$ to the diagonal entry $Y'_{ui,ui}$. Formally, for any $u\ne v\in V(G)$, define the matrix $F^{uv}\in \mathcal S^n$ with  entries $F^{uv}({uv})=F^{uv}({vu})=-1$, $F^{uv}({uu})=F^{uv}({vv})=1$ and all remaining entries zero. Then, $F^{uv}\succeq 0$.
 Moreover, for $i\in [t]$, define the matrix $F^{uv}_i\in \mathcal S^{nt+1}$ with $F^{uv}$ as its $(i,i)th $ diagonal block and all remaining entries equal to 0, so that 
 $F^{uv}_i\succeq 0$.
 Fix an arbitrary ordering of the vertices of $G$. 
 Define the new matrix
 \begin{equation}\label{eqYpY}
 Y= Y' + \sum_{i\in [t]} \sum_{u< v \in V(G)} Y'_{ui,vi}F^{uv}_i.\end{equation}
By construction, the sum of entries of the $(i,i)$-th diagonal block of $Y$ is equal to the sum of entries of the $(i,i)$-th diagonal block of $Y'$ and thus to 1. 
The matrix $Y$ is entrywise nonnegative and it is a sum of positive semidefinite matrices. It then follows that 
$Y$ satisfies $\ref{2-stab}$.

\smallskip
$\ref{2-stab} \Rightarrow$ $\ref{1-stab}$:
Let $Y$ be a matrix satisfying $\ref{2-stab}$. As $Y\succeq 0$, there exists   vectors  $y, y^u_i$ (for $u \in V(G), i \in [t]$)  forming a Gram representation of $Y$.
For  $i \in [t]$,  we have:
$\| y - \sum_{u \in V(G)} y_i^u \|^2 = Y_{0,0} - 2 \sum_{u \in V(G)} Y_{0,ui} + \sum_{u,v \in V(G)} Y_{ui,vi} = 0$ (using (\ref{eq:cqalpha-1}),(\ref{eq:cqalpha-sum2a}),(\ref{eq:cqalpha-sum-2})), which implies
that $y=\sum_{u\in V(G)}y^u_i$ for all $i \in [t]$. 
Define the vectors $x_u = \sum_{i\in [t]} y^u_i$ for all $u\in V(G)$ and let $X\in \mathcal S^{|V(G)|}$  denote their Gram matrix.
Then,  $X \succeq 0$, $\langle J,X \rangle = \|\sum_{u\in V(G)}\sum_{i=1}^t y^u_i\|^2= \|t y\|^2=t^2$,
and $\Tr(X)= \sum_{u \in V(G)} \|x_u\|^2 = \sum_{i,j \in [t]} \sum_{u \in V(G)} \langle y_i^u, y_j^u \rangle = \sum_{i \in [t]} \sum_{u \in V(G)} Y_{ui,ui} = t$. Moreover, $X_{uv} = \langle x_u,x_v\rangle = \sum_{i,j \in [t]} \langle y_i^u, y_j^v \rangle =\sum_{i,j \in [t]} Y_{ui,vj} \geq 0$ for any $u,v \in V(G)$, with  equality for $\{u,v\} \in E(G)$. Rescaling the matrix $X$ by $1/t$, we obtain a feasible solution for $\ref{1-stab}$.
This concludes the proof in the case $\mathcal K=\dnn$.

\medskip
We now consider the case  $\mathcal{K} = \cp$.  

$\ref{1-stab}$ $\Rightarrow$ $\ref{3-stab}$:
Let $X$ be a matrix that satisfies $\ref{1-stab}$. Applying Theorem \ref{theodeK}, 
 we obtain that $\alpha(G) \geq t$.
Let $S\subseteq V(G)$ be  a stable set of cardinality $t$. Say, $V(G)=[n]$ and $S=\{1,\ldots,t\}$. Define the vector $y\in \R^{nt+1}_{+}$ with block-form $y=(1,e_1,\ldots,e_t)$, where $e_1,\ldots,e_t$ are the first $t$ standard unit vectors in $\R^n$. Define the matrix $Y'=yy^T$ which, by construction, belongs to $\cp^{nt+1}$. It is easy to verify that $Y'$ satisfies $\ref{3-stab}$.

\smallskip
$\ref{3-stab}$ $\Rightarrow$ $\ref{2-stab}$:
We can mimic the above proof of this implication in  the case of the cone $\dnn$. The only thing to notice is that  the new matrix $Y$  in (\ref{eqYpY}) 
is  completely positive, which can proved by applying Theorem \ref{lem:2*2cp}. 
Indeed,  $Y'\in \cp$, each  term $Y'_{ui,vi}{F}^{uv}_i$ is a positive semidefinite matrix whose entries are all zero except  for a $2 \times 2$ principal submatrix,
and one gets a nonnegative matrix at each intermediate step of the summation. Hence, Theorem~\ref{lem:2*2cp} can be applied at every step and one can  conclude that $Y\in \cp$.

\smallskip
$\ref{2-stab}$ $\Rightarrow$ $\ref{1-stab}$:  
The proof is analogous to the above proof of this implication for $\dnn$.
 \end{proof}

\noindent
As an application, if   in Proposition \ref{def:cgqindep} we replace 
 the cone $\cgpsd$ by the cone $\dnn$ 
  in the definition of $\alpha_q(G)$ or of $\qindep(G)$, then we obtain the parameter $\lfloor \vartheta'(G)\rfloor$;  
  analogously, if we replace the cone $\cgpsd$ by the cone $\cp$ then we obtain  $\alpha(G)$.

\begin{corollary}\label{cor:theta'equiv}
For any graph $G$, the maximum integer $t$ for which there exists a matrix $X\in \mathcal K^{|V(G)|t+1}$ satisfying the conditions (\ref{eq:cqalpha-1}), (\ref{eq:cqalpha-sum2a}), (\ref{eq:cqalpha-sum-2}), (\ref{eq:cqalpha-orth1}) and (\ref{eq:cqalpha-orth2}) 
(or, equivalently, the conditions  (\ref{eq:cqalpha-1}), (\ref{eq:cqalpha-sum2a}), (\ref{eq:cqalpha-sum-2}) and (\ref{eq:cqalpha-orth1}))
is equal to the parameter $\lfloor \vartheta'(G)\rfloor$ when $\mathcal K=\dnn$ and it is equal to the stability number $\alpha(G)$ when $\mathcal K=\cp$.
\end{corollary}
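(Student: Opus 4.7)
The plan is a direct application of Proposition~\ref{lem:theta'equiv}, reducing the corollary to a standard identification of the parameter $\vartheta^{\mathcal K}(G)$ appearing in \eqref{eqthetas-K}. By Proposition~\ref{lem:theta'equiv}, for either $\mathcal K=\dnn$ or $\mathcal K=\cp$, the maximum integer $t$ for which a matrix $X\in \mathcal K^{|V(G)|t+1}$ satisfying (\ref{eq:cqalpha-1}), (\ref{eq:cqalpha-sum2a}), (\ref{eq:cqalpha-sum-2}), (\ref{eq:cqalpha-orth1}), (\ref{eq:cqalpha-orth2}) exists coincides with the largest integer $t$ for which there is an $X\in \mathcal K^{|V(G)|}$ with $\Tr(X)=1$, $X_{uv}=0$ for all $\{u,v\}\in E(G)$, and $\lfloor \langle J,X\rangle\rfloor=t$; the equivalence of (ii) and (iii) in that proposition also yields the same answer when (\ref{eq:cqalpha-orth2}) is dropped. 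Thus the remaining task is to compute this latter integer for each cone.

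Next I would observe that, by definition \eqref{eqthetas-K}, the supremum of $\langle J,X\rangle$ over matrices $X\in\mathcal K^{|V(G)|}$ with $\Tr(X)=1$ and $X_{uv}=0$ for all $\{u,v\}\in E(G)$ is precisely $\vartheta^{\mathcal K}(G)$. Since both $\dnn$ and $\cp$ are closed, and the feasible set is contained in the compact slice $\{X\succeq 0:\Tr(X)=1\}$, this supremum is attained. Therefore the largest integer $t$ with $\lfloor \langle J,X\rangle \rfloor = t$ achievable by a feasible $X$ equals $\lfloor \vartheta^{\mathcal K}(G)\rfloor$.

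Finally, I would invoke the identifications \eqref{eqlinktheta}, namely $\vartheta^{\dnn}(G)=\vartheta'(G)$ and $\vartheta^{\cp}(G)=\alpha(G)$ (the latter being Theorem~\ref{theodeK}). For $\mathcal K=\dnn$ this yields $\lfloor \vartheta'(G)\rfloor$, while for $\mathcal K=\cp$, since $\alpha(G)$ is already integer-valued, the floor is superfluous and one recovers $\alpha(G)$ exactly.

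There is no genuine obstacle: all the substantive work has already been carried out in Proposition~\ref{lem:theta'equiv}, and the corollary is merely its unpacking via \eqref{eqthetas-K} and \eqref{eqlinktheta}. The only routine verification is attainment of the supremum defining $\vartheta^{\mathcal K}(G)$, which is immediate from compactness of the feasible set.
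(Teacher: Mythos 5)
Your proof is correct and follows essentially the same route as the paper: apply Proposition~\ref{lem:theta'equiv} to reduce to the single-matrix formulation, identify that formulation as $\lfloor\vartheta^{\mathcal K}(G)\rfloor$, and then invoke the identifications $\vartheta^{\dnn}(G)=\vartheta'(G)$ (definition~(\ref{opt:theta-str})) and $\vartheta^{\cp}(G)=\alpha(G)$ (Theorem~\ref{theodeK}). The paper's proof is more terse (it does not spell out the compactness/attainment step, which is implicit in the observation, made inside the proof of Proposition~\ref{lem:theta'equiv}, that statement~(i) is equivalent to $\vartheta^{\mathcal K}(G)\ge t$), but the underlying reasoning is identical.
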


\begin{proof}
This follows  by applying  Proposition \ref{lem:theta'equiv} combined with the definition of $\vartheta'$ in (\ref{opt:theta-str}) when $\mathcal K=\dnn$ and with Theorem \ref{theodeK} when $\mathcal K=\cp$.
 \end{proof}

\noindent
In turn this permits to derive  the following `sandwich inequalities' for the quantum analogues of the stability number.

\begin{corollary}\label{corsandqa}
For any graph $G$, 
$
\alpha(G) \leq \alpha_q(G) \leq \qindep(G) \leq \floor{\vartheta'(G)}.
$
\end{corollary}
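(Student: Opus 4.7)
The plan is to prove the three inequalities in turn, with only the last one requiring the machinery developed in this section. The first two, $\alpha(G)\le \alpha_q(G)\le \qindep(G)$, have already been noted immediately after Definition~\ref{def:qchrom}: $\alpha_q(G)\ge \alpha(G)$ comes from restricting the variables $\rho_i^u$ in Definition~\ref{def:gqindep} to be $0/1$ valued (and $\rho=I$), while $\qindep(G)\ge \alpha_q(G)$ holds because Definition~\ref{def:qindep} drops the extra orthogonality constraint~(\ref{eq:qalpha-ort2}). So the real content lies in the bound $\qindep(G)\le \lfloor \vartheta'(G)\rfloor$.

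To prove that, I would pass through the conic reformulation just established. By Proposition~\ref{def:cgqindep}, $\qindep(G)$ equals the largest integer $t$ for which the feasibility system~(\ref{eq:cqalpha-1}), (\ref{eq:cqalpha-sum2a}), (\ref{eq:cqalpha-sum-2}), (\ref{eq:cqalpha-orth1}) admits a solution $X\in\cgpsd^{|V(G)|t+1}$. Using the inclusion $\cgpsd^{|V(G)|t+1}\subseteq \dnn^{|V(G)|t+1}$ from~(\ref{eqinclusion}), any such $X$ is automatically feasible for the same system with $\cgpsd$ replaced by $\dnn$. Therefore $\qindep(G)$ is at most the largest $t$ for which this $\dnn$-relaxation is feasible.

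Now Corollary~\ref{cor:theta'equiv} (applied with $\mathcal K=\dnn$ and using the equivalence of conditions \ref{2-stab} and \ref{3-stab} in Proposition~\ref{lem:theta'equiv}) identifies this maximum $t$ with $\lfloor \vartheta'(G)\rfloor$. Combining the two observations yields $\qindep(G)\le \lfloor \vartheta'(G)\rfloor$, which completes the chain of inequalities. There is no real obstacle here: the entire difficulty has been absorbed into Proposition~\ref{def:cgqindep} and Proposition~\ref{lem:theta'equiv}, and the corollary is essentially a one-line consequence of cone inclusion followed by a look-up of the two previously established identifications.
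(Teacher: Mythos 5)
Your proposal is correct and follows exactly the route the paper intends: the first two inequalities were recorded already in Section~2.2, and the last one is obtained by combining the conic reformulation of $\qindep(G)$ over $\cgpsd$ (Proposition~\ref{def:cgqindep}) with the inclusion $\cgpsd\subseteq\dnn$ and the identification of the resulting $\dnn$-relaxation with $\lfloor\vartheta'(G)\rfloor$ given by Corollary~\ref{cor:theta'equiv}.
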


\noindent
The  bound $\qindep(G) \leq \floor{\vartheta'(G)}$ was shown recently, with a different method, by Cubitt et al.~\cite{Cubitt:2013}.
The inequality $\alpha(G)\le \alpha_q(G)$ can be strict (see \cite{Roberson:2012}), but it is not known whether the other two inequalities can be strict.

Observe that, if one could prove that the two conditions $\ref{2-stab}$ and $\ref{3-stab}$ in Proposition~\ref{lem:theta'equiv} are  equivalent  also when setting $\mathcal{K} = \cgpsd$, then this would imply that equality $\alpha_q(G) = \qindep(G)$ holds.
This would work if we could show the analogue of Theorem~\ref{lem:2*2cp} when replacing the condition of being `completely positive' by the condition  of being `completely positive semidefinite',  since then the reasoning  used in the proof of Proposition~\ref{lem:theta'equiv} for the implication $\ref{3-stab}$ $\Rightarrow$ $\ref{2-stab}$ would extend to the case of $\cgpsd$.
However, the following example shows that Theorem \ref{lem:2*2cp} does not extend to the cone $\cgpsd$.

\begin{example}\label{ex2*2cgpsd}
Consider the matrix $L= M(\cos^2(\frac{4 \pi}{5}),\cos^2(\frac{2 \pi}{5}))$, which was presented in Section~\ref{seclink} as an example of a matrix which is completely psd but not completely positive. 
For $i\ne j\in [5]$,  let $F^{ij}\in \mathcal S^5_+$ be the matrix with all zero entries except $F^{ij}_{ii}=F^{ij}_{jj}=1$ and $F^{ij}_{ij}= F^{ij}_{ji}=-1$. Define the matrix $L' =L+\cos^2({\frac{2\pi}{5}})(F^{13}+F^{24}+F^{35}+F^{14}+F^{25})$. 
Then, $L'$ is not completely positive,  since its inner product with the Horn matrix is negative. Indeed,
$\langle H,L' \rangle = 
5(1+2\cos^2({\frac{2\pi}{5}})) -10 \cos^2({\frac{4\pi}{5}}) = 5(2-\sqrt 5)/2<0$.
As the support of $L'$ is equal to the 5-cycle, we can conclude using  Theorem~\ref{lem:oddcycle} that $L'$ is not completely positive semidefinite.

Thus, although one gets  nonnegative matrices at each step of the summation defining $L'$ starting from $L\in \cgpsd^5$,   the final matrix $L'$ does not belong to the cone $\cgpsd^5$. 
\end{example}

Finally, we relate the quantum stability number $\alpha_q(G)$  with the generalized theta number $\vartheta^{\cgpsd}(G)$, obtained when selecting the cone $\mathcal K=\cgpsd$ in the definition (\ref{eqthetas-K}).

\begin{proposition}\label{propsandwichalphaq}
For any graph $G$, we have: $\alpha_q(G) \le \floor{\vartheta^{\cgpsd}(G)} \le \floor{\vartheta^{\cl(\cgpsd)}(G)} \le \floor{\vartheta'(G)}.$
\end{proposition}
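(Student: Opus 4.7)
The proof naturally splits into two parts: (a) the two rightmost inequalities follow from cone inclusions, and (b) the leftmost inequality requires converting a feasible solution for the conic program defining $\alpha_q(G)$ in Proposition~\ref{def:cgqindep} into a feasible solution for $\vartheta^{\cgpsd}(G)$ in (\ref{eqthetas-K}).

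For part (a), I would simply observe that the program defining $\vartheta^{\mathcal K}(G)$ is a supremum whose feasible region enlarges when $\mathcal K$ does. Hence the inclusions $\cgpsd^n\subseteq \cl(\cgpsd^n)\subseteq \dnn^n$ directly yield $\vartheta^{\cgpsd}(G)\le \vartheta^{\cl(\cgpsd)}(G)\le \vartheta^{\dnn}(G)$, and $\vartheta^{\dnn}(G)=\vartheta'(G)$ by (\ref{eqlinktheta}). Taking floors preserves these inequalities.

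For part (b), let $t=\alpha_q(G)$ and let $X\in \cgpsd^{|V(G)|t+1}$ satisfy conditions (\ref{eq:cqalpha-1})--(\ref{eq:cqalpha-orth2}) provided by Proposition~\ref{def:cgqindep}. Pick positive semidefinite matrices $y,y^u_i\in \mathcal S^d_+$ (for $u\in V(G)$, $i\in [t]$) forming a Gram representation of $X$. As in the proof of the implication (ii)$\Rightarrow$(i) in Proposition~\ref{lem:theta'equiv}, conditions (\ref{eq:cqalpha-1}), (\ref{eq:cqalpha-sum2a}), (\ref{eq:cqalpha-sum-2}) imply that $\|y-\sum_{u} y^u_i\|^2=0$, so $y=\sum_{u} y^u_i$ for every $i\in [t]$. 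Now define the psd matrices $x_u:=\sum_{i\in[t]} y^u_i$ and let $\widetilde X\in \mathcal S^{|V(G)|}$ be their Gram matrix; by construction $\widetilde X\in \cgpsd^{|V(G)|}$.

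The final step is to verify that (a suitable rescaling of) $\widetilde X$ is feasible for the program defining $\vartheta^{\cgpsd}(G)$ with objective value $t$. One computes
\begin{align*}
\langle J,\widetilde X\rangle &= \Bigl\| \sum_{u\in V(G)} x_u \Bigr\|^2 = \|ty\|^2 = t^2,\\
\Tr(\widetilde X) &= \sum_{u\in V(G)}\|x_u\|^2 = \sum_{u,i,j} \langle y^u_i,y^u_j\rangle = \sum_{i,u} X_{ui,ui} = t,
\end{align*}
where the reduction to diagonal terms uses (\ref{eq:cqalpha-orth1}) (taking $v=u$, which is allowed since $u\simeq u$) to kill the $i\ne j$ cross-terms, and the final equality combines (\ref{eq:cqalpha-sum-2}) with (\ref{eq:cqalpha-orth2}). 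Similarly, for $\{u,v\}\in E(G)$ one has $u\simeq v$, so $\widetilde X_{uv}=\sum_{i,j} X_{ui,vj}=0$ by (\ref{eq:cqalpha-orth1}) together with (\ref{eq:cqalpha-orth2}). Rescaling by $1/t$ yields a feasible point for $\vartheta^{\cgpsd}(G)$ with value $t$, so $\vartheta^{\cgpsd}(G)\ge t=\alpha_q(G)$, and since $\alpha_q(G)$ is an integer we conclude $\alpha_q(G)\le \floor{\vartheta^{\cgpsd}(G)}$.

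The only mildly delicate point is bookkeeping the orthogonality conditions: one must remember that $u\simeq v$ in (\ref{eq:cqalpha-orth1}) includes the case $u=v$, which is what provides the $i\ne j$ orthogonality needed to diagonalize $\Tr(\widetilde X)$. The rest is a direct transcription of the argument from Proposition~\ref{lem:theta'equiv}, now carried out inside $\cgpsd$ rather than $\cp$ or $\dnn$, and no analogue of Theorem~\ref{lem:2*2cp} is required here since we only need the aggregation direction.
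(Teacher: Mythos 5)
Your proof is correct and follows essentially the same route as the paper: part (a) is identical (cone inclusions plus $\vartheta^{\dnn}=\vartheta'$), and part (b) reproduces, with the details written out, the observation that the implication (ii)$\Rightarrow$(i) of Proposition~\ref{lem:theta'equiv} carries over verbatim to $\K=\cgpsd$ because the aggregation $x_u=\sum_i y^u_i$ works equally well when the Gram vectors are replaced by psd matrices. Your bookkeeping of the orthogonality conditions (using that $u\simeq u$ so that (O1) kills the $i\ne j$ cross terms in $\Tr(\widetilde X)$, and that (C2b) combined with (O2) gives $\sum_u X_{ui,ui}=1$) is exactly what justifies the paper's terser remark, so no gap.
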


\begin{proof}
The equality  $\vartheta^{\dnn}(G) = \vartheta'(G)$ (from (\ref{eqlinktheta})) and the inclusions $\cgpsd \subseteq \cl(\cgpsd)\subseteq \dnn$ give  $\vartheta^{\cgpsd}(G) \le \vartheta^{\cl(\cgpsd)}(G) \le \vartheta'(G)$ and thus the two right most equalities.

We now show the inequality $\alpha_q(G)\le \floor{\vartheta^{\cgpsd}(G)}$.
For this, we revisit the proof of Proposition~\ref{lem:theta'equiv}.
First we observe that  the implication $\ref{2-stab}\Rightarrow \ref{1-stab}$  remains true in 
 Proposition \ref{lem:theta'equiv} if we  select the cone $\K=\cgpsd$. (Indeed, the same proof applies as in the case $\K=\dnn$, except that   $y,y^u_i$ are now psd matrices.)
By definition,  $\alpha_q(G)$ is the largest integer $t$ for which   Proposition~\ref{lem:theta'equiv} $\ref{2-stab}$ holds with $\K=\cgpsd$.
In turn, by the above,  this largest number  is at most the largest integer $t$ for which 
Proposition~\ref{lem:theta'equiv} $\ref{1-stab}$ holds with $\K=\cgpsd$, the latter being equal to $\floor{\vartheta^{\cgpsd}(G)}$. Thus 
$\alpha_q(G)\le \floor{\vartheta^{\cgpsd}(G)}$ holds.
\end{proof}

\noindent
We do not know whether $\vartheta^{\cgpsd}(G)$ also provides an upper bound for $\qindep(G)$, since
we cannot show that Proposition \ref{lem:theta'equiv} $\ref{3-stab}$ implies Proposition \ref{lem:theta'equiv} $\ref{1-stab}$ in the case $\mathcal K = \cgpsd$. The proof used for the case $\mathcal K = \dnn$ and $\cp$ indeed 
does not extend to the case $\K=\cgpsd$ since  Theorem \ref{lem:2*2cp} does not hold if we consider matrices in 
$\cgpsd$ (as shown in Example \ref{ex2*2cgpsd}).

\subsection{Conic reformulation for quantum chromatic numbers}\label{secconechi}

Analogously to what we did  for the quantum stability numbers, we can reformulate the two quantum variants $\chi_q(G)$  and $\qchrom(G)$ of the chromatic number as conic feasibility programs over the cone $\cgpsd$. The proof is omitted since it is easy and along the same lines as  for Proposition~\ref{def:cgqindep}.

\begin{proposition}
\label{def:cgqchrom}
For a graph $G$, $\chi_q(G)$ is equal to  the minimum integer $t$ for which there  exists a matrix $X \in \cgpsd^{|V(G)|t+1}$ satisfying   the following conditions:
\begin{align}
&X_{0,0} = 1, \label{eq:cqchrom-1} \tag{C1}\\
& \sum_{i \in [t]} X_{0,ui}= 1\quad \forall u\in V(G), \label{eq:cqchrom-sum-a} \tag{C3a}\\
&\sum_{i,j \in [t]} X_{ui,uj} =1 \quad \forall u\in V(G) \label{eq:cqchrom-sum3} \tag{C3b}.\\
&X_{ui,vi} = 0 \quad \forall i \in [t], \, \forall \{u,v\} \in E(G), \label{eq:cqchrom-orth1} \tag{O3}\\
&X_{ui,uj} = 0 \quad \forall i\neq j \in [t], \, \forall u \in V(G)\label{eq:cqchrom-orth2} \tag{O4}.
\end{align}
Moreover, the parameter $\qchrom(G)$ is equal to the minimum integer $t$ for which there  exists a matrix $X \in \cgpsd^{|V(G)|t+1}$ satisfying   
(\ref{eq:cqchrom-1}), (\ref{eq:cqchrom-sum-a}),  (\ref{eq:cqchrom-sum3}) and (\ref{eq:cqchrom-orth1}).
\end{proposition}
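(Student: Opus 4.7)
The plan is to mirror the argument used for Proposition \ref{def:cgqindep}, with the roles of conditions interchanged so that the ``sum'' is now over colors (for a fixed vertex) rather than over vertices (for a fixed color). Concretely, I will establish two implications between Definition \ref{def:gqchrom} and the conic feasibility program for $\chi_q(G)$ stated in the proposition, and then note that the exact same argument works for $\qchrom(G)$ upon dropping condition \eqref{eq:cqchrom-orth2}.

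For the forward direction, I would start with psd matrices $\rho, \rho^i_u \in \mathcal S^d_+$ satisfying \eqref{eq:qchrom-1}--\eqref{eq:qchrom-ort2}, and let $X$ be their Gram matrix (indexed by $\{0\} \cup V(G)\times[t]$, with index $0$ corresponding to $\rho$). By construction $X \in \cgpsd^{|V(G)|t+1}$. Then \eqref{eq:cqchrom-1}, \eqref{eq:cqchrom-orth1}, \eqref{eq:cqchrom-orth2} translate verbatim from \eqref{eq:qchrom-1}, \eqref{eq:qchrom-ort1}, \eqref{eq:qchrom-ort2}. Condition \eqref{eq:cqchrom-sum-a} follows for each $u \in V(G)$ from
$\sum_{i\in[t]} X_{0,ui} = \langle \rho, \sum_{i\in[t]}\rho^i_u\rangle = \langle \rho,\rho\rangle = 1$
using \eqref{eq:qchrom-sum} and \eqref{eq:qchrom-1}, and \eqref{eq:cqchrom-sum3} follows similarly from
$\sum_{i,j\in[t]} X_{ui,uj} = \langle \sum_i\rho^i_u, \sum_j \rho^j_u\rangle = \langle \rho,\rho\rangle = 1.$

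For the reverse direction, assume $X\in \cgpsd^{|V(G)|t+1}$ satisfies \eqref{eq:cqchrom-1}--\eqref{eq:cqchrom-orth2}, and pick any Gram representation of $X$ by psd matrices $\rho, \rho^i_u \in \mathcal S^d_+$. Conditions \eqref{eq:qchrom-1}, \eqref{eq:qchrom-ort1}, \eqref{eq:qchrom-ort2} are immediate. The only nontrivial point is to recover the equality $\sum_{i\in[t]}\rho^i_u=\rho$ in \eqref{eq:qchrom-sum}, which I would do by the usual squared-norm trick:
$\bigl\lVert \rho - \sum_{i\in[t]} \rho^i_u \bigr\rVert^2 = X_{0,0} - 2\sum_{i\in[t]} X_{0,ui} + \sum_{i,j\in[t]} X_{ui,uj} = 1 - 2 + 1 = 0$
by \eqref{eq:cqchrom-1}, \eqref{eq:cqchrom-sum-a}, \eqref{eq:cqchrom-sum3}, so that $\rho = \sum_{i\in[t]}\rho^i_u$. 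This establishes the equivalence, hence the characterization of $\chi_q(G)$.

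The case of $\qchrom(G)$ is handled by the exact same two constructions, simply omitting all references to \eqref{eq:qchrom-ort2} and \eqref{eq:cqchrom-orth2}; no new ideas are needed. There is no real obstacle here: the whole content of the argument is the Gram/rescaling dictionary between psd-matrix tuples and their Gram matrix in $\cgpsd$, together with the observation that the equality $\rho=\sum_i \rho^i_u$ is the only nonlinear condition in Definition \ref{def:gqchrom} and it is exactly captured by the two linear trace-type conditions \eqref{eq:cqchrom-sum-a} and \eqref{eq:cqchrom-sum3} via the squared-norm computation above.
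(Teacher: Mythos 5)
Your proof is correct and is exactly the argument the paper intends: the paper omits the proof of Proposition~\ref{def:cgqchrom} with the remark that it is ``along the same lines as for Proposition~\ref{def:cgqindep},'' and your write-up carries out that analogy faithfully, including the squared-norm computation that recovers $\rho=\sum_i\rho^i_u$ from (\ref{eq:cqchrom-1}), (\ref{eq:cqchrom-sum-a}), (\ref{eq:cqchrom-sum3}).
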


We now show an analogue of Proposition~\ref{lem:theta'equiv} for the chromatic parameters.
However, as pointed out in Remark \ref{remqcp} below, when chosing the cone $\K=\cp$, the statements $\ref{1-chrom}$ and $\ref{2-chrom}$ in Proposition \ref{lem:equithetaplus} are not equivalent.

\begin{proposition}\label{lem:equithetaplus}
Let $G$ be a graph,  $t\ge 1$ be an integer, and let $\mathcal{K}$ denote the cone  $\dnn$ or $\cp$. 
Consider the following three assertions.
\begin{itemize}
\item[\namedlabel{1-chrom}{(i)}] There exists a matrix $X \in \mathcal{K}^{|V(G)|}$ such that $\ceil{X_{uu}} = t$ for every $u \in V(G)$, $X_{uv} = 0$ for all $\{u,v\} \in E(G)$ and $X - J \succeq 0$.
\item[\namedlabel{2-chrom}{(ii)}] There exists a matrix $X \in \mathcal{K}^{|V(G)|t+1}$ satisfying the conditions (\ref{eq:cqchrom-1}), (\ref{eq:cqchrom-sum-a}), (\ref{eq:cqchrom-sum3}), (\ref{eq:cqchrom-orth1}) and (\ref{eq:cqchrom-orth2}).
\item[\namedlabel{3-chrom}{(iii)}] There exists a matrix $X \in \mathcal{K}^{|V(G)|t+1}$ satisfying the conditions (\ref{eq:cqchrom-1}), (\ref{eq:cqchrom-sum-a}), (\ref{eq:cqchrom-sum3}) and (\ref{eq:cqchrom-orth1}).
\end{itemize}
Then, $\ref{1-chrom}$ $\Longleftrightarrow$ $\ref{2-chrom}$ $\Longleftrightarrow$ $\ref{3-chrom}$ if $\mathcal K=\dnn$, and $\ref{3-chrom}$ $\Longleftrightarrow$ $\ref{2-chrom}$ $\Longrightarrow $ $\ref{1-chrom}$ if $\mathcal K=\cp$.
 \end{proposition}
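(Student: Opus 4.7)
The plan is to prove the cycle of implications (ii) $\Rightarrow$ (iii) $\Rightarrow$ (ii) $\Rightarrow$ (i) for both choices of $\K$, and additionally (i) $\Rightarrow$ (ii) when $\K = \dnn$; together these yield the claimed equivalences. The implication (ii) $\Rightarrow$ (iii) is trivial since (iii) just drops the orthogonality condition (\ref{eq:cqchrom-orth2}).

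For (iii) $\Rightarrow$ (ii), the idea is to mimic the off-diagonal-to-diagonal transfer analogous to that in the proof of Proposition~\ref{lem:theta'equiv}. Given $Y'$ satisfying~(iii), for each $u\in V(G)$ and $i<j\in[t]$ let $F^u_{ij}\in\cpsd^{|V(G)|t+1}$ be the matrix whose only nonzero entries are $+1$ at $(ui,ui),(uj,uj)$ and $-1$ at $(ui,uj),(uj,ui)$ (so $F^u_{ij}\succeq 0$), and set
\[
Y \;=\; Y' \;+\; \sum_{u\in V(G)}\sum_{i<j\in[t]} Y'_{ui,uj}\, F^u_{ij}.
\]
This zeroes out exactly the entries $Y_{ui,uj}$ with $i\ne j$, giving (\ref{eq:cqchrom-orth2}), while (\ref{eq:cqchrom-1}), (\ref{eq:cqchrom-sum-a}), (\ref{eq:cqchrom-sum3}), (\ref{eq:cqchrom-orth1}) are preserved because the corrections touch only entries indexed by $(ui,uj)$ with the same vertex $u$. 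When $\K=\dnn$, the matrix $Y$ is PSD (as a sum of PSD matrices) and entrywise nonnegative. When $\K=\cp$, I would add the corrections one at a time and apply Theorem~\ref{lem:2*2cp}: each summand is PSD with only a $2\times 2$ nonzero principal submatrix, and each partial sum is entrywise nonnegative, so complete positivity is preserved at every step.

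For (ii) $\Rightarrow$ (i), let $\{y,y^i_u\}$ be a Gram representation of $Y$ (by nonnegative vectors when $\K=\cp$). Combining (\ref{eq:cqchrom-1}), (\ref{eq:cqchrom-sum-a}), (\ref{eq:cqchrom-sum3}) with (\ref{eq:cqchrom-orth2}) gives $\|y-\sum_i y^i_u\|^2 = 0$, hence $y=\sum_i y^i_u$ for every $u$. Setting $X'_{uv}=t\sum_i\langle y^i_u,y^i_v\rangle=t\sum_i Y_{ui,vi}$, we obtain a matrix in $\K^{|V(G)|}$ (as the Gram matrix of the stacked vectors $\sqrt t\,(y^1_u,\dots,y^t_u)$, which are nonnegative when $\K=\cp$), with $X'_{uu}=t$ from (\ref{eq:cqchrom-orth2})+(\ref{eq:cqchrom-sum3}) and $X'_{uv}=0$ on edges of $G$ from (\ref{eq:cqchrom-orth1}). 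Finally, using $y=\sum_i y^i_u$ I rewrite
\[
X'_{uv}-1 \;=\; \sum_{i,j\in[t]}(tI_t-J_t)_{ij}\,\langle y^i_u,y^j_v\rangle,
\]
which is PSD since $tI_t-J_t\succeq 0$ admits a Gram factorization; this gives $\ceil{X'_{uu}}=t$, $X'_{uv}=0$ on edges, and $X'-J\succeq 0$, which is~(i).

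The hard part, and the main obstacle, is the implication (i) $\Rightarrow$ (ii) when $\K=\dnn$: it requires an explicit Gram-vector lift. First, replace $X$ by $X+\mathrm{diag}(t-X_{uu})_{u\in V(G)}$ to assume $X_{uu}=t$ for all $u$ while preserving all conditions of~(i); together with $X\in\dnn$ this yields the Cauchy--Schwarz bound $X_{uv}\le t$. Let $z_u\in\R^d$ be a Gram representation of $X-J$, so $\|z_u\|^2=t-1$ and $\langle z_u,z_v\rangle=-1$ on edges. Let $f_i=e_i-\tfrac{1}{t}e\in\R^t$ be the simplex vectors, which satisfy $\|f_i\|^2=(t-1)/t$, $\langle f_i,f_j\rangle=-1/t$ for $i\ne j$, and $\sum_i f_i=0$. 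Choose a unit vector $y^0$ in a direction orthogonal to all the tensors $f_i\otimes z_u$, and define
\[
y^i_u \;=\; \tfrac{1}{t}\,y^0 \;+\; \tfrac{1}{t}\sqrt{\tfrac{t}{t-1}}\;f_i\otimes z_u.
\]
Short calculations using the simplex identities verify that the Gram matrix $Y$ of $\{y^0,\,y^i_u\}$ satisfies (\ref{eq:cqchrom-1}), (\ref{eq:cqchrom-sum-a}), (\ref{eq:cqchrom-sum3}), (\ref{eq:cqchrom-orth2}), and (\ref{eq:cqchrom-orth1}). The delicate point is entrywise nonnegativity: the tightest inner product, for $i\ne j$ and $u\ne v$, evaluates to $(t-X_{uv})/(t^2(t-1))$, which is $\geq 0$ precisely by the bound $X_{uv}\leq t$ from Cauchy--Schwarz. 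The main difficulty is finding this lift in the first place: a naive stacking of Gram vectors of $X$ or of $X-J$ cannot simultaneously enforce (\ref{eq:cqchrom-orth2}) and make $\sum_i y^i_u$ equal the same vector across all $u$; the simplex construction is exactly what reconciles these two constraints.
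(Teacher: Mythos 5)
Your proposal is correct and reaches the same conclusion, but the technique is genuinely different from the paper's. The paper proves (i) $\Rightarrow$ (iii) $\Rightarrow$ (ii) $\Rightarrow$ (i) for $\dnn$ using symmetrization into the block form (\ref{eq:block}), Schur complements, and Lemma \ref{lem:block}; the matrix it builds for (i) $\Rightarrow$ (iii) is defined abstractly through its blocks $A = (X'+J)/t^2$ and $B = J/t^2 - X'/(t^2(t-1))$, after which a separate pass (iii) $\Rightarrow$ (ii) kills the off-diagonal entries $Y_{ui,uj}$. Your simplex-tensor construction $y^i_u = \frac{1}{t}y^0 + \frac{1}{t}\sqrt{t/(t-1)}\,f_i\otimes z_u$ produces \emph{exactly the same matrix} $Y'$ (one can check $Y'_{ui,vj} = \frac{1}{t^2} + \frac{1}{t(t-1)}\langle f_i,f_j\rangle\langle z_u,z_v\rangle$ reduces to the paper's $A$ and $B$), but you observe directly that it already satisfies (O4) — since $Y'_{ui,uj}= \frac{1}{t^2} - \frac{t-1}{t^2(t-1)} = 0$ when $X_{uu}=t$ — so you can go straight to (ii) rather than through (iii). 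This shortcut is real but harmless for the logic, since (iii) $\Rightarrow$ (ii) is still needed for the chain. Your (ii) $\Rightarrow$ (i) via $X'_{uv}-1 = \sum_{i,j}(tI_t - J_t)_{ij}\langle y^i_u, y^j_v\rangle$ is also valid and replaces the paper's Schur-complement/block-lemma machinery with a direct Gram factorization of $tI_t - J_t$; a nice feature is that it does not even require symmetrizing $Y$ first. The only genuine (but small) gap is the degenerate case $t=1$: your formula involves $\sqrt{t/(t-1)}$, which is undefined there. The paper handles $t=1$ separately (when $X_{uu}=1$ and $X-J\succeq 0$ force $X=J$, hence $G$ edgeless, and one takes the all-ones matrix); you should insert the same two-line remark before your construction. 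Apart from that, the argument is sound and the $\cp$ case (via Theorem \ref{lem:2*2cp} applied term by term) matches the paper.
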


\begin{proof}
Notice that statement $\ref{1-chrom}$ is equivalent to saying that $\Theta^{\mathcal K}(G) \le t$ holds.

Assume first  $\mathcal{K} = \dnn$. 
We show: ${ \ref{1-chrom}} \Rightarrow {\ref{3-chrom}} \Rightarrow { \ref{2-chrom}} \Rightarrow { \ref{1-chrom}}$.

${ \ref{1-chrom}} \Rightarrow { \ref{3-chrom}}$:
Let $X$ be a matrix that satisfies the conditions of $\ref{1-chrom}$.
By adding a nonnegative diagonal matrix to $X$ we can assume  that $X_{uu} = t$ for all $u \in V(G)$.
Set $X' = X-J\in \mathcal S^{|V(G)|}$.
Then, $X'\succeq 0$, $X'_{uu}=t-1$ for all $u\in V(G)$ and, for $u\ne v$,  $X'_{uv}= X_{uv}-1\ge -1$ with equality when $\{u,v\}\in E(G)$.
Moreover, $X'_{uv}\ge -(t-1)$ since $X'\succeq 0$ with diagonal entries equal to $t-1$. 

In the case  $t=1$, we have $X'=0$, hence $G$ is the empty graph and the all-ones matrix satisfies $\ref{3-chrom}$.
We now assume $t\ge 2$.  We define the matrices
$\tilde{A} =\frac{1}{t^2} X'$, $\tilde{B} = - \frac{1}{t^2(t-1)}X'$, $A = \tilde{A} + \frac{1}{t^2} J$ and $B = \tilde{B} + \frac{1}{t^2} J \in \mathcal S^{|V(G)|}$. We let $Y\in \mathcal S^{|V(G)|t}$ be the block-matrix as in (\ref{eqblock}) with $A$ as diagonal blocks and $B$ as off-diagonal blocks and $Y' =\left(\begin{matrix} 1 & \frac{1}{t} e^T\cr \frac{1}{t} e & Y\end{matrix}\right)$. 
We now show that $Y'\in \mathcal S^{|V(G)|t+1}$ satisfies $\ref{3-chrom}$.

By construction, (\ref{eq:cqchrom-1}), (\ref{eq:cqchrom-sum-a}) and  (\ref{eq:cqchrom-orth1}) hold.
(\ref{eq:cqchrom-sum3}) follows from the simple observation that $tA+t(t-1)B=J$ and thus $\sum_{i,j \in [t]} Y'_{ui,uj} = tA_{uu} + t(t-1)B_{uu} = 1$ for every $u \in V(G)$.
At last we argue that $Y' \in \dnn$.
Notice that $A,B\ge 0$ and thus $Y,Y'\ge 0$. 
Moreover, doing the Schur complement of $Y$ in $Y'$ w.r.t. its $(0,0)$-th entry (recall (\ref{eqSchur})), we obtain that $Y' \succeq 0$ if and only if $\tilde{Y} = Y - {\frac{1}{t^2}}J \succeq 0$. 
Now, $\tilde{Y}$ has the block structure of (\ref{eqblock}) with $\tilde{A}$ and $\tilde{B}$ as diagonal and off-diagonal blocks, respectively.
Moreover, $\tilde{A}+(t-1)\tilde{B}=0$ and $\tilde{A}-\tilde{B}= {\frac{1}{t(t-1)}}X'\succeq 0$ and thus, by Lemma \ref{lem:block}, we deduce that $\tilde Y\succeq 0$ and therefore $Y' \succeq 0$.

\smallskip
$ \ref{3-chrom} \Rightarrow \ref{2-chrom}$: 
Let $Y'$ be a feasible matrix for $\ref{3-chrom}$, we construct a new matrix $Y$ satisfying  $\ref{2-chrom}$.
As in the proof of the implication $\ref{3-chrom}\Rightarrow \ref{2-chrom}$ in Proposition~\ref{lem:theta'equiv}, it suffices to modify each $(u,u)$-th diagonal block of $Y'$ in such a way that all its off-diagonal entries become zero.
For this, for $i\ne j\in [t]$, consider the matrix $F^{ij}\in \mathcal S^t$ with entries $F^{ij}({ij})=F^{ij}({ji})=-1$, $F^{ij}({ii})=F^{ij}({jj})=1$, and all remaining entries equal to 0.
Moreover, for $u\in V(G)$, define the matrix $F^{ij}_u\in \mathcal S^{nt+1}$ with $F^{ij}$ as its $(u,u)$-th diagonal block and all remaining entries equal to 0, so that
 $F^{ij}_u\succeq 0$.
The new matrix 
\begin{equation}\label{eqYpYb}
Y=Y' + \sum_{u\in V(G)} \sum_{1\le i<j\le t} Y'_{ui,uj} F^{ij}_u,
\end{equation}
is entrywise nonnegative, positive semidefinite and satisfies all the conditions in  $ \ref{2-chrom}$.

\smallskip
${\ref{2-chrom}} \Rightarrow \ref{1-chrom}$:
Let $Y\in \dnn$ satisfy $\ref{2-chrom}$. 
Without loss of generality, we can assume that 
$Y$ has the block-form (\ref{eq:block}). Then, $\alpha=Y_{00}=1$ by (\ref{eq:cqchrom-1}),
$a={\frac{1}{t}}e$ by (\ref{eq:cqchrom-sum-a}),
$A_{uu}={\frac{1}{t}}$ for all $u\in V(G)$ by (\ref{eq:cqchrom-sum3}) together with (\ref{eq:cqchrom-orth2}),
$A_{uv}=0$ for $\{u,v\}\in E(G)$ by (\ref{eq:cqchrom-orth1}), and
$B_{uu}=0$ for $u\in V(G)$ by (\ref{eq:cqchrom-orth2}).
Let $Z\in \mathcal S^{|V(G)|t}$ denote the principal submatrix of $Y$ obtained by deleting its first row and column indexed by the index $0$, so that $Z$ has the block-form (\ref{eqblock}). Let $Z'$ denote the Schur complement of $Z$ in $Y$ w.r.t. its $(0,0)$-th entry (recall (\ref{eqSchur})). 
Using the fact that  $a=e/t$, we obtain that 
$Z'=Z-{\frac{1}{t^2}}J $. Moreover, $Y\succeq 0$ implies $Z'\succeq 0$. Now, $Z'$ has again the block-form (\ref{eqblock}) with diagonal blocks $A'=A-{\frac{1}{t^2}}J$ and with off-diagonal blocks $B'=B-{\frac{1}{t^2}}J$. Applying Lemma \ref{lem:block}, we deduce that $A'-B'\succeq 0$ and $A'+(t-1)B'\succeq 0$, which implies $A-B\succeq 0$ and  
$A+(t-1)B-{\frac{1}{t}}J\succeq 0$.
Now observe that $\Tr(A+(t-1)B-{\frac{1}{t}}J) = \Tr(A-{\frac{1}{t}}J)=0$ and that this implies $A+(t-1)B-{\frac{1}{t}}J=0$ as $A+(t-1)B-{\frac{1}{t}}J \succeq 0$. 

We can now construct a matrix $X\in \mathcal S^{|V(G)|}$ satisfying $\ref{1-chrom}$.
Namely, set $X=t^2A$. Thus, $X\in \dnn$, $X_{uu}=t$ for $u\in V(G)$, and $X_{uv}=0$ for $\{u,v\}\in E(G)$.
Moreover, $X-J\succeq 0$, $X-J = t^2A-J = t(t-1)(A-B)$ which follows
 from $A-B\succeq 0$ and  the identity $A+(t-1)B={\frac{1}{t}}J$.
This concludes the proof in the case $\mathcal K=\dnn$.

\medskip
We now consider the case  $\mathcal{K} = \cp$. 
The implication $\ref{2-chrom}\Rightarrow \ref{3-chrom}$ is clear. 

\smallskip
$ \ref{3-chrom} \Rightarrow \ref{2-chrom}$:
We can mimic the above proof of this implication in  the case of the $\dnn$ cone.
We only need to observe that the new matrix $Y$ in (\ref{eqYpYb})
is  completely positive. This is the case because
Theorem~\ref{lem:2*2cp} can be applied at every step of the summation, since one gets a nonnegative matrix at each step.

\smallskip
$\ref{2-chrom}$ $\Rightarrow$ $\ref{1-chrom}$: Again we can mimic   the above proof of this implication in the case of $\dnn$. Indeed,  we can assume that there exists a matrix $Y\in \cp^{|V(G)|t+1}$ satisfying $\ref{2-chrom}$ and with block-form (\ref{eq:block}), where $A,B$ satisfy the identity:
$A+(t-1)B={\frac{1}{t}}J$. 
Then, the matrix $X=t^2A$ belongs to $\cp^{|V(G)|}$ and satisfies $\ref{1-chrom}$.
 \end{proof}

\begin{corollary}\label{corqchi}
For any graph $G$, the minimum integer $t$ for which there exists a matrix  $X\in \mathcal K^{|V(G)|t+1}$ satisfying the conditions 
(\ref{eq:cqchrom-1}), (\ref{eq:cqchrom-sum-a}), (\ref{eq:cqchrom-sum3}), (\ref{eq:cqchrom-orth1}) and (\ref{eq:cqchrom-orth2}) (or, equivalently,  the conditions
(\ref{eq:cqchrom-1}),(\ref{eq:cqchrom-sum-a}),(\ref{eq:cqchrom-sum3}) and (\ref{eq:cqchrom-orth1})) is equal to the parameter $\lceil \vartheta^+(\overline G)\rceil$ when $\mathcal K=\dnn$ and it is equal to the chromatic number $\chi(G)$ when $\mathcal K=\cp$.
\end{corollary}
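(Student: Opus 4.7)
My plan is to treat the two choices of $\mathcal K$ separately, combining the size reduction from Proposition~\ref{lem:equithetaplus} with an identification of the resulting feasibility parameter.

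For $\mathcal K=\dnn$, Proposition~\ref{lem:equithetaplus} gives the full equivalence $\ref{1-chrom}\Leftrightarrow \ref{2-chrom}\Leftrightarrow \ref{3-chrom}$, so the minimum integer $t$ for which $\ref{2-chrom}$ (or $\ref{3-chrom}$) is feasible coincides with the minimum integer $t$ for which $\ref{1-chrom}$ is feasible. I would then observe that any $X$ feasible for $\ref{1-chrom}$ with $\lceil X_{uu}\rceil = t$ can be replaced by $X+D$ with $D$ a nonnegative diagonal matrix bringing each diagonal entry up to exactly $t$; this preserves $\dnn$-membership, the condition $X-J\succeq 0$, and the vanishing of edge entries. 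Hence $\ref{1-chrom}$ is feasible at integer $t$ iff $\Theta^{\dnn}(G)\le t$, and by the identity $\Theta^{\dnn}(G)=\vartheta^+(\overline G)$ from (\ref{eqlinktheta}) the minimum such integer is exactly $\lceil \vartheta^+(\overline G)\rceil$.

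For $\mathcal K=\cp$, Proposition~\ref{lem:equithetaplus} only provides $\ref{3-chrom}\Leftrightarrow \ref{2-chrom}\Rightarrow \ref{1-chrom}$, so shortcutting through $\ref{1-chrom}$ would only yield a bound via $\chi_f(G)$ by Theorem~\ref{theoDR}. Instead I would argue both directions directly. For the upper bound $\min t \le \chi(G)$, any proper $t$-coloring $c:V(G)\to[t]$ yields the rank-one matrix $X=yy^T\in\cp$ with $y\in\R_+^{|V(G)|t+1}$ defined by $y_0=1$ and $y_{ui}=[c(u)=i]$; a direct inspection verifies (\ref{eq:cqchrom-1})--(\ref{eq:cqchrom-orth2}). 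For the lower bound $\chi(G)\le\min t$, take any $X\in \cp^{|V(G)|t+1}$ satisfying the weaker set $\ref{3-chrom}$ and fix a nonnegative Gram representation $y, y_u^i \in \R_+^d$. Conditions (\ref{eq:cqchrom-1}), (\ref{eq:cqchrom-sum-a}), (\ref{eq:cqchrom-sum3}) expand to $\|y-\sum_i y_u^i\|^2 = 1-2+1 = 0$, forcing $\sum_i y_u^i = y$ for every $u\in V(G)$. Pick any coordinate $h$ with $y(h)>0$ (available since $\|y\|^2=1$) and for each $u$ select any color $c(u)$ with $y_u^{c(u)}(h)>0$ (available since $\sum_i y_u^i(h)=y(h)>0$). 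Condition (\ref{eq:cqchrom-orth1}) combined with coordinatewise nonnegativity gives $y_u^i(h) y_v^i(h)=0$ for every edge $\{u,v\}$ and every $i$, so $c(u)\ne c(v)$ on edges; hence $c$ is a proper $t$-coloring and $\chi(G)\le t$.

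The main obstacle is the $\cp$ case: one cannot simply recycle the $\dnn$ argument via $\ref{1-chrom}$, since Proposition~\ref{lem:equithetaplus} is only unidirectional there and the corresponding parameter $\Theta^{\cp}(G)=\chi_f(G)$ is strictly weaker than $\chi(G)$. The key observation unlocking the $\cp$ lower bound is that a completely positive factorization carries genuinely nonnegative Gram factors, so the edge orthogonality $\langle y_u^i, y_v^i\rangle = 0$ degenerates at every single coordinate into the pointwise incompatibility $y_u^i(h) y_v^i(h)=0$; evaluating at one coordinate in the support of $y$ then converts the conic certificate into a deterministic proper $t$-coloring.
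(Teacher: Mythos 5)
Your proof is correct. The $\mathcal{K}=\dnn$ part is essentially the paper's argument: invoke the equivalences from Proposition~\ref{lem:equithetaplus}, then observe that (i) at integer $t$ is feasible exactly when $\Theta^{\dnn}(G)\le t$, and conclude via $\Theta^{\dnn}(G)=\vartheta^+(\overline G)$.

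For $\mathcal{K}=\cp$, your route is genuinely different from the paper's and, I would argue, more elementary. The paper proves $\chi(G)\le t$ by first extracting from the symmetrized block-form of a feasible $Y$ the identities $A+(t-1)B=\tfrac{1}{t}J$ and $\Tr(A)=|V(G)|/t$, then showing that the (rescaled) principal submatrix $Z'$ is feasible for the de Klerk--Pasechnik program (Theorem~\ref{theodeK}) on the product graph $G\Box K_t$, deducing $\alpha(G\Box K_t)=|V(G)|$, and finally applying Chv\'atal's reduction (Theorem~\ref{theoChvatal}). You instead take a nonnegative Gram representation $y,y_u^i\in\R_+^d$, use the conditions (C1), (C3a), (C3b) to force $\sum_i y_u^i=y$ for every $u$, pick a single coordinate $h$ in the support of $y$, and let $c(u)$ be any color with $y_u^{c(u)}(h)>0$; entrywise nonnegativity converts the edge orthogonality (O3) into the coordinatewise incompatibility $y_u^i(h)y_v^i(h)=0$, so $c$ is a proper $t$-coloring. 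This ``deterministic rounding at a single coordinate'' is shorter, avoids both auxiliary theorems, and makes transparent exactly where complete positivity (as opposed to mere doubly nonnegativity) is used. Your upper bound $\min t\le\chi(G)$ via the rank-one matrix $X=yy^T$ with $y_{ui}=[c(u)=i]$ is likewise a slight simplification of the paper's construction, which averages over the $s$ indicator vectors to produce a symmetrized solution; a single rank-one certificate already does the job. Both approaches are sound; the paper's version has the merit of running parallel to its treatment of $\alpha_q$ and of exhibiting the $G\Box K_t$ connection explicitly, whereas yours is more self-contained.
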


\begin{proof}
In the case $\mathcal K=\dnn$, the result follows using Proposition \ref{lem:equithetaplus} combined with the definition of $\vartheta^+(G)$ from (\ref{opt:theta-str}).

Consider now the case $\mathcal K=\cp$. In view of Proposition \ref{lem:equithetaplus}, we know that the two conditions $\ref{2-chrom}$ and $\ref{3-chrom}$ are equivalent. Let $t$ denote the minimum integer for which the condition $\ref{2-chrom}$ of Proposition \ref{lem:equithetaplus} holds; we show that $\chi(G)=t$.
First, we show that $\chi(G)\le t$. For this, consider a matrix $Y\in \cp^{|V(G)|t+1}$ satisfying $\ref{2-chrom}$ which has block-form (\ref{eq:block}) and let $Z$ be its principal submatrix obtained by deleting its row and column indexed by $0$. Then, $Z\in \cp^{|V(G)|t}$ 
and let $A$ and $B$ denote its diagonal and off-diagonal blocks, respectively. 
As in the proof of implication $\ref{2-chrom}$ $\Rightarrow$ $\ref{1-chrom}$ in Proposition \ref{lem:equithetaplus}, we can deduce that $A-B \succeq 0$, $A + (t-1)B = \frac{1}{t}J $  and that $\Tr(A) = |V(G)|/t$.
This then implies that $\Tr(Z) =|V(G)|$ and $\langle J,Z\rangle =|V(G)|^2$.
Now we use the result of Theorem \ref{theodeK} for computing the value of $\alpha(G\Box K_t)$. For this, set $Z'={\frac{1}{|V(G)|}}Z\in \cp^{|V(G)|t}$.
We see that $Z'$ satisfies the conditions of the program (\ref{def:stab}) applied to the graph $G\Box K_t$. Indeed the orthogonality conditions 
(\ref{eq:cqchrom-orth1}) and (\ref{eq:cqchrom-orth2}) correspond exactly to the edges of $G\Box K_t$.
Therefore, we can deduce that $\alpha(G\Box K_t)\ge |V(G)|$. As the reverse inequality also holds (since $G \Box K_{t}$ can be covered by $|V(G)|$ cliques $K_t$), we have $\alpha(G\Box K_t)=|V(G)|$. Using the reduction of Chv\'atal in Theorem \ref{theoChvatal}, we can conclude that 
$\chi(G)\le t$. 

We now prove the reverse inequality: $t\le \chi(G)=:s$.  It is easy to see that $G\Box K_s$ can be properly colored with $s=\chi(G)$ colors.
Therefore, $\chi(G\Box K_s)=s$ holds.
We  construct a matrix $Y\in \cp^{|V(G)|s+1}$ satisfying the conditions of $\ref{2-chrom}$, which will imply $ t\le s$ and thus conclude the proof.
For this,  select $s$ subsets $S_1,\ldots,S_s\subseteq V(G\Box K_s)$ which  are stable sets in  $G\Box K_s$ and partition the vertex set of $G\Box K_s$.
 For $k\in [s]$, let $x^k\in \R^{|V(G)|s}$ denote the incidence vector of $S_k$ and set $y^k=(1, x^k)\in \R^{|V(G)|s+1}$. Finally, define the matrix
$Y=\frac{1}{s} \sum_{k=1}^s y^k(y^k)^T$. By construction,  $Y \in \mathcal \cp^{|V(G)|s+1}$ and  $Y$ satisfies conditions (\ref{eq:cqchrom-orth1}) and (\ref{eq:cqchrom-orth2}). 
Moreover $Y_{0,0}=1$, $Y_{0,ui} = Y_{ui,ui} =\frac{1}{s}$ for every $u \in V(G)$ and $i \in [s]$ and thus $Y$ also satisfies (\ref{eq:cqchrom-1}), (\ref{eq:cqchrom-sum-a}) and (\ref{eq:cqchrom-sum3}). Hence $Y$ is feasible for $\ref{2-chrom}$.
This concludes the proof.
 \end{proof}

\noindent
As an application we obtain the following `sandwich' inequalities for the quantum variants of the chromatic number.

\begin{corollary}\label{corsandqc}
For any graph $G$,
$
\lceil {\vartheta}^+(\overline G)\rceil \le \qchrom(G)\le \chi_q(G)\le  \chi(G).
$
\end{corollary}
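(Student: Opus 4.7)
The plan is to derive this three-term sandwich directly from the conic reformulations already established, namely Proposition \ref{def:cgqchrom} for the middle quantities and Corollary \ref{corqchi} for the two outer quantities, using only the nesting $\cp \subseteq \cgpsd \subseteq \dnn$ from (\ref{eqinclusion}). Since both $\chi_q(G)$ and $\qchrom(G)$ have been expressed as the minimum integer $t$ for which certain feasibility programs admit a solution $X\in \cgpsd^{|V(G)|t+1}$, while $\chi(G)$ and $\lceil \vartheta^+(\overline G)\rceil$ are the same minima over $\cp$ and $\dnn$ respectively, the whole corollary will amount to a monotonicity statement: enlarging the cone, or dropping a constraint, cannot increase a minimum.

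First I would dispatch the middle inequality $\qchrom(G)\le \chi_q(G)$. By Proposition \ref{def:cgqchrom}, $\chi_q(G)$ is obtained by minimizing $t$ over all $X\in\cgpsd^{|V(G)|t+1}$ satisfying conditions (\ref{eq:cqchrom-1}), (\ref{eq:cqchrom-sum-a}), (\ref{eq:cqchrom-sum3}), (\ref{eq:cqchrom-orth1}), (\ref{eq:cqchrom-orth2}), whereas $\qchrom(G)$ uses only the first four conditions. Any matrix feasible for the former program is therefore feasible for the latter, so the minimum value of $t$ can only decrease (or stay the same), giving $\qchrom(G)\le \chi_q(G)$ at once.

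Next I would prove the right-hand inequality $\chi_q(G)\le \chi(G)$. By the second half of Corollary \ref{corqchi}, $\chi(G)$ equals the minimum integer $t$ for which there is an $X\in \cp^{|V(G)|t+1}$ meeting all five conditions (\ref{eq:cqchrom-1})--(\ref{eq:cqchrom-orth2}). Since $\cp^{n}\subseteq \cgpsd^{n}$, any such $X$ also lies in $\cgpsd^{|V(G)|t+1}$ and therefore witnesses $\chi_q(G)\le t$; minimising over $t$ yields $\chi_q(G)\le \chi(G)$. For the left-hand inequality $\lceil \vartheta^+(\overline G)\rceil \le \qchrom(G)$ I would argue symmetrically using the inclusion $\cgpsd^n\subseteq \dnn^n$. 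Indeed, if $t=\qchrom(G)$, Proposition \ref{def:cgqchrom} supplies an $X\in\cgpsd^{|V(G)|t+1}\subseteq \dnn^{|V(G)|t+1}$ satisfying conditions (\ref{eq:cqchrom-1}), (\ref{eq:cqchrom-sum-a}), (\ref{eq:cqchrom-sum3}), (\ref{eq:cqchrom-orth1}). By the first half of Corollary \ref{corqchi}, this is exactly the certificate needed to show that the minimum $t$ yielding such a $\dnn$-feasible matrix is at most $\qchrom(G)$, and that minimum equals $\lceil \vartheta^+(\overline G)\rceil$.

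There is no real obstacle here: every step is a transparent "larger cone $\Rightarrow$ more feasible solutions $\Rightarrow$ smaller minimum" monotonicity, combined with the already-proven equivalences in Corollary \ref{corqchi}. The only minor point to watch is making sure that on the left-hand side the variant of Corollary \ref{corqchi} invoked is the one \emph{without} condition (\ref{eq:cqchrom-orth2}) (since $\qchrom$ does not enforce (O4)); this is precisely the "equivalently" clause of that corollary, so the argument goes through without change.
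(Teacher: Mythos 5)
Your proof is correct and follows essentially the same route the paper intends: Corollary~\ref{corsandqc} is stated as an immediate application of Corollary~\ref{corqchi} together with the cone nesting $\cp\subseteq\cgpsd\subseteq\dnn$ of~(\ref{eqinclusion}) and the fact that dropping condition~(\ref{eq:cqchrom-orth2}) only enlarges the feasible set. Your three monotonicity observations (enlarge the cone, drop a constraint) are exactly the intended deductions, and you correctly invoke the ``equivalently'' clause of Corollary~\ref{corqchi} on the $\dnn$ side where $\qchrom$ does not enforce~(\ref{eq:cqchrom-orth2}).
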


\noindent
The inequality 
$ \lceil {\vartheta}^+(\overline G)\rceil \le \qchrom(G)$ was  shown recently  in \cite{Briet:2013}. 
Moreover, there is a graph for which $\chi_{q}(G)<\chi(G)$ \cite{Cameron:2007} and one for which $\lceil {\vartheta}^+(\overline G)\rceil < \chi_{q}(G)$ \cite{Cubitt:2013}.
Note also that the quantum chromatic numbers are not upper bounded by the fractional chromatic number. For instance, for the 5-cycle $C_{n}$, $\chi_f(C_{5})=5/2$ while $\chi_q(C_{5})=3$. To see that $\chi_q(C_5)=3$ we use the result of \cite{Cameron:2007} showing that $\chi_q(G)\le 2$ if and only if $G$ is  bipartite.

\medskip
We further observe that, in Proposition \ref{lem:equithetaplus}, the implication $\ref{1-chrom}$ $\Rightarrow$ $\ref{2-chrom}$ does not hold when selecting the cone $\mathcal K=\cp$.

\begin{remark}\label{remqcp}
As we just saw in Corollary \ref{corqchi}, the 
 smallest integer $t$ for which there exists a matrix $X\in \cp^{|V(G)|t+1}$ satisfying Proposition \ref{lem:equithetaplus} $\ref{2-chrom}$ is equal to the chromatic number $\chi(G)$.
 On the other hand, as a direct application of Theorem~\ref{theoDR}, we see that the smallest integer $t$ for which there exists a matrix $X\in \cp^{|V(G)|}$ satisfying Proposition \ref{lem:equithetaplus} $\ref{1-chrom}$ is equal to $\lceil \chi_f(G)\rceil$, where $\chi_f(G)$ is the fractional chromatic number of $G$.
 The inequality $\lceil \chi_f(G)\rceil \le \chi(G)$ is consistent with the inequality $t\le s$ corresponding to the implication $\ref{2-chrom}$ $\Rightarrow$ $\ref{1-chrom}$ in 
 Proposition \ref{lem:equithetaplus}.
 
Moreover, the parameters $\lceil \chi_f(G)\rceil$ and $\chi(G)$ can differ significantly.
For $n\ge 2r$, consider the Kneser graph $K(n,r)$,  whose vertices are the subsets of size $r$ of $[n]$ and where two vertices are adjacent if the sets are disjoint.
 Then,    $\chi_f(K(n,r)) = \frac{n}{r}$~\cite{Lovasz:1979} and $\chi(K(n,r))= n-2r+2$ \cite{Lovasz:1978}. 
 This shows that the  implication $\ref{1-chrom}$ $\Rightarrow$ $\ref{2-chrom}$ does not hold in Proposition \ref{lem:equithetaplus} in the case $\mathcal K=\cp.$
\end{remark}

We conclude with a comparison of the quantum chromatic numbers with the generalized theta number $\Theta^{\cgpsd}(G)$, obtained by selecting the cone $\mathcal K=\cgpsd$ in the definition (\ref{eqthetas-K}).

\begin{proposition}\label{propsandwichchiq}
For any graph $G$, $\ceil{\vartheta^+(\overline{G})}\le \ceil {\Theta^{\cl(\cgpsd)}(G)} \le \ceil{\Theta^{\cgpsd}(G)} \le \qchrom(G) \le \chi_q(G).$
\end{proposition}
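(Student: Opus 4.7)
The plan is to prove the four inequalities in sequence, with most of the work concentrated on the middle one $\lceil \Theta^{\cgpsd}(G) \rceil \le \qchrom(G)$. The outermost two inequalities follow from cone inclusions: since $\cgpsd \subseteq \cl(\cgpsd) \subseteq \dnn$ and minimizing over a smaller cone in (\ref{eqthetas-K}) yields a larger value, $\Theta^{\dnn}(G) \le \Theta^{\cl(\cgpsd)}(G) \le \Theta^{\cgpsd}(G)$, and the identity $\Theta^{\dnn}(G) = \vartheta^+(\overline G)$ is (\ref{eqlinktheta}); taking ceilings preserves these. The rightmost inequality $\qchrom(G) \le \chi_q(G)$ is immediate from Definitions \ref{def:gqchrom} and \ref{def:qchrom}, since $\chi_q$ imposes the additional orthogonality constraint (\ref{eq:qchrom-ort2}).

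For the middle inequality I would set $t = \qchrom(G)$ and use Proposition \ref{def:cgqchrom} to obtain $X \in \cgpsd^{|V(G)|t+1}$ satisfying (C1), (C3a), (C3b) and (O3). Symmetrizing $X$ under the action of $\Sym(t)$ on the second index preserves membership in $\cgpsd$ (each $\pi(X)$ has a Gram representation by psd matrices obtained by permuting the original one) and puts $X$ in the block form (\ref{eq:block}) with blocks $A, B \in \mathcal S^{|V(G)|}$. From (C1), (C3a) and (O3) we get $\alpha = 1$, $a = e/t$, and $A_{uv} = 0$ for $\{u,v\} \in E(G)$; condition (C3b) gives the diagonal identity $t A_{uu} + t(t-1) B_{uu} = 1$ for every $u \in V(G)$. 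Taking the Schur complement at the $(0,0)$ entry and applying Lemma \ref{lem:block} yields $A - B \succeq 0$ and $A + (t-1)B - J/t \succeq 0$. But the diagonal of the second matrix vanishes by the identity above, and a psd matrix with zero diagonal must itself be zero, so in fact $A + (t-1)B = J/t$. Consequently $t^2 A - J = t(t-1)(A-B) \succeq 0$.

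Now $A$ is a principal submatrix of $X$, hence $A \in \cgpsd^{|V(G)|}$ (its Gram representation is inherited), so $Z := t^2 A$ lies in $\cgpsd^{|V(G)|}$, vanishes on edges, and satisfies $Z - J \succeq 0$. The remaining point is that $Z_{uu}$ might a priori exceed $t$; here the inclusion $\cgpsd \subseteq \dnn$ forces $X \ge 0$ entrywise, whence $B_{uu} \ge 0$ (as an average of nonnegative entries of $X$), so $A_{uu} = 1/t - (t-1)B_{uu} \le 1/t$ and $Z_{uu} \le t$. Adding the diagonal matrix $D = \sum_{u} (t - Z_{uu})\, e_u e_u^T \in \cp \subseteq \cgpsd$ then yields $Z' := Z + D \in \cgpsd$ feasible for the program defining $\Theta^{\cgpsd}(G)$ with $Z'_{uu} = t$, giving $\Theta^{\cgpsd}(G) \le t = \qchrom(G)$ and hence the ceiling inequality. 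The main obstacle is compensating for the absence of the orthogonality condition (O4) in the definition of $\qchrom$; this is precisely why Proposition \ref{lem:equithetaplus} does not state a (iii)$\Rightarrow$(i) implication for $\cgpsd$. The trick is that the Schur complement combined with the zero-diagonal psd lemma still forces the identity $A + (t-1)B = J/t$, and entrywise nonnegativity, obtained for free from $\cgpsd \subseteq \dnn$, is exactly what is needed to bound $A_{uu}$ from above.
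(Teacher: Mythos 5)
Your proof is correct and takes essentially the same route as the paper: cone inclusions for the outer inequalities, and for the middle one, the symmetrization to block form, Schur complement plus Lemma \ref{lem:block}, the trace argument forcing $A+(t-1)B=J/t$, and then padding $t^2A$ with a nonnegative diagonal. You in fact spell out two small steps the paper leaves implicit — why $A$ (as a principal submatrix of a $\cgpsd$ matrix) is itself in $\cgpsd$, and why $B_{uu}\ge 0$ (via $\cgpsd\subseteq\dnn$) forces $t^2A_{uu}\le t$ — which is a welcome addition.
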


\begin{proof}
Combining the equality  $\Theta^\dnn(G) = \vartheta^+(\overline{G})$ (from (\ref{eqlinktheta})) with the inclusions $\cgpsd\subseteq \cl(\cgpsd)\subseteq \dnn$, we obtain  the two left most inequalities.

We now show the inequality $\ceil{\Theta^{\cgpsd}(G)} \le \qchrom(G)$.
For this, we use the fact that  $\ceil{\Theta^{\cgpsd}(G)}$ is the minimum integer $t$ for which  Proposition \ref{lem:equithetaplus} $\ref{1-chrom}$ holds when selecting $\mathcal{K}=\cgpsd$, and that 
 $\qchrom(G)$ is by definition the minimum integer $t$ for which  Proposition \ref{lem:equithetaplus} $\ref{3-chrom}$ holds with $\mathcal{K}=\cgpsd$. Therefore,  in order to prove  that $\ceil{\Theta^{\cgpsd}(G)} \le \qchrom(G)$ holds, it suffices to  show that Proposition \ref{lem:equithetaplus} $\ref{3-chrom}$ implies Proposition \ref{lem:equithetaplus} $\ref{1-chrom}$ also in the case $\K=\cgpsd$. This is what we do next.

Let $Y \in \cgpsd$ satisfy Proposition \ref{lem:equithetaplus} $\ref{3-chrom}$ with $\K=\cgpsd$. Again we may assume without loss of generality that $Y$ has the block-form (\ref{eq:block}). First we observe that we can use the initial part of the proof $\ref{2-chrom} \Rightarrow$ $\ref{1-chrom}$ to show that 
$A + (t-1)B - \frac{1}{t}J = 0$. The key observation is that condition (\ref{eq:cqchrom-sum3}) still implies that $\Tr(A + (t-1)B - \frac{1}{t}J)=0$.
Next, following the proof of $\ref{2-chrom} \Rightarrow$ $\ref{1-chrom}$, we consider the matrix $X = t^2 A$. Then $X \in \cgpsd$, $X_{uv} = 0$ for every $\{u,v\} \in E(G)$ and $X-J \succeq 0$. Since we started with a solution $Y$ of $\ref{3-chrom}$ (instead of a solution for $\ref{2-chrom}$), we can only derive that $X_{uu} \le t$ for any $u \in V(G)$. We now build a solution $X'$ by adding to $X$ a diagonal matrix $D$ with entries $D_{uu} = t-X_{uu} \geq 0$ for any $u \in V(G)$. Hence $X' \in \cgpsd$ and satisfies all the conditions of $\ref{1-chrom}$. This concludes the proof.
 \end{proof}

\subsection{Approximating the quantum graph parameters} \label{secPsi}

In this section we show how one can use the convex sets  $\Knceps$ introduced earlier in Section \ref{secapprsets} to define parameters that approximate the quantum graph parameters. 
We give the details only for the quantum chromatic number $\chi_q(G)$, but the same reasoning can be extended to the other parameters $\qchrom(G)$, $\alpha_q(G)$ and $\qindep(G)$. 

The construction will go as follows. In a first step we  reformulate $\chi_q(G)$ as a single `aggregated' minimization program over an affine section of the cone $\cgpsd$. When replacing the cone $\cgpsd$ by its closure $\cl(\cgpsd)$ we get the parameter $\wtilde \chi_q(G)$, satisfying $\chi(G)\ge \wtilde \chi_q(G)$. The second step will consist of writing the dual of this aggregated conic program over the cone $\cl(\cgpsd)$, which is thus a  maximization program over the dual cone $\cgpsd^*$, and show that strong duality holds.
Finally we  define the new graph parameters $\Psi_\epsilon(G)$ by replacing in this dual conic  program the cone $\cgpsd^*$ by  the convex sets $\Knceps$.

\medskip
We start with the formulation of $\chi_q(G)$ from Proposition \ref{def:cgqchrom}.
For convenience, we introduce the matrix $A^t_u \in \mathcal S^{nt+1}$ (for  $u\in V(G)$, $t\in [n]$), with entries 
$A_u^t(0,0) = A_u^t (ui,uj) = 1$ $\forall i,j \in [t]$, $A_u^t (0,ui) =A_u^t (ui,0) = -1$ $\forall i \in [t]$ and zero elsewhere, 
and we  set $A^t=\sum_{u\in V(G)}A^t_u$. Observe that each matrix $A^t_u$ is positive semidefinite (with rank 1).
These matrices are useful to formulate the constraints defining $\chi_q(G)$. 
Indeed, if (\ref{eq:cqchrom-1}) holds then the two conditions (\ref{eq:cqchrom-sum-a}), (\ref{eq:cqchrom-sum3}) are equivalent to $\langle A^t,X\rangle =0$. 
Therefore, by Proposition \ref{def:cgqchrom}, $\chi_q(G)$ is equal to the smallest $t\in \N$ for which there exists $X\in \cgpsd^{nt+1}$ satisfying the conditions (\ref{eq:cqchrom-1}), (\ref{eq:cqchrom-orth1}), (\ref{eq:cqchrom-orth2}) and $\langle A^t, X\rangle =0$.
We can now reformulate $\chi_q(G)$ as the optimal value of a single conic optimization program over the cone $\cgpsd$.

\begin{proposition}\label{propoptchiq}
Let $G$ be a graph and set $n = |V(G)|$. The quantum chromatic number  $\chi_q(G)$ is equal to the optimal value of the following program: 
\begin{equation}\label{optchiq}
\begin{array}{ll}
\min \ \sum_{t \in [n]} t X_{0,0}^t \quad \text{\rm s.t.} & X^t \in \cgpsd^{nt+1} \quad \forall t \in [n],\\
& \sum_{t \in [n]}  X_{0,0}^t = 1,\ \sum_{t\in [n]}  \langle A^t,X^t\rangle =0,\\ 
& X_{ui,vi}^t = 0 \quad \forall i\in [t], \, \forall \{u,v\} \in E(G), \; \forall t \in [n],\\
& X_{ui,uj}^t = 0 \quad \forall i\neq j\in [t], \, \forall u \in V(G), \; \forall t \in [n].
\end{array}
\end{equation}
\end{proposition}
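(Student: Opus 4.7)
The plan is to prove $\chi_q(G) = v^*$, where $v^*$ denotes the optimal value of (\ref{optchiq}), by establishing two inequalities.

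For $v^* \le \chi_q(G)$, I will take an optimal feasible solution $Y \in \cgpsd^{nt^*+1}$ for $\chi_q(G) = t^*$ given by Proposition~\ref{def:cgqchrom}, and exhibit a feasible solution for (\ref{optchiq}) of objective value $t^*$ by setting $X^{t^*} = Y$ and $X^t = 0$ for $t \neq t^*$. All constraints of (\ref{optchiq}) are immediate except the aggregated equation $\sum_t \langle A^t, X^t\rangle = 0$, which reduces to $\langle A^{t^*}, Y\rangle = 0$. This follows entry by entry: writing $A^{t^*}_u$ as the rank-one positive semidefinite matrix $b_u b_u^T$, where $b_u$ has coordinate $1$ at index $0$, $-1$ at each $(u,i)$ for $i \in [t^*]$, and $0$ elsewhere, conditions (\ref{eq:cqchrom-1}), (\ref{eq:cqchrom-sum-a}), (\ref{eq:cqchrom-sum3}) yield $\langle A^{t^*}_u, Y\rangle = 1 - 2 + 1 = 0$ for each $u \in V(G)$, and summing over $u$ gives $\langle A^{t^*}, Y\rangle = 0$. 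The objective value is $t^* Y_{0,0} = t^* = \chi_q(G)$.

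For the reverse inequality $v^* \ge \chi_q(G)$, I will let $(X^1,\ldots,X^n)$ be any feasible solution of (\ref{optchiq}). The key observation is that each scalar $\langle A^t_u, X^t\rangle = b_u^T X^t b_u$ is nonnegative because $X^t \in \cgpsd \subseteq \mathcal S^{nt+1}_+$; combined with the single aggregated constraint $\sum_{t,u}\langle A^t_u, X^t\rangle = 0$, this forces each of these scalars to vanish, and hence $X^t b_u = 0$ by the standard fact that $v^T M v = 0$ with $M \succeq 0$ implies $Mv = 0$. Reading $X^t b_u = 0$ at row $0$ yields $X^t_{0,0} = \sum_i X^t_{0,ui}$; summing the same equation at rows $uj$ over $j \in [t]$ and reusing the previous identity yields $\sum_{i,j} X^t_{ui,uj} = X^t_{0,0}$.

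Consequently, whenever $X^t_{0,0} > 0$, the rescaling $Y^t := X^t/X^t_{0,0}$ lies in $\cgpsd^{nt+1}$ and satisfies all five conditions of Proposition~\ref{def:cgqchrom} (the orthogonality conditions (\ref{eq:cqchrom-orth1}) and (\ref{eq:cqchrom-orth2}) are scale-invariant), hence $\chi_q(G) \le t$. Since $t\, X^t_{0,0} \ge \chi_q(G)\, X^t_{0,0}$ for every $t \in [n]$ (trivially when $X^t_{0,0} = 0$, and by $t \ge \chi_q(G)$ otherwise), one concludes
\[
v^* \;=\; \sum_{t\in [n]} t\, X^t_{0,0} \;\ge\; \chi_q(G)\, \sum_{t\in [n]} X^t_{0,0} \;=\; \chi_q(G),
\]
which finishes the proof. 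The main technical step is the passage from the single aggregated equation $\sum_t \langle A^t, X^t\rangle=0$ to the pair of normalization conditions (\ref{eq:cqchrom-sum-a}) and (\ref{eq:cqchrom-sum3}) of Proposition~\ref{def:cgqchrom}, enabled by the rank-one positive semidefinite structure of each $A^t_u$ combined with $X^t \succeq 0$.
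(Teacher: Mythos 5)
Your proof is correct and follows essentially the same two-sided argument as the paper: embedding an optimal solution of Proposition~\ref{def:cgqchrom} into~(\ref{optchiq}) as a singleton in one direction, and rescaling by $X^t_{0,0}$ plus a convexity/averaging bound in the other. The one small difference is cosmetic: you make explicit (via the rank-one decomposition $A^t_u=b_ub_u^T$ and $X^tb_u=0$) the fact that the single aggregated constraint $\sum_t\langle A^t,X^t\rangle=0$ forces the normalizations (\ref{eq:cqchrom-sum-a}) and (\ref{eq:cqchrom-sum3}) for each $t$, whereas the paper states this equivalence (and the positivity of the $A^t_u$) just before the proposition and leaves it implicit in the proof; you also bound $\sum_t tX^t_{0,0}\ge\chi_q(G)\sum_t X^t_{0,0}$ over all $t$ with $X^t_{0,0}>0$, while the paper picks the minimal such $t$, but these are the same idea.
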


\begin{proof}
Set $t=\chi_q(G)$ and let  $\mu$ denote the optimal value of the program (\ref{optchiq}).

Let $(t,X)$ be a solution for the program from Proposition \ref{def:cgqchrom} defining $\chi_q(G)$. We obtain a solution $X^1,\ldots,X^n$ to the program (\ref{optchiq}) by setting $X^t=X$ and $X^i=0$ if $i\in[n]\setminus \{ t\}$. This shows that $\mu \le t$.

Conversely, let $X^1,\ldots,X^n$ be a feasible solution for the program (\ref{optchiq}) and let $s$  be the minimum $i\in [n]$ such that $X_{0,0}^i \neq 0$. Then, the matrix  $X=X^s/X^s_{0,0}$
 is feasible for the program in Proposition \ref{def:cgqchrom}. Moreover we have: $t\le s  =s\sum_{i\in [n]}X^i_{0,0}= s \sum_{i \geq s} X_{0,0}^{i} \leq 
 \sum_{i \geq s} i X_{0,0}^{i} =  \sum_{i\in [n]} i X_{0,0}^{i}=\mu$. This shows that $t\le \mu$ and thus equality $\chi_q(G)=\mu$ holds.
 This also shows that program (\ref{optchiq}) indeed has an optimal solution, thus justifying writing `min' rather than `inf' in (\ref{optchiq}).
 \end{proof}

As the problem of deciding whether $\chi_q(G)\le 3$ is NP-hard \cite{Ji:2013}, it follows that linear optimization over affine sections of the completely positive semidefinite cone is an NP-hard problem.

\medskip
It is convenient to  rewrite  program (\ref{optchiq})  in a more compact way. For this set $N=\sum_{t=1}^n (nt+1)$, where $n=|V(G)|$, and  define the matrix  $A=\oplus_{t=1}^n A^t\in \mathcal S^N$.
Let  $E^t_{0,ui}$, $E^t_{ui,vj}$ denote the elementary matrices in $\mathcal S^{nt+1}$ and  let $\widetilde E^t_{0,ui}, \widetilde E^t_{ui,vj}$ denote their extensions to $\mathcal S^N$ obtained by adding zero entries.
Moreover, set  $C=\oplus_{t=1}^n t E^t_{0,0}$ and 
$B=\oplus_{t=1}^n E^t_{0,0}\in \mathcal S^N$.
Then we can  rewrite the program  (\ref{optchiq}) as follows:
\begin{equation}\label{eqchiqprimal}
\begin{array}{ll}
\chi_q(G)=\min\  \langle C,X\rangle \ \text{ s.t. }  & X\in \cgpsd^N,\ \langle B,X\rangle =1,\ \langle A,X\rangle =0,\ \\
& \langle \widetilde E^t_{ui,vi},X\rangle =0 \quad \forall i\in [t],\ \forall  \{u,v\}\in E(G),\ \forall t\in [n],\\
&  \langle \widetilde E^t_{ui,uj},X\rangle =0 \quad \forall i\ne j\in [t],\ \forall u\in V(G),\ \forall t\in [n].
\end{array}
\end{equation}

If we replace the cone $\cgpsd$ by its closure $\cl(\cgpsd)$ in the program (\ref{eqchiqprimal}), then  its optimal value is equal to  $\wtilde \chi_q(G)$ and we have: $\wtilde \chi_q(G) \le \chi_q(G).$ 
Note that it is not clear whether these two parameters coincide.
 This is 
because  the matrix $A$ is psd so that it belongs to the dual cone $\cgpsd^*$,  and thus  the constraint $\langle A,X\rangle=0$ implies that any feasible solution $X$ of (\ref{eqchiqprimal}) lies on the border of the cone 
$\cgpsd$. 
On the other hand, it is easy to verify that the result of Proposition \ref{propoptchiq} (and its proof) extend to the case when the cone $\cgpsd$ is replaced by its closure $\cl(\cgpsd)$. Hence, $\wtilde \chi_q(G)$ can be equivalently defined by using the program from Proposition \ref{def:cgqchrom} after replacing the cone $\cgpsd$ by its closure $\cl(\cgpsd)$.
Using this, Corollary \ref{corqchi} and the fact that $\cgpsd \subseteq \cl(\cgpsd) \subseteq \dnn$, we have the following inequalities:
\begin{equation}\label{eqtildechiqtheta}
\lceil\vartheta^+(\overline G)\rceil \le \wtilde \chi_q(G) \le \chi_q(G).
\end{equation} 

The dual program of (\ref{eqchiqprimal}) reads:
\begin{equation}\label{eqchiqdual}
\lambda_q(G):=\sup \ \lambda \ \text{ s.t. }  M=C-\lambda B -\mu A -\sum y^t_{u,v,i} \widetilde E^t_{ui,vi} -\sum z^t_{u,i,j} \widetilde E^t_{ui,uj} \in \cgpsd^{N*},
\end{equation}
where the variables are $\lambda,\mu, y^t_{u,v,i}$ and $z^t_{u,i,j}$, the first summation is over $t\in [n]$, $i\in [t]$ and 
$\{u,v\}\in E(G)$, and the second summation is over $t\in [n]$, $i\ne j\in [t]$ and $u\in V(G)$.
By weak duality,  we obtain  the inequality: $\lambda_q(G)\le \wtilde \chi_q(G)\le \chi_q(G)$. 

Moreover,  the program (\ref{eqchiqdual}) is strictly feasible, hence there is no duality gap and the optimal value of (\ref{eqchiqdual}) is equal to $\wtilde \chi_q(G)$; that is, 
$\lambda_q(G)=\wtilde \chi_q(G)\le \chi_q(G)$. 
To see that (\ref{eqchiqdual}) is strictly feasible, define the matrix  $M^t= (t+n^2) E^t_{0,0} + A^t -\sum_{u\in V(G)} \sum_{i\ne j\in [t]} E^t_{ui,uj}$ and set $M=\oplus_{t=1}^n M^t$.
Then, $M$ is feasible for the program (\ref{eqchiqdual}). Moreover, $M$ lies in the interior of $\cgpsd^*$ since $M \succ 0$, as $M^t\succ 0$ for all $t$. (Indeed note that  the entries are $M^t_{0,0}= n+t+n^2$,
$M^t_{0,ui}=-1$, $M^t_{ui,ui}=1$ and all other entries are zero, and take a Schur complement to see that $M^t\succ 0$).

\medskip
We  now introduce the new parameter $\Psi_\epsilon(G)$, which is obtained by  replacing in the program (\ref{eqchiqdual}) the cone $\cgpsd^*$ by the convex set $\Knceps$.

\begin{definition}\label{def:psi}
For $\epsilon\ge 0$,  
let $\Psi_\epsilon(G)$ denote the optimal value of the program:
\begin{equation}\label{eqPsieps}
\sup \ \lambda \ \text{\rm  s.t. }  M=C-\lambda B -\mu A -\sum y^t_{u,v,i} \widetilde E^t_{ui,vi} -\sum z^t_{u,i,j} \widetilde E^t_{ui,uj} \in \Knceps.
\end{equation}
\end{definition}

First we relate the parameter $\Psi_\epsilon(G)$ to the classical theta number.
\begin{lemma}\label{lem:Psi-epsilon}
For $\epsilon \ge 0$, we have: $\lceil \vartheta^+(\overline G)\rceil \le \Psi_\epsilon(G)$, with equality if $\epsilon=0$.
\end{lemma}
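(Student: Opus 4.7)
The plan is to prove the equality case $\Psi_0(G)=\lceil\vartheta^+(\overline G)\rceil$ via conic duality, and then obtain the inequality for $\epsilon>0$ by monotonicity in the feasible set.

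For the equality $\Psi_0(G)=\lceil\vartheta^+(\overline G)\rceil$, first I would reinterpret $\Psi_0(G)$ using Lemma~\ref{lemBall}, which gives $\K_{\text{nc},0}=\dnn^{N*}$. Thus the program~(\ref{eqPsieps}) with $\epsilon=0$ is exactly the program obtained from the dual~(\ref{eqchiqdual}) by replacing $\cgpsd^{N*}$ with $\dnn^{N*}$. By conic duality this is precisely the dual of the aggregated primal~(\ref{eqchiqprimal}) after $\cgpsd$ is replaced by $\dnn$. The proof of Proposition~\ref{propoptchiq} transfers verbatim to the $\dnn$ setting (it uses only that the underlying matrix cone is closed under scaling and restriction to principal sub-blocks), so this $\dnn$-primal equals the smallest $t$ for which the system in Proposition~\ref{def:cgqchrom} has a $\dnn$-solution; by Corollary~\ref{corqchi} this number is $\lceil\vartheta^+(\overline G)\rceil$.

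Next I would invoke strong duality between this $\dnn$-primal and its dual. Strong duality holds as soon as the dual is strictly feasible, and the matrix $M=\bigoplus_{t=1}^n M^t$ constructed right after~(\ref{eqchiqdual}) is positive definite. Since $\dnn^N\subseteq \mathcal S^N_+$, we have $\mathcal S^N_+\subseteq \dnn^{N*}$, and positive definiteness of $M$ gives $\langle M,X\rangle>0$ for every nonzero $X\in\dnn^N$, so $M$ lies in the interior of $\dnn^{N*}$. Strong duality thus yields
\[
\Psi_0(G)\;=\;\text{(dual optimum over }\dnn^{N*})\;=\;\text{(primal optimum over }\dnn)\;=\;\lceil\vartheta^+(\overline G)\rceil.
\]

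Finally, for $\epsilon>0$, Lemma~\ref{lemBall} also gives the inclusion $\K_{\text{nc},0}\subseteq \Knceps$. Since~(\ref{eqPsieps}) is a supremum and enlarging the cone in the constraint $M\in\Knceps$ only enlarges the feasible set of $(\lambda,\mu,y,z)$, we obtain $\Psi_\epsilon(G)\ge \Psi_0(G)=\lceil\vartheta^+(\overline G)\rceil$. Combining the two steps gives the claimed inequality, with equality at $\epsilon=0$. The main obstacle is not computational but conceptual: one must carefully observe that the aggregation argument of Proposition~\ref{propoptchiq} and the strict-feasibility certificate of~(\ref{eqchiqdual}) both remain valid after the cone swap $\cgpsd\rightsquigarrow\dnn$, so that the known identification $\K_{\text{nc},0}=\dnn^{N*}$ translates~(\ref{eqPsieps}) into precisely the conic dual of the doubly-nonnegative formulation of $\lceil\vartheta^+(\overline G)\rceil$.
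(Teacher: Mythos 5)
Your proof is correct and follows essentially the same route as the paper's: both use Lemma~\ref{lemBall} to identify $\K_{\text{nc},0}=\dnn^{N*}$ (and the inclusion $\dnn^{N*}\subseteq\Knceps$ for $\epsilon>0$), then invoke strong conic duality for the $\dnn$ version of~(\ref{eqchiqprimal})--(\ref{eqchiqdual}) (noting that $\dnn$ is closed and the certificate $M\succ 0$ makes the dual strictly feasible), and finally identify the $\dnn$-primal value with $\lceil\vartheta^+(\overline G)\rceil$ by extending the aggregation argument of Proposition~\ref{propoptchiq} and applying Proposition~\ref{lem:equithetaplus}/Corollary~\ref{corqchi}. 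Your write-up simply makes explicit a couple of points the paper leaves implicit (the interior-of-$\dnn^{N*}$ step and the monotonicity in $\epsilon$), but the logic is the same.
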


\begin{proof}
By Lemma \ref{lemBall}, we have the inclusion  $\dnn^*\subseteq \Knceps$, with equality if $\epsilon=0$. Hence the lemma will follow if we can show that the optimal value of the program (\ref{eqPsieps}) is equal to $\lceil \vartheta^+(\overline G)\rceil$ when we replace the set $\Knceps$ by its subset $\dnn^*$.

In other words, let us consider the program (\ref{eqchiqdual}) where we replace the cone $\cgpsd^*$ by the cone $\dnn^*$.
Using the same argument as above, we can conclude that its optimal value is equal to the optimal value of the program (\ref{eqchiqprimal}) where we replace the cone $\cgpsd$ by the cone $\dnn$ (strong duality holds and use the fact that  the cone $\dnn$ is closed).
Next, observe that this latter value (which is equal to the optimal value of the program (\ref{optchiq}) when we replace $\cgpsd$ by $\dnn$) is equal to $\lceil \vartheta^+(\overline G)\rceil$.
This can be seen by combining Proposition~\ref{lem:equithetaplus} together with the fact that the result of Proposition \ref{propoptchiq} (and its proof) extends to the case when we replace the cone $\cgpsd$ by the cone $\dnn$.
\end{proof}

As the sets $\Knceps$ aim to approximate the dual cone $\cgpsd^*$, the parameters $\Psi_\epsilon(G)$ aim to approximate the quantum coloring number $\chi_q(G)$. 
However, as there is no apparent inclusion relationship between $\cgpsd^{*}$ and $\Knceps$, we do not know the exact relationship  between  $\Psi_\epsilon(G)$ and $\chi_q(G)$. Moreover, as the cone $\cgpsd$ is not known to be closed, there is a possible gap between the two parameters $\chi_q(G)$ and $\wtilde \chi_q(G)$. Nevertheless, what we can claim is the following relationship under Connes' embedding conjecture.

\begin{lemma}
Assume that  Connes' embedding conjecture holds.
 Then,  $\wtilde \chi_q(G) \leq \inf_{\epsilon>0} \Psi_\epsilon(G)$.
 \end{lemma}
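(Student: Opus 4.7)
The plan is to combine the strong duality identity $\lambda_q(G)=\wtilde\chi_q(G)$, already established in the paragraph preceding Definition~\ref{def:psi}, with the ball-form of Connes' embedding conjecture to show that for every fixed $\epsilon>0$ the feasible region of the dual program~(\ref{eqchiqdual}) defining $\lambda_q(G)$ is contained in the feasible region of the program~(\ref{eqPsieps}) defining $\Psi_\epsilon(G)$.

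The key step is to show that, under Connes' conjecture, $\cgpsd^{N*}\subseteq\Knceps$ for every $\epsilon>0$. Let $M\in\cgpsd^{N*}$. By Lemma~\ref{lemball}, the associated polynomial $p_M$ is trace positive on $\Bnc$, and $p_M$ is symmetric since $M$ is. By the ball formulation of Conjecture~\ref{conj:equivconnes} (equivalent to Connes' conjecture via the remark of Burgdorf recalled in Section~\ref{secdual}), trace positivity of $p_M$ on $\Bnc$ implies $p_M+\epsilon\in\trMball$ for every $\epsilon>0$, which is by definition the condition $M\in\Knceps$. This is the reverse of the inclusion used in Lemma~\ref{lembasic} but follows from essentially the same argument.

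With this inclusion at hand, the programs~(\ref{eqchiqdual}) and~(\ref{eqPsieps}) share an identical objective $\sup\lambda$ and identical affine constraints on the matrix $M=C-\lambda B-\mu A-\sum y^t_{u,v,i}\wtilde E^t_{ui,vi}-\sum z^t_{u,i,j}\wtilde E^t_{ui,uj}$; only the conic constraint differs, and the inclusion shows that every feasible tuple $(\lambda,\mu,y,z)$ for~(\ref{eqchiqdual}) is feasible for~(\ref{eqPsieps}). Hence $\lambda_q(G)\le\Psi_\epsilon(G)$ for every $\epsilon>0$, and taking the infimum over $\epsilon>0$ together with $\wtilde\chi_q(G)=\lambda_q(G)$ gives the desired inequality $\wtilde\chi_q(G)\le\inf_{\epsilon>0}\Psi_\epsilon(G)$.

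I do not anticipate a serious obstacle: the argument is a compactly packaged application of strong duality together with the Connes-equivalent implication that trace positivity on $\Bnc$ forces membership in $\trMball$ after an arbitrarily small additive perturbation. The only subtle point worth flagging is that Conjecture~\ref{conj:equivconnes} is stated for the non-commutative hypercube $\Qnc$, so one has to invoke instead its ball-analogue; this is permissible precisely because of Burgdorf's remark recorded in Section~\ref{secdual}, and because the homogeneity of $p_M$ plays no role on the $(\supseteq)$ side of the equivalence, only the $(\subseteq)$ direction is invoked.
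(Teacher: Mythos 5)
Your proof is correct and takes essentially the same route as the paper's: the paper invokes Lemma~\ref{lembasic} (whose ``equality under Connes'' clause is precisely the chain Lemma~\ref{lemball} $\rightarrow$ ball-form of Conjecture~\ref{conj:equivconnes} that you spell out) to get $\cgpsd^*\subseteq\Knceps$, and then compares the two conic programs exactly as you do. The only cosmetic difference is that you unfold the content of Lemma~\ref{lembasic} rather than citing it.
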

 
 \begin{proof}
 If Connes' conjecture holds then  $\cgpsd^* \subseteq \Knceps$  for any $\epsilon>0$ (Lemma \ref{lembasic}). The result now follows using the definition of $\Psi_\epsilon(G)$ and the definition of $\wtilde \chi_q(G)$ as the optimal value of (\ref{eqchiqdual}).
 \end{proof}

Finally, let us observe that the parameter $\Psi_\epsilon(G)$ can be obtained as the limit of a sequence of semidefinite programs.
For this, recall that $M$ lies in $\Knceps$ if the polynomial $p_M+\epsilon$ admits a decomposition of the form
$p_M+ \epsilon= g+h$, where   $g= \sum_{j=1}^{m_0} f_jf_j^* +\sum_{i=1}^n \sum_{j_i=1}^{m_i} g_{j_i} (1-X_i^2) g_{j_i}^*$
for some $f_j,g_{j_i}\in \ncP$ and $m_0,m_i\in \N$, and $h$ is a sum of commutators.
Now, fixing an integer $k$ and restricting to those decompositions of $p_M+\epsilon$ where all terms $f_jf_j^*$ and $g_{j_i} (1-X_i^2) g_{j_i}^*$ have degree at most $2k$, we get a parameter $\Psi_\epsilon^k(G)$ which can be computed via a semidefinite program (see, e.g., \cite{Burgdorf:2011} for details).
Moreover, $\Psi^k_\epsilon(G)$ tends to $\Psi_\epsilon(G)$ as $k$ goes to infinity.

\section{Concluding remarks}\label{secfinal}

We have introduced  the cone $\cgpsd$ of completely positive semidefinite matrices and studied some basic properties. 
However, the structure of this cone remains largely unknown. The first fundamental  open question is to settle whether the  cone $\cgpsd$ is closed.
A closely related open question is  whether the existence of a Gram representation by  {\em infinite} psd matrices in $\mathcal S^\N$ implies the existence of another Gram representation by {\em finite} psd matrices, the answer is positive if $\cgpsd$ is closed (in view of Theorem \ref{theoclosed}). 
A first step has been made recently in  \cite{BLP}, where it is shown that  the closure of the cone $\cgpsd$ consists of the  matrices admitting a Gram representation by positive elements in a specific finite von Neumann algebra $\mathcal M$. 
This algebra  $\mathcal M$ is constructed as the tracial ultraproduct of the usual matrix algebras $\R^{k\times k}$ ($k\ge 1$) and represents a natural extension of the algebra $\mathcal S^\N_+$.

The closedness question for $\cgpsd$ is quite similar in spirit to several open problems in the quantum information literature (see e.g. \cite{Leung:2013,Paulsen:2013}). For instance, it is not known  whether the set  of quantum bipartite correlations is closed (see \cite{Wehner:2008}). The recent work \cite{MR:2014} shows that the set of quantum bipartite correlations can be obtained as the projection of an affine section of the cone $\cgpsd$ and that  it would be  closed if  $\cgpsd$ is closed.

\medskip
If in   the definition of the quantum chromatic number $\chi_q(G)$  from Definition~\ref{def:gqchrom}, 
  instead of requiring that $\rho,\rho_u^i$ lie in 
$\mathcal S^d_+$ (for some $d\ge 1$), we require that $\rho,\rho_u^i$ lie in $\mathcal S^\N_+$, we obtain a (possibly different) parameter that we denote by $\chi_{q}^\infty(G)$.
Equivalently,  $\chi^\infty_q(G)$ can be formulated as linear optimization over an affine section of the cone $\mathcal{CS}_{\infty +}$
(the analogue of the fact that $\chi_q(G)$ can be formulated as linear optimization over an affine section of  $\cgpsd$).
Hence,   $\chi_q^\infty(G)\le \chi_q(G)$, with   equality  if $\cgpsd=\mathcal{CS}_{\infty +}$.
Moreover, as $\mathcal{CS}_{\infty +}\subseteq \cl(\cgpsd)$ (by Theorem \ref{theoclosed}), 
we also have that $\wtilde \chi_q(G)\le \chi_q^\infty(G)$. Hence we have the possible variations of the quantum chromatic number:
\begin{equation}\label{eqchiqs}
\wtilde \chi_q(G)\le \chi_q^\infty(G)\le \chi_q(G),
\end{equation}
with equality throughout if the cone $\cgpsd$ is closed.

 Moreover,  observe that if in the definition of $\chi_q(G)$ we would require that $\rho,\rho_u^i$ are positive compact operators on a Hilbert space $H$ and we rewrite the orthogonality conditions as $\rho^i_u\rho^i_v=0$ (for $\{u,v\}\in E(G), i\in [t]$) and $\rho_u^i\rho_u^j=0$ (for $i\ne j\in [t]$, $u\in V(G)$),  then we would get again the parameter $\chi_q^\infty(G)$.
Indeed,  by the first Hilbert-Schmidt theorem (see e.g. \cite[Thm 6.2.3]{EMT}), the Hilbert space $H$ can be decomposed as $H=\ker \rho \oplus H'$, where $H'$ is the closure of the image of $ \rho$ and admits an orthonormal base $\{f_k: k\in \N\}$ consisting of the eigenvectors of $\rho$. 
Let $\rho',\rho'^i_u$ denote the restrictions of $\rho,\rho^i_u$ to $H'$. Then, $\rho'\ne 0$ and $\rho',\rho'^i_u$ are positive operators on $H'$. Moreover, the operators $\rho'^i_u$  satisfy the same orthogonality conditions as the operators $\rho^i_u$ (since  $\ker \rho\subseteq \ker \rho_u^i$ for all $u,i$, which follows from positivity and the fact that $\rho=\sum_i\rho_u^i$ for all $u$).
Finally,  using the base $\{f_k: k\in \N\}$ of $H'$, the operators  $\rho',\rho'^i_u$ can be identified with matrices in $\mathcal S^\N_+$.

As we already observed in the preceding section, 
we do not know whether we can replace the cone $\cgpsd$ by its closure, for instance in Proposition  \ref{propoptchiq}. 
Denoting by $\mathcal A$ the affine space defined by the affine conditions in program (\ref{optchiq}), 
$\chi_q(G)$ is the minimum value of the objective function taken over $\cgpsd \cap \mathcal A$, which in turn  is equal to the minimum value taken over the closure of $\cgpsd \cap \mathcal A$.
Clearly, $\text{\rm cl}(\cgpsd \cap \mathcal A) \subseteq \mathcal A \cap \text{\rm cl}(\cgpsd)$. However we cannot prove that equality holds. If we could prove equality then this would imply  that equality holds throughout in (\ref{eqchiqs}).

\medskip
We have studied quantum analogues of several classical graph parameters. 
In particular, we have extended the known lower bound $\chi(G)\ge \vartheta^+(\overline G)$ to the quantum setting. We showed that $\chi_q(G)\ge \Theta^{\cgpsd}(G)$ and studied analogous relationships between the other quantum graph parameters and the various theta numbers.
As a step towards further approximations for the quantum chromatic number, we have introduced parameters $\Psi_\epsilon(G)$ defined by replacing the dual cone $\cgpsd^*$ with the convex sets $\Knceps$ in the dual program of $\wtilde \chi_q(G)$, where $\wtilde \chi_q(G) \le \chi_q(G)$.
However, the exact relationship between $\Psi_\epsilon(G)$ and $\wtilde \chi_q(G)$ is unknown and only if Connes' embedding conjecture holds we can claim that  $\wtilde \chi_q(G) \leq \inf_{\epsilon >0} \Psi_{\epsilon}(G)$.
In the recent work~\cite{BLP}, a different approach was investigated, based on  the construction of a  hierarchy of polyhedral cones 
 that covers the interior of the cone $\cgpsd$.
This can be used to show the existence of a linear program expressing the quantum parameter   $\wtilde \chi_q(G)$ (after rounding).
We hope that these results will stimulate further research leading to a better understanding of the quantum graph parameters.

\medskip
We believe that the cone $\cgpsd$ is an intrinsically very interesting cone, whose structure deserves to be better understood. To conclude we mention another   interesting problem about this cone: given a matrix $A \in \cgpsd$, find upper bounds on the smallest dimension $d$ of the matrices forming a Gram representation of $A$. This corresponds to giving an upper bound on the amount of entanglement needed to perform certain protocols \cite{Cleve:2004} and to finding low dimensional factorizations of nonnegative matrices \cite{Fiorini,GPT}, which are currently attracting much attention.

\section*{Acknowledgments}
T.P. was partially funded by the SIQS European project.
We thank three anonymous referees for their useful remarks that helped improve the presentation of the paper.
In particular, we are grateful to one referee for suggesting the result of Lemma \ref{lemthetap} which permitted to correct an error in the proof of Proposition \ref{lem:theta'equiv}.
The authors also  thank Jop Bri\"et, Sabine Burgdorf, Peter Dickinson, Hamza Fawzi and Markus Schweighofer for useful discussions.

\end{document}